\newcommand{\pol}{\mathcal{P}} 
\newcommand{\is}{\texttt{\rm\footnotesize CND}} 
\newcommand{\NN}{\mathbb{N}} 
\newcommand{\RR}{\mathbb{R}} 
\newcommand{\CC}{\mathbb{C}} 
\newcommand{\KK}{\mathbb{K}} 
\newcommand{\lag}{\mathbf{L}}
\newcommand{\tay}{\mathbf{T}}
\newcommand{\pro}{\mathbf{P}}
\newcommand{\kg}{\mathbf{K}}
\DeclareMathOperator{\ntimes}{\otimes_{\mathrm N}}
\newcommand{\hh}{\mathop{\hbox{$\mathcal{H}$}}} 
\newcommand{\gs}{\mathop{\hbox{$\mathcal{E}$}}} 
\newcommand{\cc}{\mathop{\hbox{$\mathcal{C}$}}} 
\newcommand{\vect}[1]{{\text{\rm span}}\left\{#1\right\}} 
\newtheorem{theorem}{Theorem}[section]
\newtheorem{corollary}[theorem]{Corollary}
\newtheorem{lemma}[theorem]{Lemma}
\theoremstyle{remark}
\newtheorem{remark}{Remark}
\newtheorem{problem}{Research problem}
\newtheorem{step}{Step}
\theoremstyle{definition}
\newtheorem{definition}[theorem]{Definition}
\newtheorem{example}[theorem]{Example}
\numberwithin{equation}{section}
\numberwithin{figure}{section}
\begin{document}

\title[The Newton Product of Polynomial Projectors]{The Newton Product of Polynomial Projectors. Part 2 : approximation properties}

\author{François Bertrand}
\address{Institut de Mathématiques de Toulouse,
Université de Toulouse III and CNRS (UMR 5219), 31062, Toulouse Cedex 9, France}
\author{Jean-Paul Calvi}
\address{Institut de Mathématiques de Toulouse,
Université de Toulouse III and CNRS (UMR 5219), 31062, Toulouse Cedex 9, France} 

\date{\today}

\subjclass{41A05, 41A63, 32A05, 32A15}

\keywords{Polynomial projectors, holomorphic functions, entire functions, Kergin interpolation, Hakopian interpolation, Lagrange interpolation}

\begin{abstract}We prove that the Newton product of efficient polynomial projectors is still efficient. Various polynomial approximation theorems are established involving Newton product projectors on spaces of holomorphic functions on a neighborhood of a regular compact set, on spaces of entire functions of given growth and on spaces of differentiable functions. Efficient explicit new projectors are presented.\end{abstract}

\maketitle

\section{Introduction }
In a recent paper \cite{bercal1} we introduced a new way of forming an approximation operator, precisely a polynomial projector, acting on spaces of functions defined on a (subset of a) space of dimension $n$ out of two polynomials projectors on spaces of functions defined on a space of smaller dimension $n_s$, $s=1,2$, $n=n_1+n_2$. Our process was called the \textbf{Newton product} of the (smaller dimensional) projectors.  It is related but considerably different from the classical method based on the tensor product of operators. The construction of the Newton product relies on a suitable graduation, called a \textbf{Newton structure}, of the interpolation conditions that define the projector. For instance, in the simplest case of a (univariate) Lagrange polynomial projector $\lag[a_0,\dots,a_d]$ with respect to the $d+1$ points $a_0, \dots,a_d$, the specification of a (useful) Newton structure is equivalent to the specification of an ordering of the interpolations points. In fact, such a specification is already required in the Newton formula (based on divided differences) for classical Lagrange interpolation and there lies the origin of our terminology. In general, see below, a Newton structure is obtained in a more general way but, as a first approximation, the reader may retain that as far as the projectors are determined by a set of points (for instance, Lagrange, Kergin, Hakopian projectors) a (natural) Newton structure is determined by an ordering of these points. For example, if we stay with the ordering of $A=\{a_0, \dots,a_d\}$ and $B=\{b_0, \dots,b_d\}$ induced by the indexes, 
\begin{equation}\label{eq:bir}
\lag[a_0,\dots,a_d]\ntimes \lag[b_0,\dots,b_d], 
\end{equation}

(where $\ntimes$ indicates the Newton product) is the bivariate Lagrange interpolation projector at the points $(a_i,b_j)$, $i+j\leq d$; this is a set of interpolation points first considered a long time ago by Bierman \cite{bie}. The main goal of the present work is to validate the previous one in answering in the affirmative the obvious question : if the partial projectors are efficient (in a certain sense) approximation operators, will it be the same for their Newton product ? Actually, we tried to write the present paper in order that it could work in the other direction : we relied as little as possible on the previous work in order that the reader interested by the results obtained in this one might decide to go back to \cite{bercal1} for a deeper understanding of the underlying algebraic machinery. The above question is answered positively for projectors on classical spaces of holomorphic functions on a neighborhood of a regular compact set (in the sense of pluripotential theory), on spaces of entire functions of given growth, on spaces of differentiable functions, the results in the latter case being however less precise. The proofs rest on an identical principle that is explained in the next section. 
\par\smallskip
For the convenience of the reader and make the paper self contained, we will conclude this introduction with a definition of the Newton product as well as a few important examples.
\par
The symbol $\oplus$ denotes a direct sum of vector spaces, $\otimes$ is used for the usual tensor product, while the symbol $\ntimes$ is reserved for the Newton product.
\par   
A polynomial projector $\Pi$ of degree $d$ on a space of (real or complex) functions $\gs=\gs(X)$ defined on $X \subset\KK^n$, $\KK =\RR$ or $\KK=\CC $, is a continuous linear map on $\gs$ with values in the $\gs$-subspace $\pol_d(\KK^n)$ of real or complex polynomials of total degree at most $d$, such that $\Pi\circ\Pi=\Pi$. We set 
\[\is(\Pi)=\big\{\nu\in {\gs}' \;:\; \nu(f)=\nu(\Pi(f)),\; \text{all $f\in \gs$}\big\}\]
where ${\gs}'$ is the space of continuous linear forms on $\gs$. We call $\is(\Pi)$ the \textbf{space of interpolation conditions} for $\Pi$. For instance if $\Pi$ is a Lagrange interpolation projector then $\is(\Pi)$ is the space of functionals spanned by the Dirac functionals $\delta_a : f \to f(a)$ when $a$ runs among the interpolation points.  A list of the classical spaces $\gs$ that that will be considered can be found in Table \ref{tab:functionspaces} below. 
\par
A \textbf{Newton structure} for $\Pi$ is a direct sum of subspaces $J_i\subset \is(\Pi)$, that is,  \[\is(\Pi) = J_0 \oplus J_1 \oplus \cdots \oplus J_d,\] such that, for $j=0, \dots, d$, there exists a projector $\Pi_j$ of degree $j$ with 
\[\is (\Pi_j)=J_0 \oplus J_1 \oplus \cdots \oplus J_j.\]
It is shown in \cite{bercal1} that all projectors possess Newton structures.  In fact, to obtain a Newton structure is equivalent to find a basis $(\mu_\alpha\;:\; |\alpha|\leq d)$ where $|\alpha|$ denotes the length of $\alpha\in \NN^n$ such that for $j=0, \dots, 
d$, the sub-list $(\mu_\alpha\;:\; |\alpha|\leq j)$ is linearly independent on 
$\pol_j(\CC^n)$ and the link with the previous definition is given by 
\[\is(\Pi_j)=  \vect{\mu_\alpha\;:\; |\alpha|\leq j}\quad\text{or}\quad J_j=\vect{\mu_\alpha\;:\; |\alpha|= j}, \quad j=0,\dots, d.\]          
A polynomial projector together with a specific Newton structure is called a \textbf{Newton-structured projector}. It is important not to confuse a polynomial projector with the richer notion of Newton-structured polynomial projector. To distinguish a mere projector from a Newton structured projector, we use the notation \[[\Pi]=(\Pi_0, \dots, \Pi_d)\] to denote the latter. 
\par\smallskip
Now, given two Newton structured projectors $[\Pi^i]$, $i=1,2$, on $\gs(X_i)$ with respective Newton structure corresponding to the basis $(\mu^i_\alpha\;:\; |\alpha|\leq d)$, it is shown in \cite{bercal1} that there exists a unique projector $\Pi$ on $\gs(X_1\times X_2)$ such that 
\[\is(\Pi) =\vect{\mu^1_\alpha \otimes \mu^2_\beta \;:\; |\alpha|+|\beta|=0, \dots, d}.\]
This projector $\Pi$ is called the \textbf{Newton product} of $[\Pi^1]$ and $[\Pi^2]$ and is denoted by $[\Pi^1]\ntimes [\Pi^2]$.
\par
For the sake of notational simplicity, we will sometimes (abusively) write
$\Pi^1\ntimes \Pi^2$ instead of $[\Pi^1]\ntimes [\Pi^2]$, especially when
the Newton structure we use is clear in the context but, in any case, it must
be remembered that the use of the symbol $\ntimes$ (for the Newton product) 
implies that Newton structures have been chosen. The main algebraic formula
regarding the computation of a Newton product on a product function is recalled
in Theorem \ref{th:fbsnppppart2} below.
\par\smallskip 
We will now present several examples of classical or new projectors that are Newton products. In the table below, the Newton structures used appears through the basis $\mu^i_\alpha$ indicated. We omit the mention of the natural spaces $\gs$ involved. Some of these examples are considered in more details further in this work. 

\begin{enumerate}
\item If, for $i=1,2$, $[\Pi^i]$ is the \textbf{Taylor projector} at $a^i\in \KK^{n_i}$ to the order $d$ Newton structured by \[\mu^i_\alpha(f) =D^\alpha f(a^i),\] then $\Pi^1\ntimes\Pi^2$ is the \textbf{Taylor projector} at $(a,b)\in \KK^{n}$, $n=n_1+n_2$, to the order $d$ with
 \[\mu^1_{\alpha} \otimes \mu^2_{\beta}(f) =D^{(\alpha, \beta )} f(a), \quad a=(a^1,a^2).\]
\item\label{exe:projmaesure} Assume that, for $i=1,2$, $[\Pi^i]$ is the $L^2(\mathbf{dm}_i)$-\textbf{orthogonal projection} on $\pol_d(\CC^{n_i})$ Newton structured by 
\[\mu^i_\alpha(f) =  \int f\overline{b}(\alpha,\mathbf{dm}_i, \cdot)\mathbf{dm}_i,\]
where $\mathbf{dm}_i$ is a (sufficiently dense) positive Borel measure supported on a compact set $K_i\subset \CC^{n_i}$ and $b(\alpha, \mathbf{dm}_i , \cdot)$ denotes the usual orthonormal basis (with leading monomial $z^\alpha$) in $L^2(\mathbf{dm}_i)$ obtained by the Gram-Schmidt algorithm from the standard monomials basis ordered with the graded lexicographic order. Then $\Pi^1\ntimes\Pi^2$ is the $L^2(\mathbf{dm}_1\times \mathbf{dm}_2 )$-\textbf{orthogonal projection} on $\pol_d(\CC^{n})$, $n=n_1+n_2$, with
\[\mu^1_\alpha \otimes \mu^2_\beta(f) = \\ \iint f\overline{b}\big((\alpha,\beta), \mathbf{dm}_1\otimes \mathbf{dm}_2, \cdot\big)\mathbf{dm}_1\otimes \mathbf{dm}_2.\]
\item Assume that, for $i=1,2$, $[\Pi^i]$ is the \textbf{Lagrange interpolation projector} $\lag[A^i]$ in $\pol_d(\CC^{n_1})$ at the (unisolvent) set of interpolation points $A^i=\{a^i_\alpha\;:\;|\alpha|\leq d\}$ structured by
\[\mu^i_\alpha(f) =f(a^i_\alpha),\] 
where the interpolation points $a^i_\alpha$ are ordered in such a way that the sets $A^i_j=\{a^i_\alpha\;:\; |\alpha|\leq j\}$ are unisolvent for Lagrange interpolation in $\pol_j(\CC^{n_i})$, $j=0,\dots,d$. Then $[\Pi^1]\ntimes_N[\Pi^2]$ is the \textbf{Lagrange interpolation projector} in $\pol_d(\CC^{n})$, $n=n_1+n_2$, at the (unisolvent) set of interpolation points 
\[A=\left\{\big(a^1_\alpha,a^2_\beta\big)\;:\; |\alpha|+|\beta|\leq d\right\}\subset \CC^{n}.\] 
We refer to \cite{jpcaicm} for details. 
\item Assume that $[\Pi^1]$ is is the \textbf{Lagrange interpolation projector} $\lag[A^1]$ structured as above and $[\Pi^2]$ is the \textbf{Kergin interpolation projector} $\kg[A^2]$ in $\pol_d(\CC)$ with $A^2=\{z_0, \dots,z_d\}\subset \CC^{n_2}$ structured by, see \cite{bercal1} for details,
\[\mu^2_\alpha(f) =\int_{S_{|\alpha|}}D^\alpha f \big(z_0+ \sum_{i=1}^{|\alpha|}t_i(z_i-z_0)\big) dt,\]
where $S_k=\{(t_1,\dots,t_k)\in [0,1]^k \;:\; \sum_{i=1}^k t_i \leq 1\}$ denotes the ordinary simplex in $\RR^{k}$ (and $dt$ the ordinary Lebesgue measure on it) then 
\[[\Pi^1]\ntimes [\Pi^2]= \lag[A^1]\ntimes \kg[A^2]\]
is the projector the space of interpolation conditions is spanned by the functionals
\[f \to \int_{S_{|\beta|}}D^{(0,\beta)} f \big(a^1_\alpha, z_0+ \sum_{i=1}^{|\beta|}t_i(z_i-z_0)\big) dt, \quad |\alpha|+|\beta|\leq d,\]
where $D^{(0,\beta)}$ indicates derivation with respect to the last $n_2$ complex variables. 
As will be shown in the sequel, such a mixed Newton product seems to be particularly useful in the case $n_1=1$ and $n_2=2$. 

\end{enumerate}

\section{Stating the problem}
\subsection{Newton sequences} For each $d\in \NN$, we let $\Pi_d$ denote a polynomial projector of degree $d$ on a space of (real or complex) functions $\gs$. The sequence $\mathcal{N}=(\Pi_0,\Pi_2, \dots)$ will be called a \textbf{Newton sequence} on $\gs$ if : 
\begin{align} \label{eq:newstruwithN0}
&\text{For all $d\in \NN$, $\is(\Pi_d) \subset \is(\Pi_{d+1})$,} 
\\ 
\intertext{or, equivalently,}\label{eq:newstruwithN}
&\text{$(\Pi_0,\dots, \Pi_d)$ defines a Newton structure for $\Pi_d$. }
\end{align}
\par 
We shall denote the Newton-structured projector in \eqref{eq:newstruwithN}
by $[\mathcal{N}]_d$. 
\par 
Given such a Newton sequence $\mathcal{N}$, we set $\pi_0=\Pi_0$ and, for $k\geq 1$, 
$\pi_k=\Pi_{k}-\Pi_{k-1}$. Then $\pi_k$ is the $k$-th \textbf{Newton summand} 
for $[\mathcal{N}]_d$ and this for any $d\geq k$, so that 
\begin{equation}\label{eq:withnewsum} \Pi_d =\sum_{k=0}^d \pi_k, \quad  d\in \NN.
\end{equation}
Likewise, there exists a sequence of spaces $J_i$, $i\in \NN$, such that
\begin{equation}\label{eq:withspacesum}\is (\Pi_d)=\bigoplus_{0\leq i \leq d} J_i, \quad d\in \NN,\end{equation}

\begin{example} We use the projectors presented in the introduction.
 \par\smallskip
\begin{asparaenum}[(A)]
\item The sequence for which $\Pi_d$ is the \textbf{Taylor projector} at a point $a$ to the order $d$, $\Pi_d=\tay_a^d$, is a basic example of Newton sequence for which $J_i=\text{span}\{ f \to D^{|\alpha|}f(a)\;:\; |\alpha|=i\}$. 
 \par\smallskip
\item Another fundamental example is furnished by the \textbf{orthogonal projectors} $\pro_{d, \mathbf{dm}}$ with respect to a sufficiently dense measure $\mathbf{dm}$ where $J_i=\text{span}\{ f \to \langle f \,,\, b (\alpha, \mathbf{dm}, \cdot)\rangle\;:\; |\alpha|=i\}$, see Example \ref{exe:projmaesure} in the introduction. 
 \par\smallskip
\item A Newton sequence of \textbf{Lagrange interpolation projectors} will be obtained with $\Pi_d=\lag[A_d]$ with the condition that $A_d\subset A_{d+1}$ for $d\in \NN$ where $A_d$ is the (unisolvent) set of interpolation points for $\lag[A_d]$, for which $J_i=\text{span}\{ f \to f(a)\;:\; a\in A_i\setminus A_{i-1}\}$, $i\geq 1$. 
 \par\smallskip
\item Likewise a Newton sequence of \textbf{Kergin interpolation projectors} will be obtained with $\Pi_d=\kg[A_d]$ with the same condition that the set of interpolation points are nested, that is $A_d\subset A_{d+1}$ for $d\in \NN$. Typically, starting from a sequence $a_d$ of points, we will take $A_d=\{a_0,\dots,a_d\}$ so that $\Pi_d=\kg[a_0,\dots,a_d]$, $d\in \NN$, see the introduction for a description of the spaces $J_i$ and \cite{bercal1} for further details. 
\end{asparaenum}
\end{example}
\begin{definition}\label{def:convergingstuff} Let $\mathcal{F}$ be a (topological vector) space of functions containing $\gs$ (hence also the polynomials) such that the injection $f\in \gs \to f\in \mathcal{F}$ is continuous. Note that $\Pi_d$ is continuous as a linear map from $\gs$ to $\mathcal{F}$. 
We say that a \textbf{Newton sequence} $\mathcal{N}=(\Pi_0,\Pi_1, \dots)$ on $\gs$ is \textbf{$\mathcal{F}$-converging} when, for all $f\in \gs$, $\Pi_d(f)$ converges to $f$ in $\mathcal{F}$ as $d\to \infty$. When precision is needed we say that $\mathcal{N}$ is \textbf{$\mathcal{F}$-converging} on $\gs$. When $\mathcal{F}=\gs$ we just say that
$\mathcal{N}$ is \textbf{converging} on $\gs$. 
If convergence holds only for all functions in a subspace $E$ of $\gs$, we say that $\mathcal{N}$ is \textbf{$\mathcal{F}$-converging} on $E\subset \gs$.
\end{definition}  
Table \ref{tab:functionspaces} collects the various spaces $\gs$, $E$ and $\mathcal{F}$ that will be used in the sequel. 
\begin{table}[h]
\begin{tabular}{p{0.6\textwidth}p{0.3\textwidth}} \toprule Space $\gs$ or space $E$ & 
Space $\mathcal{F}$\\ \midrule 
$\hh(K)$, the space of holomorphic function on a neighborhood of (a regular polynomially convex compact set) $K$. & $\cc(K)$ the space of continuous functions on  $K$, then $\hh(K)$. \\ \midrule
$\hh(\CC^n)$, the space of entire functions and $E$ a subspace of entire
functions of given growth. & $\hh(\CC^n)$. \\ \midrule
${\cc}^m(K)$ the standard space of $m$ times continuously differentiable functions on the interior of $K$ (a fat compact set, see Section \ref{sec:smooth}, whose all derivatives of order $\leq m$ extend continuously to $K$. & $\cc(K)$.\\ \bottomrule \\
\end{tabular}
\caption{Function spaces, all endowed with their usual topology.}
\label{tab:functionspaces}
\end{table}
\par\smallskip
Classical results in approximation theory are naturally expressed in the above terminology. For instance, the Newton sequence the $d$-th element of which is the Taylor projector at the origin $0$ to the order $d$ is converging on the space $\hh(\{0\})$ formed of holomorphic functions on a neighborhood of the origin (endowed with its usual limit inductive topology).  
Classical univariate Lagrange interpolation theory is concerned with the search for conditions on a given sequence $(a_0, a_1, a_2, \dots)$ ensuring that $(\Pi_0,\Pi_2, \dots)$ with $\Pi_d=\lag[a_0,\dots,a_d]$ is $\mathcal{F}$-converging on $\gs$ where $\gs$ is a classical space of smooth functions on an interval $[a,b]$, and $\mathcal{F}=\cc([a,b])$, or
$\mathcal{F}=\hh(K)$, see \cite{walsh, smirnov, gaier}, or $E\subset \gs=\mathcal{F}=\hh(\CC)$ is a space of entire functions of given growth, see \cite{guelfond}.
\subsection{The Newton product of Newton sequences}
By using the Newton product of polynomial projectors, we can easily construct a natural Newton sequence on $\gs (X_1\times X_2)$ when we are given two Newton sequences $\mathcal{N}^1=(\Pi^1_0,\Pi^1_1, \dots)$ on $\gs(X_1)$ and $\mathcal{N}^2=(\Pi^2_0,\Pi^1_2, \dots)$ on $\gs(X_2)$. Here, by writing $\gs(X_s)$ we emphasize that the functions in $\gs$ are defined on $X_s$. The connection between $\gs(X_s)$ and $\gs(X_1\times X_2)$ will be obvious in all cases considered. 
\par 
For all $d\in \NN$, we compute the Newton product, see \eqref{eq:newstruwithN},
\begin{equation}\label{eq:defpronewseq}%
\Pi_d:= [\mathcal{N}^1]_d \ntimes [\mathcal{N}^2]_d = (\Pi^1_0,\dots, \Pi^1_d)\ntimes (\Pi^2_0,\dots, \Pi^2_d).
\end{equation}
In fact if $(J^s_i)$, $s=1,2$ is the sequence of spaces associated to $\mathcal{N}^s$ as in \eqref{eq:withspacesum}, according to \cite[Corollary 4.8]{bercal1}, we have 
\begin{equation}\label{eq:isforprod} \is(\Pi_d)= \bigoplus_{i+j\leq d} J^1_i\otimes J^2_j \end{equation}
which the reader may read as a characterization property of $\Pi_d$.
\par
The above construction provides a sequence of polynomials projectors and this sequence is actually a Newton sequence itself . 
\begin{lemma} The sequence $(\Pi_0, \Pi_1, \dots)$ where $\Pi_d$ is defined as in \eqref{eq:defpronewseq} is a Newton sequence on $\gs(X_1\times X_2)$.  
\end{lemma}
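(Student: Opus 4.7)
The plan is to verify directly the defining inclusion \eqref{eq:newstruwithN0} of a Newton sequence, namely $\is(\Pi_d) \subset \is(\Pi_{d+1})$ for every $d \in \NN$. Because both $\mathcal{N}^1$ and $\mathcal{N}^2$ are Newton sequences, we may associate to each of them, via \eqref{eq:withspacesum}, a graded family of subspaces $J^s_i$, $s=1,2$, satisfying $\is(\Pi^s_d) = \bigoplus_{0 \leq i \leq d} J^s_i$. Invoking the characterization \eqref{eq:isforprod}, which is \cite[Corollary 4.8]{bercal1}, the space of interpolation conditions of the Newton product reads
\[
\is(\Pi_d) = \bigoplus_{i+j \leq d} J^1_i \otimes J^2_j.
\]
Replacing $d$ by $d+1$ amounts to enlarging the index set to $\{(i,j) \in \NN^2 : i+j \leq d+1\}$, which strictly contains $\{(i,j) : i+j \leq d\}$; the desired inclusion $\is(\Pi_d) \subset \is(\Pi_{d+1})$ follows at once.

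The fact that $\Pi_d$ is itself a polynomial projector of degree $d$ on $\gs(X_1 \times X_2)$ is part of the very definition of the Newton product recalled in the introduction, so nothing further has to be checked on that side. As a by-product of the argument, the graded pieces of the Newton structure that $(\Pi_0, \dots, \Pi_d)$ defines on $\Pi_d$ are
\[
J_k = \bigoplus_{i+j = k} J^1_i \otimes J^2_j, \quad k = 0, \dots, d,
\]
which is the natural ``convolution'' of the two graduations and will be useful in the sequel. There is no genuine obstacle here: the lemma is essentially a bookkeeping consequence of \eqref{eq:isforprod}, its role being to legitimize speaking of the Newton product of two Newton sequences as a new Newton sequence and thus to allow the convergence analysis of the following sections to be phrased at the level of sequences rather than individual projectors.
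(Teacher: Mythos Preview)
Your proof is correct and follows essentially the same approach as the paper: both arguments verify the inclusion $\is(\Pi_d)\subset\is(\Pi_{d+1})$ by invoking the graded decompositions $\is(\Pi^s_k)=\bigoplus_{i\leq k}J^s_i$ together with the identity \eqref{eq:isforprod}, from which the inclusion is immediate since $\{i+j\leq d\}\subset\{i+j\leq d+1\}$. Your additional remarks on the induced grading $J_k=\bigoplus_{i+j=k}J^1_i\otimes J^2_j$ are correct and harmless, though not present in the paper's proof.
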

\begin{proof} According to \eqref{eq:newstruwithN0}, we need to prove that $\is(\Pi_d) \subset \is(\Pi_{d+1})$ for all $d\in \NN$.  
Since $\mathcal{N}^s$ is a Newton sequence, there exists a sequence of spaces $J_i^s$, $i\in \NN$, see \eqref{eq:withspacesum}, such that
\[\is (\Pi_k^s)=\bigoplus_{0\leq i \leq k} J^s_i, \quad d\in \NN, \quad s=1,2.\] In view of \eqref{eq:defpronewseq} and using
\eqref{eq:isforprod}  for both equalities, we have 
 \[\is(\Pi_d)= \bigoplus_{i+j\leq d}  J^1_i\otimes J^2_j \subset \bigoplus_{i+j\leq d+1} 
 J^1_i\otimes J^2_j = \is(\Pi_{d+1}). \qedhere
\] \end{proof}
We are now in position to fix the terminology that will be used in this paper. 
\begin{definition} The above Newton sequence will be denoted by $\mathcal{N}=\mathcal{N}^1\ntimes \mathcal{N}^2$ and called the \textbf{Newton product} of $\mathcal{N}^1$ by $\mathcal{N}^2$. In accordance with the terminology introduced for the Newton product, $\mathcal{N}^1$ (resp. $\mathcal{N}^2$) will be called the left (resp. right) \textbf{divisor} of $\mathcal{N}$.
\end{definition} 
 Now, the obvious approximation problem is as follows. 
\begin{problem}\label{prob:basic} Suppose that $\mathcal{N}^s$ is a $\mathcal{F}_s$-converging Newton sequence on $\gs(X_s)$, $s=1,2$, is $\mathcal{N}^1\ntimes \mathcal{N}^2$ a $\mathcal{F}$-converging sequence on $\gs(X_1\times X_2)$ where $\mathcal{F}$ naturally depends on $\mathcal{F}_s$, $s=1,2$ ? 
\end{problem}
We will show that the answer is generally positive, thus showing that the Newton product leads to the construction of new effective approximation projectors. Various explicit examples will be given in the sequel. 
\subsection{Strategy of proof}\label{sec:strategy}
We describe the simple general strategy that we follow in answering Problem \ref{prob:basic} for different spaces. We felt it preferable to provide distinct proofs in the three main cases that we study rather than to search for a very general statement which would require assumptions whose verifications would require essentially the same amount of work. 
\par 
\smallskip
Assume that we work with $\gs(X_1)$, $\gs(X_2)$ and $\gs(X_1\times X_2)$ with $X_s\subset \RR^{n_s}$ or $\CC^{n_s}$. We use the notation introduced in the previous subsection, in particular $\Pi_d$ is defined as in \eqref{eq:defpronewseq}.
\begin{step} We find product polynomials 
\begin{equation}\label{eq:defpalphabeta} p_{\alpha,\beta}(z^1,z^2)=p^1_{\alpha}(z^1)p^2_{\beta}(z^2)\quad\text{with $\deg p^1_{\alpha} =|\alpha|$ and $\deg p^2_{\beta} =|\beta|$},\end{equation} such that, for all $f\in \gs(X_1\times X_2)$, we have 
\begin{equation}\label{eq:strat1} f=\sum_{j=0}^\infty \sum_{|\alpha|+|\beta|=j} c_{\alpha\beta}(f) p_{\alpha,\beta}\end{equation}
where  the $c_\alpha$ are functionals on $\gs(X_1\times X_2)$ and convergence holds  in $\gs(X_1\times X_2)$ (hence	also in $\mathcal{F}$).
\end{step}
\begin{step} Since $\Pi_d$ is a projector of degree $d$, it coincides with the identity on the polynomials of degree $\leq d$ so that we have 
\[\Pi_d\left(\sum_{j=0}^d \sum_{|\alpha|+|\beta|=j} c_{\alpha\beta}(f) p_{\alpha,\beta}\right) = 
\sum_{j=0}^d \sum_{|\alpha|+|\beta|=j} c_{\alpha\beta}(f) p_{\alpha,\beta}\]
and, consequently, it follows from \eqref{eq:strat1}, together with the continuity of $\Pi_d : \gs(X_1\times X_2)\to \mathcal{F}$, that 
\begin{align}\label{eq:strat2}
f-\Pi_d(f)&= \sum_{j=d+1}^\infty \sum_{|\alpha|+|\beta|=j} c_{\alpha\beta}(f) \big\{p_{\alpha,\beta}-\Pi_d(p_{\alpha,\beta})\big\} \\
\label{eq:strat3}
&=\sum_{j=d+1}^\infty \sum_{|\alpha|+|\beta|=j} c_{\alpha\beta}(f) p_{\alpha,\beta} - \sum_{|\alpha|+|\beta|=d+1}^\infty c_{\alpha\beta}(f) \Pi_d(p_{\alpha,\beta}), 
\end{align}
where convergence holds in $\mathcal{F}$. 
Observe that, since the first series tends to $0$ in $\mathcal{F}$ as $d\to \infty$, we might restrict ourselves to show that $\sum_{j=d+1}^\infty \sum_{|\alpha|+|\beta|=j} c_{\alpha\beta}(f) \Pi_d(p_{\alpha,\beta})$ goes to $0$ as $d$ tends to $\infty$ in \eqref{eq:strat3}. It is however equally simple and somewhat more elegant to work with the right hand side of \eqref{eq:strat2}.
\par
\smallskip
At this point, we need an algebraic formula established in \cite{bercal1} for the computation of a Newton product projector applied to a product function  : 
\begin{theorem}\label{th:fbsnppppart2} Let, for $i=1,2$, $[\Pi^i]=(\Pi^i_0,\dots,\Pi^i_d)$ be a Newton structured polynomial projector of degree $d$ on $\gs(X_i)$, $X_i\in \KK^{n_i}$. If $f_i\in \gs(X_i)$ we denote by $f_1f_2$ the product function defined by $(f_1f_2)(x^1,x^2)= f_1(x^1)f_2(x^2)$ then we have 
\begin{equation}\label{eq:sbf}
\Pi (f_1f_2)=\sum_{(i,j)\in\NN_d(2)}\pi_{i}^1(f_1) \pi_{j}^2(f_2), \quad f_i\in \gs(X^i), 
\end{equation} where the $\pi^s_i$ denote the Newton summands corresponding to $\Pi^s$, see \eqref{eq:withnewsum}.
\end{theorem}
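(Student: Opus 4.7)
The plan is to characterize $\Pi(f_1 f_2)$ as the unique polynomial of degree $\leq d$ whose value under each interpolation condition of $\Pi$ agrees with that of $f_1 f_2$, and then verify that the candidate $g := \sum_{(i,j) \in \NN_d(2)} \pi_i^1(f_1)\pi_j^2(f_2)$ satisfies this characterization. By \eqref{eq:isforprod}, a basis of $\is(\Pi)$ is given by the tensor products $\mu^1_\alpha \otimes \mu^2_\beta$ with $|\alpha| + |\beta| \leq d$, where $\mu^1_\alpha \in J^1_{|\alpha|}$ and $\mu^2_\beta \in J^2_{|\beta|}$, so it suffices to check the interpolation condition on these basis functionals.

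First I would dispose of the degree bound: since $\Pi^s_i$ has degree $i$, each Newton summand $\pi^s_i(f_s)$ is a polynomial of degree $\leq i$, hence the product $\pi^1_i(f_1)\pi^2_j(f_2)$ has total degree $\leq i+j \leq d$, and $g \in \pol_d(\KK^n)$. The core of the proof is then the computation of $\nu(g)$ for $\nu = \mu^1 \otimes \mu^2$ with $\mu^1 \in J^1_a$, $\mu^2 \in J^2_b$, and $a+b \leq d$. Since $(\mu^1 \otimes \mu^2)(f_1 f_2) = \mu^1(f_1)\mu^2(f_2)$ on product functions, the goal is to prove
\[
\sum_{i+j \leq d} \mu^1\!\big(\pi^1_i(f_1)\big)\, \mu^2\!\big(\pi^2_j(f_2)\big) \;=\; \mu^1(f_1)\, \mu^2(f_2).
\]
The key mechanism is that the Newton structure forces two different behaviours for the functionals. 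On the one hand, for any $\mu^s \in J^s_c \subset \is(\Pi^s_c)$ and any $k \geq c$, we have $\mu^s(\Pi^s_k(f_s)) = \mu^s(f_s)$, so by telescoping $\sum_{i=0}^{k} \mu^s(\pi^s_i(f_s)) = \mu^s(\Pi^s_k(f_s)) = \mu^s(f_s)$. On the other hand, for $k > c$ the same identity at $k$ and $k-1$ yields $\mu^s(\pi^s_k(f_s)) = 0$.

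I would then carry out the computation by summing over $i$ first: fix $j$ and observe that the inner sum telescopes to $\mu^1(\Pi^1_{d-j}(f_1))$. Splitting the outer sum at $j = b$, the terms with $j > b$ vanish by the second mechanism applied to $\mu^2$, while for $j \leq b$ one has $d-j \geq d - b \geq a$, so the first mechanism applied to $\mu^1$ gives $\mu^1(\Pi^1_{d-j}(f_1)) = \mu^1(f_1)$. Factoring this constant out and telescoping the remaining sum in $j$ up to $b$ yields $\mu^1(f_1)\,\mu^2(\Pi^2_b(f_2)) = \mu^1(f_1)\mu^2(f_2)$, as desired. I expect the main obstacle to be precisely this asymmetric splitting: the constraint $i + j \leq d$ couples the two indices, so neither a naive term-by-term vanishing argument nor a pure telescoping in a single variable works — one has to combine the two, and carefully exploit the inequality $a + b \leq d$ to guarantee $d - j \geq a$ in the range where $\mu^2(\pi^2_j(f_2))$ need not vanish.
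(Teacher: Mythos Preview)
Your argument is correct. The paper itself does not prove this theorem here: its proof is a one-line reference to \cite[Theorem 4.5 (2)]{bercal1}. What you have written is a self-contained verification that the candidate polynomial $g$ lies in $\pol_d(\KK^n)$ and matches $f_1f_2$ on a basis of $\is(\Pi)$, which is exactly the characterizing property recorded in \eqref{eq:isforprod}; the telescoping and vanishing mechanisms you identify are precisely what the Newton structure supplies, and your handling of the coupled constraint $i+j\le d$ via the asymmetric splitting at $j=b$ is the right maneuver. So there is nothing to compare against in this paper --- your proposal simply fills in what the authors delegated to their earlier work, and does so cleanly.
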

\begin{proof} See \cite[Theorem 4.5 (2)]{bercal1}. 
\end{proof}
We may now state the key computational lemma.
\end{step}
\begin{lemma}\label{th:lemmabd} We use the notation above, see in particular \eqref{eq:defpalphabeta}.
Assume that $|\alpha|+|\beta|\geq d+1$ and
let 
\begin{equation}\label{eq:defBd}
B(d,\alpha, \beta) = \{(i_1, i_2) \in \NN^2 \;:\; d + 1\leq  i_1 + i_2;\;  i_1\leq |\alpha|;\; i_2\leq |\beta| \}.
\end{equation}
We have
\[p_{\alpha,\beta}-\Pi_d(p_{\alpha,\beta})= \sum_{(i_1,i_2)\in B(d,\alpha,\beta)} \pi^1_{i_1}(p^1_\alpha)\pi^2_{i_2}(p^2_\beta).\]
\end{lemma}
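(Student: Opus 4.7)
The plan is to combine two elementary facts. First, a direct application of Theorem \ref{th:fbsnppppart2} to the product polynomial $p_{\alpha,\beta}=p^1_\alpha\cdot p^2_\beta$ yields
\[
\Pi_d(p_{\alpha,\beta}) \;=\; \sum_{\substack{i_1,i_2\geq 0 \\ i_1+i_2\leq d}} \pi^1_{i_1}(p^1_\alpha)\,\pi^2_{i_2}(p^2_\beta).
\]
Second, since $p^1_\alpha$ has degree at most $|\alpha|$, every projector $\Pi^1_i$ with $i\geq |\alpha|$ fixes it; telescoping the definition $\pi^1_i=\Pi^1_i-\Pi^1_{i-1}$ (with the convention $\pi^1_0=\Pi^1_0$) therefore gives
\[
p^1_\alpha \;=\; \Pi^1_{|\alpha|}(p^1_\alpha) \;=\; \sum_{i_1=0}^{|\alpha|}\pi^1_{i_1}(p^1_\alpha), \qquad \pi^1_{i_1}(p^1_\alpha)=0\ \text{for}\ i_1>|\alpha|,
\]
and symmetrically for $p^2_\beta$.

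Multiplying these two finite expansions yields
\[
p_{\alpha,\beta} \;=\; \sum_{\substack{0\leq i_1\leq |\alpha| \\ 0\leq i_2\leq |\beta|}} \pi^1_{i_1}(p^1_\alpha)\,\pi^2_{i_2}(p^2_\beta),
\]
so that subtracting the formula for $\Pi_d(p_{\alpha,\beta})$ simply removes the pairs $(i_1,i_2)$ that lie simultaneously in $\{i_1+i_2\leq d\}$ and in $\{0\leq i_1\leq |\alpha|,\ 0\leq i_2\leq |\beta|\}$. The terms appearing in $\Pi_d(p_{\alpha,\beta})$ but violating $i_1\leq |\alpha|$ or $i_2\leq |\beta|$ contribute nothing, because one of the factors $\pi^s_{i_s}(p^s_\cdot)$ is already zero by the vanishing observation above, so the cancellation is clean.

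What survives is exactly the index set $\{(i_1,i_2)\in\NN^2 : i_1+i_2\geq d+1,\ i_1\leq |\alpha|,\ i_2\leq |\beta|\}$, which is $B(d,\alpha,\beta)$ as defined in \eqref{eq:defBd}, proving the claim. I do not anticipate any real obstacle here: the only point requiring a moment's attention is making the bookkeeping of the cancellation explicit, and in particular verifying that the ``rogue'' terms with $i_1>|\alpha|$ or $i_2>|\beta|$ in the expansion of $\Pi_d(p_{\alpha,\beta})$ do not spoil the argument, which they do not thanks to the degree truncation of the Newton summands on polynomials.
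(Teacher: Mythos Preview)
Your proposal is correct and follows essentially the same approach as the paper: expand $p_{\alpha,\beta}$ via the Newton summands of the factors, apply the product formula (Theorem~\ref{th:fbsnppppart2}) for $\Pi_d(p_{\alpha,\beta})$, and subtract. Your treatment is in fact slightly more explicit than the paper's, which simply says ``the relation follows immediately on subtracting'' without spelling out why the terms in $\Pi_d(p_{\alpha,\beta})$ with $i_1>|\alpha|$ or $i_2>|\beta|$ cause no trouble; you correctly identify and dispose of these via $\pi^s_{i_s}(p^s_\cdot)=0$ in that range.
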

\begin{proof}
Let $d_1=|\alpha|$ and $d_2=|\beta|$. Since $\Pi_{d_s}$ is a projector of degree $d_s$ we have
\[p_{\alpha,\beta}=p^1_\alpha p^2_\beta= \Pi_{d_1}(p^1_\alpha)\Pi_{d_2} (p^2_\beta),\]
and, in view of \eqref{eq:withnewsum},
\begin{align} \label{eq:theplus}p_{\alpha,\beta} = \left(\sum_{i_1\leq d_1}\pi_{i_1}(p^1_\alpha)\right)\cdot 
\left(\sum_{i_2\leq d_2}\pi_{i_2}(p^2_\beta)\right)
= \sum_{i_1\leq d_1, i_2\leq d_2} \pi_{i_1}(p^1_\alpha) \pi_{i_2}(p^2_\beta). 
\end{align}
On the other hand, since $p_{\alpha,\beta}$ is a product function, the product formula 
in Theorem \ref{th:fbsnppppart2} yields 
\begin{equation}\label{eq:theminus}
\Pi_d(p_{\alpha,\beta})=\sum_{i_1+i_2\leq d} \pi^1_{i_1}(p^1_\alpha)\pi^2_{i_2}(p^2_\beta). 
\end{equation}
The relation follows immediately on subtracting \eqref{eq:theminus} from \eqref{eq:theplus}. 
\end{proof}
\begin{step} The properties of the factors $\mathcal{N}^s$ enter into play when estimating
 $p_{\alpha,\beta}-\Pi_d(p_{\alpha,\beta})$ with the help of the formula given in the previous lemma. 
 To estimate the terms in the sum, we invoke the assumption that the divisor sequences $\mathcal{N}^s$, $s=1,2$, are converging via the use of a uniform boundedness principle (Banach-Steinhaus theorem), \cite[Chapter 2]{rudin}. It is therefore essential to work on spaces where the principle holds. 
\end{step}
%
%
\section{Spaces of holomorphic functions on a neighborhood of a regular compact set}\label{sec:holomorphic}

\subsection{The space $\hh(K)$} The space $\hh(K)$ formed of all functions holomorphic on a neighborhood of the compact set $K$ is endowed with the usual inductive limit of the spaces $H(\Omega)$ or, equivalently, $A(\Omega)$ where $\Omega$ is an open (bounded) neighborhood of $K$ and $H(\Omega)$ denotes the space of holomorphic functions on $\Omega$ with the topology of uniform convergence on compact subsets, while $A(\Omega)$ is the \emph{Banach space} of the functions continuous on $\overline{\Omega}$ and holomorphic on $\Omega$ with the usual sup-norm on $\overline{\Omega}$. Thus, for instance, a sequence
$f_n$ converge to $f$ in $\hh(K)$ if and only if there exists an open bounded neighborhood $\Omega$ of $K$ such that $f$ and $f_n$ are in $A(\Omega)$ for
 all $n$ and $f_n$ converges uniformly to $f$ on $\overline{\Omega}$. A linear map from $\hh(K)$ onto another topological vector space $\mathcal{F}$ 
is continuous if and only if all its restrictions to the spaces $A(\Omega)$, $\Omega \supset K$ are continuous. 
\subsection{Some tools from pluripotential theory}
We recall some basic facts from pluripotential complex theory which are required in the sequel.  A good general reference is the book by Klimek \cite{klimek}. The survey by Levenberg \cite{levenberg} also contains the required material (and much more) and is more approximation-theory oriented. The non-specialist reader may freely assume that the compact sets $K$ we work with are convex in the ordinary geometrical sense. This would imply a limited loss of generality only, at least from a practical point of view. 
\par 
Let $K$ be a regular polynomially convex compact set in $\CC^n$. Its continuous pluri-subharmonic Green-Siciak extremal function is denoted by $V_K$. Recall that this function can be defined for instance as 
\[V_K(z) = \max\left(0 ,\sup\Big\{ \frac{1}{\deg p} \ln |p(z)|\; :\; p\in \pol(\CC^n),\; \|p\|_K \leq 1\Big\}\right).\]
For $R>1$, we define the bounded open set $K_R=\{z\in \CC^n \;:\; V_K(z) < \ln R\}$. 
\par A first use of these level sets appears in the Bernstein-Walsh-Siciak inequality \cite{klimek} which states that
\begin{equation}\label{eq:BWSinequality} \|p\|_{\overline{K_R}}\leq R^{\deg p}\|p\|_K, \quad p\in \pol_d(\CC^n), \quad R>1.
\end{equation} 
The level sets $K_R$ are  further related to the rate of polynomial approximation of a holomorphic function on $K$. Namely, if 
\[\text{dist}(f, \pol_d(\CC^n))=\inf\{\|f-p\|_K\;:\; p\in \pol_d(\CC^n)\},\]
then $f$ is holomorphic (or extends to a holomorphic function) on $K_R$  if and only if
\[\limsup_{d\to\infty} \text{dist}\Big(f, \pol_d(\CC^n)\Big) ^{1/d} < 1/R.\]
In particular,  setting
\begin{equation}\label{eq:defrhof} 
\limsup_{d\to\infty} \text{dist}\Big(f, \pol_d(\CC^n)\Big) ^{1/d} = \dfrac{1}{\rho(f)},
\end{equation}
a continuous function $f$ on $K$ extends to a function in $\hh(K)$ if and only if $\rho(f) >1$
and, more precisely, for all $R<\rho(f)$, $f$ extends to a function in $A(K_R)$. 
\subsection{Bernstein-Markov measures}\label{sec:BMmeasures}
An asymptotically optimal approximation polynomial for functions in $\hh(K)$, $K$ as above, can be obtained as a Fourier expansion with respect to a suitable probability measure as follows. One says that a probability measure $\mu$ with compact support on $K$ is a \textbf{Bernstein-Markov measure} if, for all $\epsilon >0$, there exists a constant $C(\epsilon)=C(\epsilon, \mu)$ such that for all polynomial $p$ we have
\begin{equation}\label{eq:inBM}\|p\|_K \leq C(\epsilon)(1+\varepsilon)^{\deg (p)} \|p\|_2, 
\quad \|p\|_2=\sqrt{\int_K |p(z)|^2 d\mu(z)}.\end{equation}
In other words, the $L^2$-norm of polynomial behaves asymptotically essentially like the sup-norm on $K$. By a theorem of Nguyen and Zériahi \cite{ntv}, on a regular compact set $K$ as above, such a measure always exists. See also \cite{blolev} for a general discussion on Bernstein-Markov measures. \par  Given such a measure, we denote by $(b_\alpha)$ the orthonormal sequence of polynomials obtained by the Gram-Schmidt process from the monomial sequence $(z^\alpha)$ ordered with respect to the graded lexicographic order. In particular, $z^\alpha$ is the leading monomial in $b_\alpha$ and $\deg b_\alpha =|\alpha|$. When it is necessary to clarify the measure we use, instead of $b_\alpha(z)$, we use the notation $b(\alpha, \mu, z)$ as in the introduction. The corresponding orthogonal projection is
\begin{multline}\label{eq:orthprojBM} 
\pro_{d,\mathbf{dm}}(f)= \sum_{|\alpha|\leq d} c_\alpha(f) b_\alpha, \\\quad c_\alpha(f) = \langle f, b_\alpha\rangle =\int_K f(z)\overline{b}_\alpha(z)\mathbf{dm}(z),\quad 
 f\in \hh(K).
\end{multline}
It is well known, see \cite{zer1}, that, $\mathbf{dm}$ being a Bernstein-Markov measure,
\begin{equation}\label{eq:orthonerlybest} \limsup_{d\to\infty} \{\|f-\pro_{d,\mathbf{dm}}(f)\|_K\}^{1/d}= \limsup_{d\to\infty} \text{dist}(f, \pol_d(\CC^n)) ^{1/d}.\end{equation}
This is easily shown as follows. Taking $t_d$, a best (uniform) approximation of degree $d$ of $f$ on $K$,  i.e. such that $\text{dist}(f, \pol_d(\CC^n)) =\|f-t_d\|_K$, we have 
\begin{align}\|f-t_d\|_K & \leq \|f-\pro_{d,\mathbf{dm}}(f)\|_K \leq C(1+\epsilon)^d \|f-\pro_{d,\mathbf{dm}}(f)\|_{2}\\
&  \leq C(1+\epsilon)^d\|f-t_d\|_2 \leq C(1+\epsilon)^d\|f-t_d\|_K,\end{align}
where we use that the orthogonal projection furnishes the best $L^2$ approximant of $f$ on the second line. Now, \eqref{eq:orthonerlybest} immediately follows.
\par 
Likewise, since, by orthogonality,  
\[ c_{\alpha}(f)= \langle f, b_\alpha\rangle = \langle f-\pro_{|\alpha|-1,\mathbf{dm}}(f), b_\alpha\rangle,\]
by the Cauchy-Schwarz inequality, we have 
\begin{multline}\label{eq:boundcalpha}
|c_{\alpha}(f)|\leq \|f-\pro_{|\alpha|-1,\mathbf{dm}}(f)\|_2 \leq \|f-t_{|\alpha|-1}\|_2 \\ \leq \|f-t_{|\alpha|-1}\|_2\leq \text{dist}(f, \pol_{|\alpha|-1}(\CC^n)).
\end{multline}
Note that, in fact, we have the series expansion
\begin{equation}
f=\sum_{j=0}^\infty \sum_{|\alpha|=j} c_\alpha(f) b_\alpha, \quad f\in \hh(K), 
\end{equation}
where the convergence holds in $\hh(K)$. For this and other applications to pluripotential theory, we refer to \cite{zer1}. 
\par
Observe finally that, when applied with $p=b_\alpha$, inequality \eqref{eq:inBM} reads as
\begin{equation}\label{eq:ineqorthopoly}
\|b_\alpha\|_K \leq C(\epsilon)(1+\varepsilon)^{|\alpha|}, \quad |\alpha|=0,1,2, \dots.
\end{equation}
%
%
%
\subsection{Cartesian products}
We now collect a few facts that we will need later about the above notions in relation to Cartesian products of sets. 
\begin{asparaenum}[(i)]
	\item  The Cartesian product $K_1\times K_2 \subset \CC^{n_1}\times \CC^{n_2}$ of two regular polynomially convex sets is still regular polynomially convex and, see \cite{klimek},
	\[V_{K_1\times K_2}(z^1,z^2)=\max \big(V_{K_1}(z^1), V_{K_2}(z^2)\big).\] 
	In particular, 
	\[{(K_1\times K_2)}_R= {K_1}_R \times  {K_2}_R.\]
	\item The product  $\mu_1\times \mu_2$ of a Bernstein-Markov probability measure $\mu_1$ on $K_1$ by a Bernstein-Markov probability measure $\mu_2$ on $K_2$ is a Bernstein-Markov measure on $K_1\times K_2$, see \cite[Lemma 2, p. 290]{blocalTD}. Moreover, as is readily checked, the corresponding orthonormal polynomials satisfy the relation
\begin{equation}\label{eq:prodorthopol} b \left((\alpha^1,\alpha^2), \mu_1\times \mu_2, z \right) = b(\alpha^1, \mu_1, z^1) \times b (\alpha^2, \mu_2, z^2), \quad z=(z^1,z^2).
\end{equation}
\end{asparaenum}
\subsection{The convergence theorem}
Roughly, we prove that the Newton product of projectors that approximate efficiently holomorphic functions on regular compact sets itself efficiently approximates holomorphic function on the Cartesian product of the compact sets. 
\begin{theorem}\label{th:converghK} Let $K_s$ be a regular polynomially convex set in $\CC^{n_s}$ and  $\mathcal{N}^s=(\Pi_0^s, \Pi_1^s, \dots)$ a Newton sequence on $\hh(K_s)$,  $s=1,2$.  If $\mathcal{N}^s$ is converging on $\hh(K_s)$ for $s=1,2$ then $\mathcal{N}^1\ntimes \mathcal{N}^2$ is converging on $\hh(K_1\times K_2)$. 
\end{theorem}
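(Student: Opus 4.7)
The plan is to instantiate the three-step strategy of Section~\ref{sec:strategy}, with the product basis furnished by Bernstein-Markov orthonormal polynomials. I would pick Bernstein-Markov probability measures $\mathbf{dm}_s$ on $K_s$ (available by \cite{ntv}), let $b^s_\alpha=b(\alpha,\mathbf{dm}_s,\cdot)$ be the associated orthonormal polynomials, and set $p_{\alpha,\beta}(z^1,z^2):=b^1_\alpha(z^1)\,b^2_\beta(z^2)$. By \eqref{eq:prodorthopol} these are the orthonormal polynomials for the Bernstein-Markov product measure $\mathbf{dm}_1\times\mathbf{dm}_2$ on $K_1\times K_2$, so every $f\in\hh(K_1\times K_2)$ has the expansion $f=\sum_{j\geq 0}\sum_{|\alpha|+|\beta|=j}c_{\alpha,\beta}(f)\,p_{\alpha,\beta}$, convergent in $\hh(K_1\times K_2)$, which realises \eqref{eq:strat1}. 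The coefficient bound \eqref{eq:boundcalpha} combined with \eqref{eq:defrhof} gives $|c_{\alpha,\beta}(f)|\leq C(f)\,r_0^{\,j}$ for any fixed $r_0>1/\rho(f)$, with $\rho(f)>1$ since $f\in\hh(K_1\times K_2)$.

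The next step is to uniformly bound the Newton summands $\pi^s_i(b^s_\alpha)$. For any $R>1$ the Banach space $A(\overline{K_{s,R}})$ embeds continuously in $\hh(K_s)$, and $\hh(K_s)$ injects continuously in $\cc(K_s)$. Hence each $\Pi^s_d$ restricts to a bounded map $A(\overline{K_{s,R}})\to\cc(K_s)$, and the hypothesis that $\mathcal{N}^s$ is $\hh(K_s)$-converging forces $\Pi^s_d(g)\to g$ in $\cc(K_s)$ for every $g\in A(\overline{K_{s,R}})$. Banach-Steinhaus then yields $M=M(s,R)$ with $\|\Pi^s_d(g)\|_{K_s}\leq M\|g\|_{\overline{K_{s,R}}}$ for all $d$. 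Applying this to $g=b^s_\alpha$ and using \eqref{eq:BWSinequality} together with \eqref{eq:ineqorthopoly} gives $\|\pi^s_i(b^s_\alpha)\|_{K_s}\leq C_2(\epsilon)\,(R(1+\epsilon))^{|\alpha|}$; a second application of \eqref{eq:BWSinequality} to $\pi^s_i(b^s_\alpha)\in\pol_i(\CC^{n_s})$ extends this to $\|\pi^s_i(b^s_\alpha)\|_{\overline{K_{s,R'}}}\leq C_2(\epsilon)\,(R')^i(R(1+\epsilon))^{|\alpha|}$ for any $R'>1$.

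For Step~3 I would plug these estimates into \eqref{eq:strat2} via Lemma~\ref{th:lemmabd}. Since $p_{\alpha,\beta}$ is a product function its sup-norm on $\overline{K_{1,R'}\times K_{2,R'}}$ factors, while $|B(d,\alpha,\beta)|\leq(j+1)^2$ and $i_1+i_2\leq j:=|\alpha|+|\beta|$. Collecting everything yields
\[ \|f-\Pi_d(f)\|_{\overline{K_{1,R'}\times K_{2,R'}}}\leq C_3(f,\epsilon)\sum_{j\geq d+1}j^{C}\bigl(r_0\,R'\,R(1+\epsilon)\bigr)^{j} \]
for some polynomial factor $j^{C}$. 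Because $\rho(f)>1$, one can pick $R$ and $R'$ only slightly larger than $1$, $\epsilon$ small, and $r_0$ only slightly larger than $1/\rho(f)$, so that $r_0R'R(1+\epsilon)<1$; the tail decays geometrically and $\Pi_d(f)\to f$ uniformly on $\overline{K_{1,R'}\times K_{2,R'}}$, which is exactly convergence in $\hh(K_1\times K_2)$.

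The hard part is the coordination of the four ``close to $1$'' parameters $R,R',\epsilon,r_0$ in the final estimate: their product must be made strictly less than $1$, which is possible only thanks to the slack $\rho(f)>1$ built into membership of $\hh(K_1\times K_2)$. A secondary subtlety is the Banach-Steinhaus step, which has to be staged on the intermediate Banach space $A(\overline{K_{s,R}})$---on the (LF)-type space $\hh(K_s)$ itself no single operator-norm bound is available---so that the $\hh(K_s)$-convergence hypothesis translates into a bound one can apply to the test polynomials $b^s_\alpha$, whose norms grow controllably via Bernstein-Walsh-Siciak.
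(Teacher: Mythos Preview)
Your proposal is correct and follows the paper's own proof essentially line for line: Bernstein--Markov product basis for Step~1, Banach--Steinhaus on the intermediate Banach spaces $A(K_{s,R})$ for the uniform bound on the Newton summands, Lemma~\ref{th:lemmabd} together with \eqref{eq:BWSinequality} and \eqref{eq:ineqorthopoly} for the estimates, and the same balancing of the small parameters $R$, $\epsilon$, $r_0$ against $\rho(f)>1$. The only cosmetic difference is that you fold in the extra Bernstein--Walsh--Siciak factor $(R')^{i_1+i_2}$ from the outset to get convergence directly on $\overline{K_{1,R'}\times K_{2,R'}}$, whereas the paper first proves $\cc(K)$-convergence and then upgrades to $\hh(K)$ in a separate final paragraph via the same mechanism.
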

\begin{lemma} With the assumptions of the theorem, for $s=1,2$ and for all $R>1$, there exists a constant $\gamma_s(R)$ such that 
\[\|\Pi^s_d(f)\|_K \leq \gamma_s (R) \|f\|_{\overline{{K_s}_{R}}}, \quad f\in A({K_s}_{R}).\]
\end{lemma}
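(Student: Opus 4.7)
The plan is to apply the Banach--Steinhaus uniform boundedness principle to the family $\{\Pi^s_d\}_d$ acting on the Banach space $A(\overline{{K_s}_R})$. This is natural because $A(\overline{{K_s}_R})$ is a Banach space, whereas the ambient space $\hh(K_s)$ is only an inductive limit of such Banach spaces, to which Banach--Steinhaus does not apply directly. Note that $R>1$ and the regularity of $K_s$ guarantee that ${K_s}_R$ is an open bounded neighborhood of $K_s$ (because $V_{K_s}$ is continuous, vanishes on $K_s$, and grows like $\log|z|$ at infinity), so $A(\overline{{K_s}_R})$ is precisely one of the building-block Banach spaces whose inductive limit is $\hh(K_s)$.

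First, I would verify that for each fixed $d$ the linear map
\[\Pi^s_d : A(\overline{{K_s}_R}) \longrightarrow \cc(K_s), \qquad f \longmapsto \Pi^s_d(f)\big|_{K_s},\]
is continuous. Indeed, the inclusion $A(\overline{{K_s}_R}) \hookrightarrow \hh(K_s)$ is continuous by the very definition of the inductive limit topology; $\Pi^s_d$ is continuous on $\hh(K_s)$ by hypothesis (since $\mathcal{N}^s$ is a Newton sequence on $\hh(K_s)$); and the restriction map $\hh(K_s) \to \cc(K_s)$ is continuous, since convergence in $\hh(K_s)$ is uniform on some neighborhood of $K_s$ and hence a fortiori uniform on $K_s$.

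Next, I would establish pointwise boundedness on $A(\overline{{K_s}_R})$. For any fixed $f \in A(\overline{{K_s}_R}) \subset \hh(K_s)$, the assumption that $\mathcal{N}^s$ is converging on $\hh(K_s)$ yields $\Pi^s_d(f) \to f$ in $\hh(K_s)$; hence there exists an open neighborhood of $K_s$ on which the sequence converges uniformly, and in particular $\sup_d \|\Pi^s_d(f)\|_{K_s} < \infty$. Banach--Steinhaus \cite[Chapter 2]{rudin} applied to the family of continuous linear maps constructed in the previous step then provides a constant $\gamma_s(R)$ such that
\[\|\Pi^s_d(f)\|_{K_s} \leq \gamma_s(R) \, \|f\|_{\overline{{K_s}_R}}\]
uniformly in $d$ and in $f \in A(\overline{{K_s}_R})$, which is the desired inequality. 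I do not expect a real obstacle here: the argument is essentially a bookkeeping exercise ensuring that the pointwise convergence assumption on the inductive limit $\hh(K_s)$ is promoted, via Banach--Steinhaus on a suitable Banach slice, to a genuine operator-norm bound.
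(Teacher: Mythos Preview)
Your proposal is correct and follows essentially the same route as the paper: view each $\Pi^s_d$ as a continuous linear map from the Banach space $A({K_s}_R)$ into $\cc(K_s)$, use the convergence assumption on $\mathcal{N}^s$ to get pointwise boundedness, and conclude by Banach--Steinhaus. The paper's argument is identical in substance, differing only in that it justifies continuity into $\cc(K_s)$ by invoking the equivalence of norms on the finite-dimensional range $\pol_d(\CC^{n_s})$, whereas you factor through the continuous restriction $\hh(K_s)\to\cc(K_s)$; both justifications are valid.
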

\begin{proof} Each projector $\Pi^s_d$ is a continuous projector from $\hh(K_s)$ onto $\pol_d(\CC^n)$. Using the definition of the topology on $\hh(K_s)$ and the fact that every norm is equivalent on $\pol_d(\CC^{n_s})$, $\Pi_d$ defines a continuous projector from the Banach space $A(K_R)$ to the Banach space $\cc(K)$ of continuous function on $K$. The fact that $\mathcal{N}^s$ is converging yields that $\Pi^s_d(f)$ converges to $f$ uniformly on $K$ for all $f\in A(K_R)$ hence, 
for such $f$, the sequence $(\Pi^s_d(f))$ is bounded in $\cc(K)$. Now, the claim results from an application of the uniform boundedness principle (Banach-Steinhaus Theorem) on the Banach space $A(K_R)$. 
\end{proof} 

\begin{corollary}\label{eq:corunfiboundedness} Likewise, using the Newton summands, $\pi_d^s$, see \eqref{eq:withnewsum}, for  $s=1,2$ and for all $R>1$, there exists a constant $\delta_s(R)$ such that 
\[\|\pi^s_d(f)\|_K \leq \delta_s (R) \|f\|_{\overline{{K_s}_{R}}}, \quad f\in A({K_s}_{R}).\]
\end{corollary}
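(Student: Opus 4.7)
The plan is to deduce this directly from the preceding lemma by writing $\pi^s_d$ as a difference of two consecutive projectors in the Newton sequence and applying the triangle inequality. Recall from \eqref{eq:withnewsum} that $\pi^s_0 = \Pi^s_0$ and $\pi^s_d = \Pi^s_d - \Pi^s_{d-1}$ for $d \geq 1$. Since both $\Pi^s_d$ and $\Pi^s_{d-1}$ are continuous as maps from $A(\overline{{K_s}_R})$ into $\cc(K_s)$ (and indeed are already covered by the previous lemma), the same is true of their difference.

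Concretely, for $f \in A({K_s}_R)$ and $d \geq 1$, I would write
\[
\|\pi^s_d(f)\|_{K_s} \leq \|\Pi^s_d(f)\|_{K_s} + \|\Pi^s_{d-1}(f)\|_{K_s} \leq 2\gamma_s(R) \|f\|_{\overline{{K_s}_R}},
\]
using the bound from the lemma twice. For $d=0$, the bound $\|\pi^s_0(f)\|_{K_s} = \|\Pi^s_0(f)\|_{K_s} \leq \gamma_s(R) \|f\|_{\overline{{K_s}_R}}$ is immediate. Hence the constant $\delta_s(R) := 2\gamma_s(R)$ (with the convention $\Pi^s_{-1} = 0$) works uniformly in $d$, which is exactly the claim.

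There is essentially no obstacle here: the statement is a formal corollary of the previous lemma together with the definition of the Newton summands. The only thing to notice is that the bound from the previous lemma is uniform in $d$ (it depends only on $R$ via the application of Banach--Steinhaus on the Banach space $A(\overline{{K_s}_R})$), and so the bound obtained for $\pi^s_d$ is likewise uniform in $d$, as required.
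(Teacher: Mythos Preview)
Your argument is correct and is essentially the same as the paper's own proof, which simply says to use the lemma together with the relation $\pi^s_d=\Pi^s_d-\Pi^s_{d-1}$ (the paper writes $\Pi^s_{d+1}-\Pi^s_d$, an evident slip). Your version is just a slightly more explicit rendering of that one-line argument, including the harmless check of the case $d=0$ and the explicit choice $\delta_s(R)=2\gamma_s(R)$.
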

\begin{proof} Use the lemma and the fact that $\pi_d^s=\Pi^s_{d+1}-\Pi^s_d$.  
\end{proof} 
\begin{proof}[Proof of Theorem \ref{th:converghK}] We take a Bernstein-Markov measure $\mu_s$ on $K_s$ and consider its product $\mu=\mu_1\times \mu_2$ which is Bernstein-Markov on $K_1\times K_2$, see Subsection \ref{sec:BMmeasures}. Following the strategy explained in Subsection \ref{sec:strategy}, we start from from the expansion 
\begin{equation}\label{eq:profhK_1}%
 f=\sum_{j=0}^\infty \sum_{|\alpha|+|\beta|=j} c_{\alpha\beta}(f) b_{\alpha,\beta}, \quad c_{\alpha\beta}(f)=\iint\limits_{K_1\times K_2} f(z) \overline{b}_{\alpha,\beta}(z)d\mu_1(z^1)d\mu_2(z^2),
\end{equation} 
where $f$ is any fixed element in $\hh(K)$, $z=(z^1,z^2)$, $z^s\in K_s$, and, see \eqref{eq:prodorthopol}, 
\begin{equation} 
b_{\alpha,\beta}(z^1,z^2)=b (\alpha, \mu_1,z^1 )\times b (\beta, \mu_2, z^2).
\end{equation}
Recall that convergence in \eqref{eq:profhK_1} holds in $\hh(K)$. Next, a use of equation \eqref{eq:strat2} and Lemma \ref{th:lemmabd} gives 
\begin{equation}\label{eq:profhK_2}
f-\Pi_d(f)= \sum_{j=d+1}^\infty \sum_{|\alpha|+|\beta|=j} c_{\alpha\beta}(f) \sum_{(i_1,i_2)\in B(d,\alpha,\beta)} \pi^1_{i_1}\big(b (\alpha, \mu_1,\cdot)\big)\pi^2_{i_2} \big(b (\beta, \mu_2, \cdot)\big), 
\end{equation}
where $B(d,\alpha, \beta)$ is defined in \eqref{eq:defBd}. We will use this expression to show that $f-\Pi_d(f)$ converges uniformly to $0$ on $K$. Define as above, 
\[\limsup_{d\to\infty} \text{dist}\big(f, \pol_d(\CC^n)\big) ^{1/d} = \dfrac{1}{\rho(f)},\]
 so that since $f\in \hh(K)$, $\rho(f)>1$. Choose $1< R_1< R_2 <\rho(f)$ and $\epsilon >0$ to be fixed later.  A use of Corollary \ref{eq:corunfiboundedness} with $R=R_1$ gives 
\begin{multline}
\|\pi^1_i\big(b (\alpha, \mu^1,\cdot)\big)\pi^2_j \big(b (\beta, \mu_2, \cdot)\big)\|_K 
\leq \|\pi^1_i\big(b (\alpha, \mu^1,\cdot)\big)\|_{K_1}\|\pi^2_j \big(b (\beta, \mu_2, \cdot)\big)\|_{K_2}\\
 \leq 
\delta_1(R_1) \|b (\alpha, \mu^1,\cdot)\|_{\overline{{K_1}_{R_1}}}\cdot \delta_2(R_1) \|b (\alpha, \mu^1,\cdot)\|_{\overline{{K_2}_{R_1}}},
\end{multline}
Now, applying inequality \eqref{eq:BWSinequality} to the right hand side together with the bound \eqref{eq:ineqorthopoly} for the orthonormal polynomials (with the current $\epsilon$), we get the following estimate :
\begin{multline}
\|\pi^1_i\big(b (\alpha, \mu^1,\cdot)\big)\pi^2_j \big(b (\beta, \mu_2, \cdot)\big)\|_K \\ \leq 
\delta_1(R_1)\delta_2(R_1) R_1^{|\alpha|+|\beta|} C(\epsilon, \mu_1)C(\epsilon, \mu_2)(1+\epsilon)^{|\alpha|+|\beta|}
\\ \leq 
\delta_1(R_1)\delta_2(R_1) R_1^{j} C(\epsilon, \mu_1)C(\epsilon, \mu_2)(1+\epsilon)^{j},
\end{multline} 
where we used $|\alpha|+|\beta|=j$. 
Using, this estimate in \eqref{eq:profhK_2} and setting \[\tau = \delta_1(R_1)\delta_2(R_1)C(\epsilon, \mu_1)C(\epsilon, \mu_2),\] we obtain 
\begin{equation}\label{eq:profhK_3} \|f-\Pi_d(f)\|_K \leq \tau
\sum_{j=d+1}^\infty (R_1(1+\epsilon))^j \sum_{|\alpha|+|\beta|=j}  c_{\alpha\beta}(f)\text{card}(B(d,\alpha, \beta)), 
\end{equation}
where $\text{card}$ is used to denote the cardinality. 
Now, since $R_2< \rho(f)$, in view of 
\eqref{eq:boundcalpha}, there exists $\xi=\xi(R_2)$
\begin{equation}
|c_{\alpha\beta}(f)|\leq \text{dist}(f, \pol_{|\alpha|+|\beta|-1}(\CC^n)) \leq \xi \left(\dfrac{1}{R_2}\right)^{|\alpha|+|\beta|}=\xi \dfrac{1}{R_2^{j}}. 
\end{equation}
(Note that, here, $\xi$ is needed only for notational convenience, to have a bound valid for all $\alpha$ and $\beta$, rather than for $|\alpha|+|\beta|$ large enough.) 
Hence, together with \eqref{eq:profhK_3}, we obtain 
\begin{equation}\label{eq:profhK_4}
\|f-\Pi_d(f)\|_K \leq \tau\xi
\sum_{j=d+1}^\infty \left(\dfrac{R_1(1+\epsilon)}{R_2}\right)^j \sum_{|\alpha|+|\beta|=j}\text{card}(B(d,\alpha, \beta)). 
\end{equation}
It remains to observe that the right hand side sum over $\alpha$ and $\beta$ grows polynomially in $j$. This is readily seen. Indeed, 
\begin{equation}\label{eq:escardBd}%
\text{card}(B(d,\alpha, \beta))\leq (|\alpha|+1) \times (|\beta|+1) \leq (j+1)^2 \quad \text{(since $|\alpha|+|\beta|=j$)}.
\end{equation}
Hence,
\begin{equation}\label{eq:profhK_5} 
\sum_{|\alpha|+|\beta|=j}\text{card}(B(d,\alpha, \beta)) \leq (j+1)^2\binom{n+j}{j-1} =O (j^{n+2}), \quad n=n_1+n_2.
\end{equation}
Now take $\epsilon$ so that $R_1(1+\epsilon) <R_2$. This is possible since $R_1< R_2$ and $\epsilon$ can be taken arbitrarily small. 
In view of \eqref{eq:profhK_5}, the right hand side of \eqref{eq:profhK_4} is the remainder of a converging series and this shows that $\|f-\Pi_d(f)\|_K \to 0$ as $d$ tends to $\infty$. Thus, at this point, using the terminology introduced in definition \ref{def:convergingstuff}, we proved that $\mathcal{N}^1\ntimes \mathcal{N}^2$ is $\cc(K)$-converging on $\hh(K_1\times K_2)$.
%
\par\smallskip 
Actually, since $\Pi_d(f)$ is a polynomial projector the uniform convergence on $K$ implies the convergence on a compact neighborhood of $K$, hence in $\hh(K)$.
 Such a reasoning is detailed in \cite[Section 4, p. 17]{blocal2}. 
In fact, since $R_1(1+\epsilon)$ can be taken arbitrarily close to $1$ and $R_2$ arbitrarily close to $\rho(f)$, our proof actually shows that
\begin{equation}\label{eq:profhK_6}
\limsup_{d\to\infty}  \|f-\Pi_d(f)\|_K^{1/d}=\dfrac{1}{\rho(f)},
\end{equation}
so that $\Pi_d(f)$ provides an asymptotically optimal approximation. This fact also classically implies convergence in $\hh(K)$. Here is a sketch of the proof. 
The series \[L=\Pi_0(f)+\sum_{d=0}^\infty \Pi_{d+1}(f)-\Pi_d(f)\] is  normally converging on $\overline{K_R}$, for $R < \rho(f)$. Indeed,  by  Bernstein Walsh Siciak inequality, 
\eqref{eq:BWSinequality}
\begin{align*}
\|\Pi_{d+1}(f)-\Pi_d(f)\|_{K_R} & \leq R^{d+1} \|\Pi_{d+1}(f)-\Pi_d(f)\|_{K} \\
& \leq  R^{d+1} \|f-\Pi_d(f)\|_K + R^{d+1} \|f-\Pi_{d+1}(f)\|_K ,
\end{align*}
which in view \eqref{eq:profhK_6} is the general term of a converging series. Now the limit $L$ must equal $f$ on $K_R$ since it coincides with $f$ on $K$. 
\end{proof}
\subsection{Examples}\label{sec:exampleHK} Of course, most classical constructive polynomial approximation results in the complex domain can be used together with the above theorem to get efficient multivariate projectors. We just point out three very natural examples. The first one shows how a well-known result is re-captured with our theorem, the second and third ones provide new projectors which, it seems, deserve particular attention. All the Newton products we consider below are constructed using the Newton structure induced by the ordering of the interpolation points. 
\par\smallskip
\begin{asparaenum}[(A)]
\item For $i=1,\dots,n$, we let $K_i$ denote a regular polynomially convex plane compact set, and $a_d^i$, $d\in \NN$, be a sequence of points on the boundary of $K_i$ such that the discrete measure
\[\mu_d^i=\dfrac{1}{d+1}\sum_{j=0}^d [a_d^j]\]
converges weakly to the \emph{equilibrium measure} on $K_i$. 
Then, for every $f\in \hh(K_1\times \cdots \times K_n)$, we have
\begin{equation}\label{eq:prolag}%
\lag[a_0^1,\dots,a_d^1]\ntimes \lag[a_0^2,\dots,a_d^2]\ntimes \cdots \ntimes \lag[a_0^n,\dots,a_d^n] (f)%
\end{equation}
converges to $f$ in $\hh(K_1\times \cdots \times K_n)$. This follows from classical univariate Lagrange interpolation theory, see the references given above, together with (iterated applications of) Theorem \ref{th:converghK}. This well-known result was first published (in another presentation) in \cite{siciak}. Siciak's proof used a multivariate version of the classical (complex) Hermite error formula for Lagrange-Hermite interpolation. Note that the projector on the left hand side of \eqref{eq:prolag} is itself a multivariate Lagrange interpolation $\lag[A^d]$ with the Bierman set of interpolation points, see also the introduction,
\[
A_d=\left\{\left(a_{i_1}^1,a_{i_2}^2, \dots, a_{i_n}^n\right)\;:\; i_1+\dots i_n\leq d\right\}.
\] 
We refer to \cite[Section 6.3, p. 37]{bercal1} for details and earlier references.
\par\smallskip
\item Let $K$ be a convex circular compact set in $\CC^n$ (i.e. $z\in K, \; \theta\in \RR \implies e^{i\theta}z\in K$). If $(a_d)$ is a sequence of points in $K$ such that  
\[\mu_d=\dfrac{1}{d+1}\sum_{j=0}^d [a_j]\] 
converges weakly to a  $e^{i\theta}$-invariant measure $\mu$, that is, such that 
\[\int_K f(z)d\mu(z)= \int_{k} f(e^{i \theta}z)d\mu(z)\quad\text{for all $f\in \cc(K)$ and $\theta\in \RR$},\] 
then the sequence of Kergin projectors $(\kg[a_0,\dots,a_d])$ is converging on $\hh(K)$, see \cite[Theorem 4.1]{blocal1}. \par  Now, assume that, for $s=1,2$, $K_s$ is the product of two convex circular sets, $K=K_1\times K_2$ and that $(a_d^s)$ is a sequence of points in $K_s$ such that  
\[\mu_d^s=\dfrac{1}{d+1}\sum_{j=0}^d [a_j^s]\] 
converges weakly to a  $e^{i\theta}$-invariant measure $\mu_s$ on $K_s$. It is readily seen that the above result applies to the sequence $(a_d)=(a_d^1,a_d^2)$ so that : 
\par\smallskip
\begin{asparaenum}
\item \emph{The sequence of Kergin projectors $(\kg[(a_0^1,a_0^2),\dots,(a_d^1,a_d^2)])$ is converging on $\hh(K)$.} 
\par\smallskip
\item On the other hand, an application of Theorem \ref{th:converghK} gives : 
\begin{theorem} The sequence of projectors \((\kg[a_0^1,\dots,a_d^1]) \ntimes (\kg[a_0^2,\dots,a_d^2])\) is converging on $\hh(K)$. 
\end{theorem}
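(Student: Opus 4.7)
The plan is to present this result as a direct application of Theorem \ref{th:converghK} once the hypotheses have been checked, so the proof is a matter of unpacking definitions and invoking previously cited facts.

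First, I would verify that $(\kg[a_0^s,\dots,a_d^s])_{d\in\NN}$ is a Newton sequence on $\hh(K_s)$ for $s=1,2$. Since by construction the interpolation point sets are nested, $\{a_0^s,\dots,a_d^s\}\subset\{a_0^s,\dots,a_{d+1}^s\}$, the inclusion of interpolation conditions $\is(\kg[a_0^s,\dots,a_d^s])\subset\is(\kg[a_0^s,\dots,a_{d+1}^s])$ follows from the explicit description of Kergin interpolation conditions recalled in the introduction (item (4)) and the Newton-sequence example (D). Hence condition \eqref{eq:newstruwithN0} holds.

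Second, I would verify that $\mathcal{N}^s$ is converging on $\hh(K_s)$. Each $K_s$, being a convex circular compact set in $\CC^{n_s}$, is regular and polynomially convex (convexity implies polynomial convexity, and convex bodies are known to be regular for the pluricomplex Green function). The hypotheses on the sequences $(a_d^s)$ and the circular invariance of the asymptotic measure $\mu_s$ are exactly those of \cite[Theorem 4.1]{blocal1}, which therefore yields the convergence of $\kg[a_0^s,\dots,a_d^s](g)\to g$ in $\hh(K_s)$ for every $g\in \hh(K_s)$.

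Third, I would invoke the compatibility of products: $K=K_1\times K_2$ is again regular polynomially convex, with $V_{K_1\times K_2}=\max(V_{K_1},V_{K_2})$ and $(K_1\times K_2)_R=(K_1)_R\times(K_2)_R$, as recalled in the Cartesian products subsection. This places us precisely in the setting of Theorem \ref{th:converghK}.

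Finally, applying Theorem \ref{th:converghK} to $\mathcal{N}^1$ and $\mathcal{N}^2$ immediately yields that $\mathcal{N}^1\ntimes\mathcal{N}^2=(\kg[a_0^1,\dots,a_d^1])\ntimes(\kg[a_0^2,\dots,a_d^2])$ is converging on $\hh(K_1\times K_2)=\hh(K)$. There is no serious obstacle; the only point requiring care is the check that convex circular compacta fit the pluripotential-theoretic hypotheses (regularity and polynomial convexity) required by both \cite[Theorem 4.1]{blocal1} and Theorem \ref{th:converghK}, but this is classical.
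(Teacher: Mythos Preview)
Your proposal is correct and matches the paper's approach exactly: the theorem is stated in the text as an immediate application of Theorem~\ref{th:converghK}, once the convergence of each factor sequence $\mathcal{N}^s$ on $\hh(K_s)$ has been secured via \cite[Theorem 4.1]{blocal1} and the Newton-sequence structure of nested Kergin interpolants (example (D)) is noted. Your additional remarks on regularity and polynomial convexity of convex circular compacta, and on the behaviour of Cartesian products, simply make explicit what the paper takes for granted from the surrounding discussion.
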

\end{asparaenum}
\par\smallskip
It is interesting to compare these two converging sequences of Kergin-related projectors. The second has the advantage of interpolating at $\binom{d+1}{2}$ points, see \cite[Section 6.6, p. 42]{bercal1}, whereas the first one interpolates at only $d+1$ points. However, there is a price to pay for this added value, the projectors obtained  with the Newton product no longer preserve (all) homogeneous partial differential relations. For this notion we refer to \cite[Section 6.7]{bercal1} and the references therein. 
\par\smallskip
\item\label{ex:bierm} The next example goes along the same lines but is still more interesting since, in that case, as far as we know, no explicit good interpolation projectors were known. We consider a cylinder, say $D(0,1)\times [-1,1]$ in $\RR^3$. It was shown in \cite{blocal2} that if $(a_d)$ is a sequence of points in the disc $D(0,1)$ such that $\mu_d=\frac{1}{d+1}\sum_{j=0}^d [a_j]$ weakly converges to the (normalized) $d\theta$ measure on the unit circle then the sequence of Kergin projectors $(\kg[a_0,\dots,a_d])$ is converging on $\hh(D(0,1))$ where $D(0,1)$ is regarded as a subset of $\CC^2$. Likewise, if $(b_d)$ is a sequence of points in the interval $[-1,1]$ whose corresponding $\mu_d$ measure converges to the equilibrium measure $dx/\sqrt{1-x^2}$ then classical Lagrange interpolation theory tells that the sequence of Lagrange projectors $(\lag[b_0,\dots,b_d])$  is converging on $\hh([-1,1])$ (where $[-1,1]$ is regarded as a subset of $\CC$). Hence, an application of Theorem \ref{th:converghK} gives :
\begin{theorem}\label{th:exkerlejalagrleja} The sequence of projectors \((\kg[a_0,\dots,a_d]) \ntimes (\lag[b_0,\dots,b_d])\)  is converging on $\hh(D(0,1)\times [-1,1])$. \end{theorem}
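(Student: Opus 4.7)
The plan is to apply Theorem \ref{th:converghK} verbatim to $K_1=D(0,1)\subset \CC^2$, $K_2=[-1,1]\subset \CC$, $\mathcal{N}^1=(\kg[a_0,\dots,a_d])_{d\in \NN}$ and $\mathcal{N}^2=(\lag[b_0,\dots,b_d])_{d\in \NN}$. There is nothing subtle in the argument: the three ingredients required by that theorem have all been stipulated (or are classical) in the paragraph preceding the statement, so the role of the proof is only to line them up and quote.

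First I would record that $D(0,1)$ (viewed as the closed unit disc in $\RR^2\subset \CC^2$) and $[-1,1]$ (viewed in $\CC$) are regular polynomially convex compact sets; both are standard examples in pluripotential theory (see \cite{klimek}). Next I would verify that $\mathcal{N}^1$ and $\mathcal{N}^2$ are Newton sequences in the sense of \eqref{eq:newstruwithN0}: since the enumerations of interpolation points are nested ($\{a_0,\dots,a_d\}\subset \{a_0,\dots,a_{d+1}\}$ and similarly for the $b_j$), one has $\is(\kg[a_0,\dots,a_d])\subset \is(\kg[a_0,\dots,a_{d+1}])$ and the analogous inclusion for the Lagrange sequence, so the nestedness condition defining a Newton sequence is automatic.

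The substantive input is that each divisor sequence is converging on its natural space, in the sense of Definition \ref{def:convergingstuff}. For $\mathcal{N}^1$ this is precisely the theorem of \cite{blocal2} recalled just before the statement: under the hypothesis $\mu_d \to d\theta/(2\pi)$ on $\partial D(0,1)$, the Kergin projectors $\kg[a_0,\dots,a_d]$ converge in $\hh(D(0,1))$. For $\mathcal{N}^2$ it is the classical Walsh convergence theorem for Lagrange interpolation with nodes asymptotically distributed according to the equilibrium measure $dx/(\pi\sqrt{1-x^2})$ of $[-1,1]$, cf.\ \cite{walsh,gaier}. Once these three facts are in hand, Theorem \ref{th:converghK} delivers the stated convergence of $\mathcal{N}^1 \ntimes \mathcal{N}^2$ on $\hh(D(0,1)\times[-1,1])$ immediately. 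I do not anticipate any obstacle: the whole point of Theorem \ref{th:converghK} is precisely to render such mixed Newton product constructions routine to analyse, so the genuine content of the statement lies not in its proof but in the observation that the factors are readily available in this mixed-dimensional ($n_1=2$, $n_2=1$) situation where no comparably explicit interpolation scheme was previously known on the cylinder.
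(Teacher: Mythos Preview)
Your proposal is correct and follows exactly the route the paper takes: the paper derives the theorem as a direct application of Theorem \ref{th:converghK}, invoking \cite{blocal2} for the convergence of the Kergin sequence on $\hh(D(0,1))$ and classical Lagrange interpolation theory for the convergence on $\hh([-1,1])$. Your additional verifications (regularity and polynomial convexity of the factors, the nestedness giving Newton sequences) are implicit in the paper's setup and only make the argument more explicit.
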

The resulting projector interpolates at all points $(a_i,b_j)$ for $i+j\leq d$. We represent these points on the figure below in the case, which is still more interesting as we will see in Section \ref{sec:smooth}, where $(a_d)$ is a Leja sequence for the unit circle and $(b_d)$ is the corresponding $\Re$-Leja sequence, that is, $(b_d)$ is the sequence of the first coordinates of $(a_d)$ in which repeated points are eliminated. \par 
The elements of a Leja sequence for the unit disk can be recursively constructed as follows see \cite[Theorem 5 and Corollary 2]{biacal}
\begin{align} S_1&=(1,-1)\\
S_{2^{n+1}}& = S_{2^n} \wedge \exp \left(\frac{\pi}{2^(n-1)}\right) S_{2^n}, \quad n\geq 1,  \end{align}
where $\wedge$ denotes the usual concatenation operation on tuples. A precise description of $\Re$-Leja sequence is available in \cite{calmanh2}. 
\begin{table}[h!]
\begin{tabular}{p{0.45\textwidth}p{0.45\textwidth}}
		\includegraphics[width=0.4\textwidth]{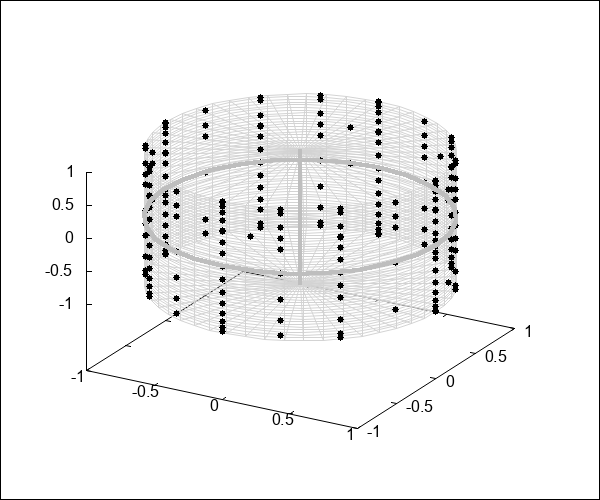} & 
\includegraphics[width=0.4\textwidth]{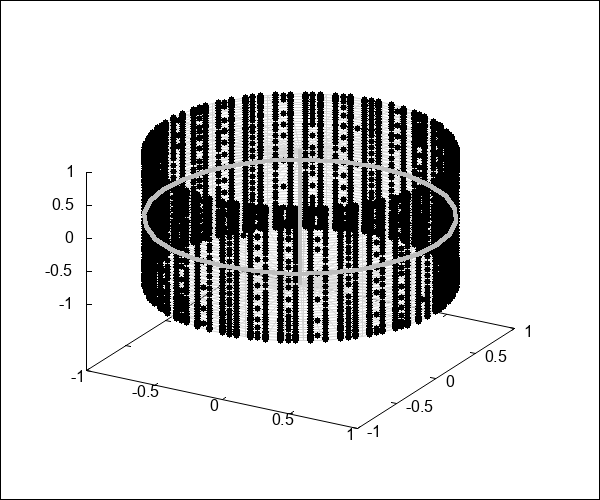} \\
		\begin{center}
		$d=20$
		\end{center} & \begin{center}
		$d=100$
		\end{center}
\end{tabular}
\caption{Interpolation points for a Newton product of Kergin interpolants at Leja points and Lagrange interpolants at $\Re$-Leja points in the cylinder $D(0,1)\times [-1,1]$.}
\label{tab:pointsLeja}
\end{table}
\end{asparaenum} 

\section{Spaces of entire functions}
The effectiveness of a polynomial projector defined on the space of entire 
functions usually depends on the growth (order $\omega$, type $\tau$) of the approximated functions; roughly, the farther the function is from the space of polynomials the stronger are the requirements on the projector. When 
the projectors are univariate Lagrangian projectors at a sequence of points $(a_d)$ (of increasing modulus), the acceptable values for $\omega$ and $\tau$ depend on the velocity at which $|a_d|$ goes to infinity. For instance, a classical result of Polya states that all entire functions of exponential type $<\ln 2$ can be approximated (in $\hh(\CC)$) by Lagrange interpolation at the points $a_d=d$ and the upper bound $\ln 2$ is optimal. Precise statements will be recalled below. As indicated above, the book by Gelfond \cite{guelfond} contains many results on this subject. \par Here, assuming that $\mathcal{N}^s$ is $\hh(\CC^{n_s})$-converging on a space $E_s$ of entire functions on $\CC^{n_s}$ for $s=1,2$, we will look for a space of entire functions $E$ on which $\mathcal{N}^1\ntimes \mathcal{N}^2$ is $\hh(\CC^{n})$-converging, $n=n_1+n_2$.
\par
We will begin by recalling the required material on the study of growth of entire functions of several variables.

\subsection{Growth of entire functions} 
Let $f\in \hh(\CC^n)$ and $N$ a norm on $\CC^n$. We set 
\[
M_N(f,r)=\max_{N(z)\leq r} |f(z)|, \quad r\geq 0.
\]
Given $\omega>0$, the \emph{$\omega$-type} $\tau=\tau(f,\omega,N)$ of $f\in \hh(\CC^m)$ is defined as 
\[
\tau= \limsup_{r\to\infty} \dfrac{\ln M_N(f,r)}{t^\omega}.
\]
When $\tau$ is finite, it is the infimum of all $s$ such that $\ln M_N(f,r) \leq sr^\omega+ O(1)$ as $r\to \infty$. The type may be infinite (when no such $\tau$ exists) and it depends on the norm $N$ we work with. For instance, for $\lambda>0$,
\begin{equation}\label{eq:typelambdaN}
\tau(f,\omega, \lambda N)= \dfrac{1}{\lambda} \tau(f,\omega,N).
\end{equation}
The interesting $\omega$-type is computed when $\omega$ is the \emph{order} of $f$, that is, the infimum of all $s$, if there exists, such that $\ln M_N(f,r)=O(w^r)$ as $r\to \infty$, 
\[
\omega= \limsup_{r\to\infty} \dfrac{\ln\ln M_N(f,r)}{\ln r}.
\]
In contrast to the $\omega$-type, the order of an entire function does not depend on the norm $N$.  A function of finite $1$-type $\tau$ is said to be of \emph{exponential type} $\tau$. 
\par
In the sequel, \begin{quote}\emph{we will assume we work with norms the ball of which are poly-circular, that is to say, $N(z_1,\dots,z_n)\leq 1 \implies N(\lambda_1 z_1,\dots,\lambda_n z_n)\leq 1$ for $|\lambda_i|=1$, $i=1, \dots, n$.}\end{quote}
 For sake of reference, we will call such a norm a \emph{PC-norm}. The common $l_p$ norms satisfy this condition. The usefulness of this condition appears in the lemma below which will enable us to use the strategy described in Subsection \ref{sec:strategy} by using the standard power series expansion as the starting expansion \eqref{eq:strat1}. Namely the lemma shows how the growth of an entire function is reflected into the behavior of the coefficients of its power series expansion. At the end of this section, we briefly explain a way to circumvent the condition for the norm to be PC, at least in certain important cases. 

\begin{lemma}\label{th:estimacoefpoweseries}
Let $f(z)=\sum_{\alpha\in \NN^n} a_\alpha z^\alpha$ be an entire function and $N$ a PC-norm on $\CC^n$. We have 
\begin{equation}\label{eq:estimacoefpoweseries}
|a_\alpha| \leq t^{-|\alpha|} \dfrac{M_N(f,t)}{\delta_N(\alpha)}, \quad t > 0, 
\end{equation} 
where 
\begin{equation}\label{eq:deltaN}%
\delta_N(\alpha)=\max\{|z^\alpha|\,:\, N(z)\leq 1\}, \quad \alpha\in\NN^n.
\end{equation}%
\end{lemma}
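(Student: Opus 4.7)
The plan is to use the standard multivariate Cauchy integral formula for Taylor coefficients on a carefully chosen polytorus. Since $f$ is entire, for any multiradius $r=(r_1,\ldots,r_n)$ with $r_i>0$, one has
\[
a_\alpha = \frac{1}{(2\pi i)^n} \int_{T_r} \frac{f(\zeta)}{\zeta_1^{\alpha_1+1} \cdots \zeta_n^{\alpha_n+1}} \, d\zeta_1 \cdots d\zeta_n,\qquad T_r=\{\zeta:|\zeta_i|=r_i,\ i=1,\ldots,n\},
\]
and the trivial modulus estimate gives $|a_\alpha|\leq \sup_{T_r}|f|/r^\alpha$. The whole point is to choose $r$ so that $T_r\subset\{N(z)\leq t\}$ while making $r^\alpha$ as large as possible.

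The PC-norm hypothesis is precisely what makes this optimization clean. Because $N$ is invariant under the $n$-torus action $(\zeta_1,\ldots,\zeta_n)\mapsto(\lambda_1\zeta_1,\ldots,\lambda_n\zeta_n)$ with $|\lambda_i|=1$, every point of $T_r$ has the same $N$-norm as the real point $(r_1,\ldots,r_n)$. Hence $T_r\subset\{N\leq t\}$ if and only if $N(r_1,\ldots,r_n)\leq t$, in which case $\sup_{T_r}|f|\leq M_N(f,t)$. Setting $r_i=ts_i$, the sharpest Cauchy bound becomes
\[
|a_\alpha|\leq \frac{M_N(f,t)}{t^{|\alpha|}\, \sup\bigl\{s_1^{\alpha_1}\cdots s_n^{\alpha_n}\, :\, s_i>0,\ N(s_1,\ldots,s_n)\leq 1\bigr\}}.
\]
By the PC property again, for any $z\in\CC^n$ with $N(z)\leq 1$ the point $(|z_1|,\ldots,|z_n|)$ also satisfies $N(|z_1|,\ldots,|z_n|)\leq 1$ and realizes the same value of $|z^\alpha|$, so the supremum in the denominator coincides with $\delta_N(\alpha)$ as defined in \eqref{eq:deltaN}.

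The only technical hitch is that the Cauchy formula requires $r_i>0$ for every $i$, whereas a maximizer $z^*$ for $\delta_N(\alpha)$ may have some vanishing coordinates. I would dispose of this by a density argument: if $\alpha_i>0$ then necessarily $z^*_i\neq 0$ (otherwise $z^{*\alpha}=0<\delta_N(\alpha)$ whenever $\alpha\neq 0$; the case $\alpha=0$ being trivial since $\delta_N(0)=1$ and $|a_0|=|f(0)|\leq M_N(f,t)$), so the only zero entries of $z^*$ occur at indices $i$ with $\alpha_i=0$. Replacing those zeros by $\mu>0$ and dividing by $1+\mu\,N(e)$, where $e$ is the indicator vector of those indices, produces a family $\tilde z(\mu)$ with strictly positive entries, $N(\tilde z(\mu))\leq 1$, and $\tilde z(\mu)^\alpha\to \delta_N(\alpha)$ as $\mu\to 0^+$. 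Applying the Cauchy bound with $r=t\tilde z(\mu)$ and letting $\mu\to 0$ gives \eqref{eq:estimacoefpoweseries}. I do not expect any step here to be a serious obstacle; the PC-norm hypothesis is exactly tailored to decouple the polytorus constraint into the scalar condition $N(r_1,\ldots,r_n)\leq t$, and the rest is bookkeeping.
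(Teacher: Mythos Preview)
Your argument is correct and slightly more direct than the paper's. The paper (following Ronkin) proceeds in two steps: first it decomposes $f=\sum_k P_k$ into homogeneous components and bounds $C_k:=\max_{N(z)=1}|P_k(z)|$ by applying the \emph{univariate} Cauchy inequality to $w\mapsto f(wu)$ along the ray through a maximizer $u$, obtaining $C_k\leq t^{-k}M_N(f,t)$; then it applies the \emph{multivariate} Cauchy inequality to the polynomial $P_k$ on polytori $T_r$ with $r\in L=\{(|u_1|,\ldots,|u_n|):N(u)\leq 1\}$, using the PC hypothesis only to ensure $L\subset\{N\leq 1\}$, and takes the supremum over $r\in L$ to produce $\delta_N(\alpha)$ in the denominator. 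You instead apply the multivariate Cauchy estimate to $f$ itself in one shot, using the PC hypothesis (in the equivalent form that $N$ is torus-invariant, hence $N(|z_1|,\ldots,|z_n|)=N(z)$) both to place $T_r$ inside the $N$-ball and to identify the optimized denominator with $\delta_N(\alpha)$. Your route is shorter; the paper's route has the minor bonus of isolating the intermediate bound on the homogeneous parts $P_k$, which is not needed for the lemma but is a statement of independent interest. Your handling of possible zero coordinates in the maximizer is fine, and in fact the paper's proof implicitly sidesteps the same issue by taking a supremum over $r\in L$ rather than evaluating at a single maximizer.
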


\begin{proof}
The reasoning is taken from \cite{ronkin}. Let $P_k(z)=\sum_{|\alpha|=k} a_\alpha z^\alpha$ and assume that, for a fixed $k$, $C_k=\max\{|P_k(z)\,:\, N(z)=1\}$ is attained at $z=u$. Applying the Cauchy inequalities to the univariate function $g(w)=f(wu)=\sum_{k=0}^\infty P_k(u)w^k$, we get
\[
C_k \leq t^{-k}\max\{|g(w)|\;:\;|w|=t\} \leq t^{-k}M_N(f,t), \quad t>0.
\]
Now, the multivariate Cauchy inequalities applied to the polynomial $P_k$ give
\begin{multline*}
|a_\alpha|\leq r^{-\alpha} \max\{\left|P_k(u_1,\dots,u_n)\right|\;:\; |u_i|=r_i,\; i=1,\dots, n\}, \\ \quad r=(r_1,\dots,r_n), \quad r_i>0. 
\end{multline*}
We apply this inequality with 
\[
r\in L=\{(|u_1|, \dots, |u_n|)\;:\; N(u_1,\dots,u_n)\leq 1\}\subset\{N(u)\leq 1\},
\]
where the inclusion holds because $N$ is a PC-norm, thus arriving to
\[
|a_\alpha| r^\alpha \leq C_k, \quad |\alpha|=k, \quad r\in L.
\]
Inequality \eqref{eq:estimacoefpoweseries} is now obtained by passing to the supremum over $r$ in $L$ on the left hand term. 
\end{proof}
It is convenient to introduce the following spaces of entire functions. 
\begin{definition}
Let $N$ be a PC-norm on $\hh(\CC^m)$,  $\omega\geq 0$ and $A>0$. We denote by $E_\omega^m(A,N)$ the subspace of entire functions on $\CC^m$ for which there exists a constant $M$ such that
\[%
M_P(f,r) \leq M  \exp(Ar^\omega)), \quad r\in \RR.
\]
\end{definition}
There is an obvious connection between these spaces and the classical notions of order and type recalled above. We state it as a remark for future reference.
\begin{remark}\label{typeandspace}
Given $\omega >0$ and $\tau <\infty$, the following statements are equivalent.
\begin{enumerate}
\item $f\in \hh(\CC^m)$ is $\omega$-finite type $< \tau$ with respect to the norm $N$.
\item $f\in \cup_{A <\tau} E_\omega^m(A,N)$.
\end{enumerate}
\end{remark}
We define a norm on $E_\omega^m(A,N)$ 
by setting
\[%
\|f\|_{A,N}^\omega=\sup_{r>0} M_N(f,r)\exp(-Ar^\omega). 
\]
Observe that for all compact $K$ in $\CC^m$, there exists a constant $C(K)$ such 
that all $f\in E_\omega^m(A,N)$, 
\begin{equation}\label{eq:finer}
\|f\|_K\leq C(K) \|f\|_{A,N}^\omega,
\end{equation}
so that convergence with respect to the norm $\|\cdot\|_{A,N}^w$ implies uniform convergence on all compact subsets of $\CC^m$. From this, we deduce the following lemma whose standard proof is only sketched. 
\begin{lemma} 
When endowed with the norm $\|\cdot\|_{A,N}^w$, the space $E_w^m(A,N)$ is a Banach space.
\end{lemma}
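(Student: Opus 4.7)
The plan is to follow the standard completeness argument for weighted sup-norm spaces of entire functions, using the inequality \eqref{eq:finer} as the bridge between the weighted norm and uniform convergence on compacta.

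First I would verify that $\|\cdot\|_{A,N}^\omega$ is indeed a norm on $E_\omega^m(A,N)$, which is essentially immediate: finiteness is the defining condition of the space, positive homogeneity and the triangle inequality are inherited from the sup of the seminorms $f \mapsto M_N(f,r)\exp(-Ar^\omega)$, and separation follows because $M_N(f,0)=|f(0)|$ (or from \eqref{eq:finer} applied to any nontrivial compact $K$).

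Second, I would establish completeness. Take a Cauchy sequence $(f_n)$ in $E_\omega^m(A,N)$. By \eqref{eq:finer}, for every compact $K\subset\CC^m$ the restriction to $K$ is Cauchy in the sup-norm, so $(f_n)$ is Cauchy in $\hh(\CC^m)$ endowed with the topology of uniform convergence on compacta. Since $\hh(\CC^m)$ is complete for that topology, there exists $f\in\hh(\CC^m)$ with $f_n\to f$ uniformly on each compact subset.

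Third, I would check that $f\in E_\omega^m(A,N)$ and that the convergence holds for $\|\cdot\|_{A,N}^\omega$. Being Cauchy, $(f_n)$ is bounded: there exists $M$ with $M_N(f_n,r)\leq M\exp(Ar^\omega)$ for all $n$ and all $r\geq 0$. For each fixed $r$, uniform convergence of $f_n$ to $f$ on the compact ball $\{N(z)\leq r\}$ yields $M_N(f,r)\leq M\exp(Ar^\omega)$, hence $f\in E_\omega^m(A,N)$. For the norm convergence, fix $\epsilon>0$ and $n_0$ such that $\|f_n-f_k\|_{A,N}^\omega<\epsilon$ whenever $n,k\geq n_0$. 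For any $n\geq n_0$ and any $r>0$, uniform convergence on $\{N(z)\leq r\}$ gives $M_N(f_n-f_k,r)\to M_N(f_n-f,r)$ as $k\to\infty$, so
\[
M_N(f_n-f,r)\exp(-Ar^\omega)=\lim_{k\to\infty}M_N(f_n-f_k,r)\exp(-Ar^\omega)\leq\epsilon.
\]
Taking the supremum over $r>0$ gives $\|f_n-f\|_{A,N}^\omega\leq\epsilon$ for every $n\geq n_0$.

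There is no real obstacle here; the only point requiring any care is the interchange of limit and supremum over $r$, which is handled by first fixing $r$ (using continuity of $M_N(\cdot,r)$ on $\hh(\CC^m)$) and then passing to the supremum after the limit, as written above.
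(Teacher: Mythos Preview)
Your proof is correct and follows essentially the same approach as the paper: use \eqref{eq:finer} to pass from the Cauchy condition in $E_\omega^m(A,N)$ to a limit $f$ in $\hh(\CC^m)$, then verify membership and norm convergence by fixing $r$, using uniform convergence on the $N$-ball of radius $r$, and finally taking the supremum over $r$. The paper's proof is only sketched and handles the membership step with a slightly different triangle-inequality estimate, but the underlying argument is the same; your version is in fact more explicit, particularly for the norm-convergence step which the paper leaves to the reader.
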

\begin{proof}
Let $(f_n)$ be a Cauchy sequence in $E_w^m(A,N)$. In view of \eqref{eq:finer}, it is also a Cauchy sequence in $\hh(\CC^m)$ and therefore converges to an entire function $f$. To show that such $f$ is in $E_w^m(A,N)$, fix $r>0$, the uniform convergence on the $N$-ball of radius $r$ shows that there exists $n_0=n_0(r)$ such that $M_N(f-f_{n_0},r)\leq 1$ so that 
\[
M_N(f,r)\exp(-Ar^\omega)\leq M_N(f_{n_0}, r)\exp(-Ar^\omega)+\exp(-Ar^\omega)\leq \|f_{n_0}\|_{A,N}^\omega +1 \leq C,
\]
the latter since the sequence $\|f_{n}\|_{A,N}^\omega$, being itself Cauchy, is bounded. Passing to the supremum over $r$, we obtain that $f\in E_w^m(A,N)$. A similar reasoning shows that the convergence of $f_n$ to $f$ holds in $E_w^m(A,N)$.
\end{proof}
Likewise, since, for any functional $\mu$ (continuous linear form) on $\hh(\CC^m)$, there exist a compact $K$ and a constant $C(K)$ such that, for all $f\in\hh(\CC^m)$, $|\mu(f)|\leq C(K)\||f||_K$, the restriction of such a functional to $E_w^m(A,N)$ is continuous with its topology of Banach space. We therefore have :
\begin{lemma}\label{th:contothernorm}
Let $\Pi$ denote a polynomial projector on $\hh(\CC^m)$. 
The restriction of $\Pi$ to $E_w^m(A,N)$ is continuous for its topology of Banach space.
\end{lemma}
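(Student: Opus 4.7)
The plan is to exploit the finite rank of $\Pi$ together with the observation made immediately before the lemma that every functional continuous on $\hh(\CC^m)$ restricts to a continuous functional on $E_w^m(A,N)$.

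First, I would use the fact that, since $\Pi$ is a polynomial projector of degree $d$, both $\pol_d(\CC^m)$ and $\is(\Pi)$ are finite-dimensional. Pick a basis $\nu_1,\dots,\nu_N$ of $\is(\Pi)$ and a dual basis $q_1,\dots,q_N\in\pol_d(\CC^m)$ of polynomials characterized by $\nu_i(q_j)=\delta_{ij}$, so that for every $f\in\hh(\CC^m)$,
\[
\Pi(f)=\sum_{i=1}^N\nu_i(f)\,q_i.
\]

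Second, I would bound each of the two factors in this sum against $\|f\|_{A,N}^\omega$. On the coefficient side, the continuity of $\nu_i$ on $\hh(\CC^m)$ furnishes a compact set $K_i$ and a constant $c_i$ with $|\nu_i(f)|\le c_i\|f\|_{K_i}$, and combining with \eqref{eq:finer} yields
\[
|\nu_i(f)|\le c_iC(K_i)\|f\|_{A,N}^\omega, \quad f\in E_w^m(A,N).
\]
On the polynomial side, the growth bound $M_N(q_i,r)\le C_i'(1+r^d)$ is dominated by $\exp(Ar^\omega)$ as $r\to\infty$ (since $A,\omega>0$), so $\|q_i\|_{A,N}^\omega<\infty$.

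Finally, combining these by the triangle inequality gives
\[
\|\Pi(f)\|_{A,N}^\omega\le\sum_{i=1}^N|\nu_i(f)|\,\|q_i\|_{A,N}^\omega\le\Big(\sum_{i=1}^N c_iC(K_i)\|q_i\|_{A,N}^\omega\Big)\|f\|_{A,N}^\omega,
\]
which is the desired continuity with respect to the Banach-space topology. There is no serious obstacle here: the lemma is really a packaging of the finite-rank structure of $\Pi$ with the elementary fact that polynomials belong to every $E_w^m(A,N)$. The only mild point to keep in mind is invoking \eqref{eq:finer} to convert the $\|\cdot\|_{K_i}$-bound on the individual functionals into a bound involving the Banach norm.
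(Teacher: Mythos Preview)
Your proof is correct and follows the same route as the paper: the sentence preceding the lemma is the paper's entire argument, namely that every continuous linear form on $\hh(\CC^m)$ is bounded by a sup norm on some compact and hence, via \eqref{eq:finer}, by $\|\cdot\|_{A,N}^\omega$. You simply make explicit the finite-rank decomposition $\Pi(f)=\sum_i \nu_i(f)q_i$ and the (obvious) fact that each $q_i$ lies in $E_w^m(A,N)$, thereby obtaining boundedness of $\Pi$ as a map $E_w^m(A,N)\to E_w^m(A,N)$; the paper leaves these steps implicit and in its applications only needs the weaker continuity $E_w^m(A,N)\to\cc(K)$.
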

We will need to know the $\|\cdot\|_{A,N}^\omega$-norm of the monomial $e_\alpha : z \to z^\alpha$ 
in $\CC^m$.
\begin{lemma}\label{th:estdeltaalpha}
We have
\[\|e_\alpha\|_{A,N}^\omega = \delta_N(\alpha) \left(\dfrac{|\alpha|}{e\omega A}\right)^{|\alpha|/\omega},\]
where $\delta_N(\alpha)=M_N(e_\alpha,1)$ is defined in \eqref{eq:deltaN}. 
\end{lemma}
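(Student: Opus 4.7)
The plan is to reduce the computation of $\|e_\alpha\|_{A,N}^\omega$ to a one-variable calculus problem by first exploiting the homogeneity of the norm $N$ and then optimizing an explicit elementary function of $r$.

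First I would note that, since $N$ is a norm (hence homogeneous) on $\CC^m$, the change of variables $w = z/r$ gives
\[
M_N(e_\alpha, r) = \max_{N(z)\leq r} |z^\alpha| = r^{|\alpha|}\max_{N(w)\leq 1}|w^\alpha| = r^{|\alpha|}\,\delta_N(\alpha),
\]
using definition \eqref{eq:deltaN}. (The PC-property is not even needed here; it was used only in the Cauchy-type Lemma \ref{th:estimacoefpoweseries}.) Consequently,
\[
\|e_\alpha\|_{A,N}^\omega = \sup_{r>0}\, r^{|\alpha|}\,\delta_N(\alpha)\,\exp(-Ar^\omega) = \delta_N(\alpha)\cdot\sup_{r>0}\,g(r),
\]
where $g(r) = r^{|\alpha|}\exp(-Ar^\omega)$.

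Next I would differentiate $g$ to locate its unique positive critical point. A direct computation yields
\[
g'(r) = r^{|\alpha|-1}\exp(-Ar^\omega)\bigl(|\alpha| - A\omega r^\omega\bigr),
\]
which vanishes precisely at $r_\ast = \bigl(|\alpha|/(A\omega)\bigr)^{1/\omega}$. Since $g(0)=0$ and $g(r)\to 0$ as $r\to\infty$, this $r_\ast$ is the global maximum. Substituting it back gives $A r_\ast^{\omega} = |\alpha|/\omega$ and $r_\ast^{|\alpha|} = \bigl(|\alpha|/(A\omega)\bigr)^{|\alpha|/\omega}$, so
\[
g(r_\ast) = \left(\frac{|\alpha|}{A\omega}\right)^{|\alpha|/\omega}\exp\!\left(-\frac{|\alpha|}{\omega}\right) = \left(\frac{|\alpha|}{eA\omega}\right)^{|\alpha|/\omega},
\]
and multiplying by $\delta_N(\alpha)$ yields the stated identity.

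There is no real obstacle here: both steps (homogeneity reduction and single-variable optimization) are routine. The only minor point to check is that $r_\ast$ is indeed the global maximizer, which is immediate from the sign analysis of $g'$, and that the formula is still valid when $|\alpha|=0$ (in which case $g$ is maximized at $r=0$ and both sides equal $1$, using the convention $0^0 = 1$).
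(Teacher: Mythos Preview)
Your proof is correct and follows exactly the same approach as the paper: use the homogeneity of $e_\alpha$ to reduce to the one-variable function $t\mapsto t^{|\alpha|}\exp(-At^\omega)$, then locate its maximum at $t=(|\alpha|/(\omega A))^{1/\omega}$. You simply supply more detail (the derivative computation, the global-maximum check, and the $|\alpha|=0$ edge case) than the paper's two-line version.
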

\begin{proof}
By homogeneity of $e_\alpha$, we have
\[M_N(e_\alpha,t)\exp(-At^\omega)=M_N(e_\alpha, 1) t^{|\alpha|}\exp(-At^\omega)=\delta_N(\alpha)t^{|\alpha|}\exp(-At^\omega).\]
The claim follows by observing that the function $t\in \RR^+ \to t^{|\alpha|}\exp(-At^\omega)$ reaches its maximum at $t=(|\alpha|/(\omega A))^{1/\omega}$.
\end{proof}

\subsection{The convergence theorems}

As above, we work with $\CC^n=\CC^{n_1}\times \CC^{n_2}, \quad n_s\geq 1$ and let $N_s$ denote a PC-norm on $\CC^{n_s}$. 
Moreover, given $w>0$ and $A_s>0$ for $s=1,2$, we set
\[
E_s=E_{\omega}^{n_s}(A_s, N_s) \subset \hh(\CC^{n_s}), \quad s=1,2.
\] 
For the sake of simplicity, we will also write
\begin{equation}\label{eq:simplnotnorm}
\|\cdot \|_s=\|\cdot\|_{A_s,N_s}^\omega, \quad s=1,2.
\end{equation}
We first treat the case $0<\omega \leq 1$, the case $\omega \geq 1$, which is similar, is studied below. 

\begin{theorem}\label{th:convergentire} We use the notation above and assume $\omega\leq 1$. Let  $\mathcal{N}^s=(\Pi_0^s, \Pi_1^s, \dots)$ denote a Newton sequence on $\hh(\CC^{n_s})$,  $s=1,2$. Given $a_s>0$ for $s=1,2$, we define a norm $N$ on $\CC^n$ by setting
\begin{equation}\label{eq:defNormN}
N(z^1,z^2)=a_1N_1(z^1)+a_2N_2(z^2),\quad z^s\in\CC^{n_s}, \quad s=1,2.
\end{equation}
If $\mathcal{N}^s$ is converging on $E_s$ for $s=1,2$ then $\mathcal{N}^1\ntimes \mathcal{N}^2$ is converging on $E_\omega^n(A,N)$ provided that
\begin{equation}\label{eq:condonA}
 A < \min\left(\dfrac{A_1}{a_1^\omega}, \dfrac{A_2}{a_2^\omega}\right).
\end{equation} 
\end{theorem}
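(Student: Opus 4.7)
The plan is to execute the general strategy of Subsection \ref{sec:strategy}, taking as starting expansion \eqref{eq:strat1} the classical power series
\begin{equation*}
f(z^1,z^2) = \sum_{j\geq 0}\sum_{|\alpha|+|\beta|=j} c_{\alpha,\beta}(f)\, e_{\alpha}(z^1)\, e_{\beta}(z^2), \quad e_\gamma(z)=z^\gamma,
\end{equation*}
which converges to $f$ in $\hh(\CC^n)$. With $p^1_\alpha = e_\alpha$ and $p^2_\beta = e_\beta$ the setting of \eqref{eq:defpalphabeta} is met, and Lemma \ref{th:lemmabd} gives
\begin{equation*}
f - \Pi_d(f) = \sum_{j\geq d+1}\sum_{|\alpha|+|\beta|=j} c_{\alpha,\beta}(f) \sum_{(i_1,i_2)\in B(d,\alpha,\beta)} \pi^1_{i_1}(e_\alpha)\, \pi^2_{i_2}(e_\beta).
\end{equation*}
It then suffices to show that this triple sum tends to $0$ uniformly on every compact subset of $\CC^n$.

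For the Newton summands, Lemma \ref{th:contothernorm} yields that each $\Pi^s_d$ restricts to a continuous linear map $E_s \to \cc(K_s)$ for any compact $K_s\subset \CC^{n_s}$. The convergence of $\mathcal{N}^s$ on $E_s$ together with the Banach-Steinhaus theorem on the Banach space $E_s$ then produces a uniform bound $\|\pi^s_i(g)\|_{K_s} \leq \delta_s(K_s)\,\|g\|_s$ valid for all $g \in E_s$ and $i \geq 0$, exactly as in the proof of Corollary \ref{eq:corunfiboundedness}. Specialized to $g = e_\alpha$ and combined with Lemma \ref{th:estdeltaalpha}, this gives the monomial estimate
\[\|\pi^s_i(e_\alpha)\|_{K_s} \leq \delta_s(K_s)\, \delta_{N_s}(\alpha)\, \Bigl(\tfrac{|\alpha|}{e\omega A_s}\Bigr)^{|\alpha|/\omega}.\]

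For the coefficients, Lemma \ref{th:estimacoefpoweseries} applied to $f \in E_\omega^n(A,N)$ with $\|f\|_{A,N}^\omega \leq M$ and optimized at $t = (j/(\omega A))^{1/\omega}$ gives $|c_{\alpha,\beta}(f)| \leq M\,(e\omega A/j)^{j/\omega}/\delta_N(\alpha,\beta)$, where $j = |\alpha|+|\beta|$. A direct Lagrange-multiplier computation for $N = a_1 N_1 + a_2 N_2$, using the homogeneity of the monomials and the parametrization $s_s = a_s N_s(z^s)$ on $s_1 + s_2 = 1$, produces
\begin{equation*}
\delta_N(\alpha,\beta) = \frac{\delta_{N_1}(\alpha)\,\delta_{N_2}(\beta)\, |\alpha|^{|\alpha|}\, |\beta|^{|\beta|}}{a_1^{|\alpha|}\, a_2^{|\beta|}\, j^{j}}.
\end{equation*}
When the three bounds are multiplied, the $\delta_{N_s}$ factors cancel, the $(e\omega)^{j/\omega}$ factors collapse, and a reshuffling of powers yields
\begin{equation*}
|c_{\alpha,\beta}(f)|\, \|\pi^1_{i_1}(e_\alpha)\|_{K_1}\, \|\pi^2_{i_2}(e_\beta)\|_{K_2} \leq C\, \Bigl(\tfrac{Aa_1^\omega}{A_1}\Bigr)^{|\alpha|/\omega}\!\Bigl(\tfrac{Aa_2^\omega}{A_2}\Bigr)^{|\beta|/\omega}\, \Theta(\alpha,\beta),
\end{equation*}
with $C$ depending on $M, \delta_1(K_1), \delta_2(K_2)$ and $\Theta(\alpha,\beta) = (j/|\alpha|)^{|\alpha|(1-1/\omega)}(j/|\beta|)^{|\beta|(1-1/\omega)}$.

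The hypothesis $\omega \leq 1$ enters at precisely this stage: since $1-1/\omega \leq 0$ and $j/|\alpha|, j/|\beta| \geq 1$, one has $\Theta(\alpha,\beta) \leq 1$. Coupled with the polynomial bound $\mathrm{card}\,B(d,\alpha,\beta)\leq (j+1)^2$ from \eqref{eq:escardBd} and the $O(j^{n-1})$ count of $(\alpha,\beta)$ with $|\alpha|+|\beta|=j$, the assumption $A < \min(A_1/a_1^\omega, A_2/a_2^\omega)$ turns the remaining sum into a convergent geometric series in $j$, whose tail starting at $j=d+1$ tends to $0$ as $d\to\infty$. This gives $\|f-\Pi_d(f)\|_{K_1\times K_2}\to 0$ for every pair of compacts, hence convergence in $\hh(\CC^n)$. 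The main obstacle is the algebraic bookkeeping just described: the sharp forms of $\|e_\alpha\|_s$ and of $\delta_N(\alpha,\beta)$ both carry Stirling-type combinatorial factors $j^j/(|\alpha|^{|\alpha|}|\beta|^{|\beta|})$ that must recombine into exactly the announced exponential rate, and it is precisely the additive form $N = a_1 N_1 + a_2 N_2$ (together with the PC-norm condition) that makes this clean cancellation possible.
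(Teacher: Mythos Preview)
Your argument is correct and follows essentially the same route as the paper's own proof: power series expansion as the starting \eqref{eq:strat1}, Banach--Steinhaus on the Banach spaces $E_s$ to bound the Newton summands, Lemma~\ref{th:estdeltaalpha} for $\|e_\alpha\|_s$, Lemma~\ref{th:estimacoefpoweseries} optimized in $t$ for the coefficients, and the explicit formula for $\delta_N(\alpha,\beta)$ (which the paper records as Lemma~\ref{th:estdeltaN}) to obtain the clean cancellation leading to the ratios $(Aa_s^\omega/A_s)^{1/\omega}$ and the residual factor $\Theta\leq 1$ when $\omega\leq 1$. The only cosmetic difference is that the paper packages the computation of $\delta_N(\alpha,\beta)$ as a separate lemma and organizes the estimates into numbered steps, whereas you fold the same calculations into the running text.
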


The result will be applied in the following form. 

\begin{corollary}\label{th:cor1conentire} Let $0<\omega\leq 1$ and $\tau_s>0$, $s=1,2$. If $\mathcal{N}^s$ is converging on the space of functions of $\omega$-type $< \tau_s$ with respect to $N_s$ in $\hh(\CC^{n_s})$, for $s=1,2$ then $\mathcal{N}^1\ntimes \mathcal{N}^2$ is converging on the space of functions of $\omega$-type $ < 1$ with respect to 
\[
N=\tau_1^{1/\omega} N_1 + \tau_2^{1/\omega} N_2.
\]
\end{corollary}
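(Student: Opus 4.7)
The plan is to deduce the corollary directly from Theorem \ref{th:convergentire} by a careful choice of the free parameters $a_1, a_2, A_1, A_2$. First I would set $a_s = \tau_s^{1/\omega}$ for $s=1,2$, so that the norm $N$ defined in \eqref{eq:defNormN} becomes exactly $N = \tau_1^{1/\omega} N_1 + \tau_2^{1/\omega} N_2$, the norm appearing in the statement of the corollary. With this choice, $a_s^\omega = \tau_s$, so condition \eqref{eq:condonA} reads $A < \min(A_1/\tau_1, A_2/\tau_2)$.

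Next I would translate everything into the language of the spaces $E_\omega^m(A, N)$ by means of Remark \ref{typeandspace}. Namely, the assumption that $\mathcal{N}^s$ is converging on the space of functions of $\omega$-type $< \tau_s$ with respect to $N_s$ is equivalent to the statement that $\mathcal{N}^s$ is converging on $E_\omega^{n_s}(A_s, N_s)$ for every $A_s < \tau_s$. Likewise, the conclusion we want---that $\mathcal{N}^1 \ntimes \mathcal{N}^2$ converges on the space of functions of $\omega$-type $< 1$ with respect to $N$---amounts to showing that $\mathcal{N}^1 \ntimes \mathcal{N}^2$ converges on $E_\omega^n(A,N)$ for every $A < 1$.

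Fix then any $A < 1$ and any $f \in E_\omega^n(A, N)$. To apply Theorem \ref{th:convergentire} it suffices to exhibit $A_1, A_2$ with $A_s < \tau_s$ and $A < \min(A_1/\tau_1, A_2/\tau_2)$, i.e., $A\tau_s < A_s < \tau_s$ for $s=1,2$. Because $A < 1$ we have $A\tau_s < \tau_s$, so such $A_s$ can always be picked (e.g.\ $A_s = \frac{1+A}{2}\tau_s$). With this choice the hypotheses of Theorem \ref{th:convergentire} are met, and the conclusion follows.

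The only (mild) subtlety is to keep the exponents straight: the scaling \eqref{eq:typelambdaN} of $\omega$-type under dilation of a norm is what forces the choice $a_s = \tau_s^{1/\omega}$ rather than $a_s = \tau_s$; once that is seen, the argument is just bookkeeping and there is no genuine obstacle.
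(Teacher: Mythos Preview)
Your proposal is correct and follows essentially the same route as the paper's own proof: set $a_s=\tau_s^{1/\omega}$ so that $N$ is the desired norm, use Remark~\ref{typeandspace} to reduce to checking convergence on $E_\omega^n(A,N)$ for each $A<1$, and then pick $A_s\in(A\tau_s,\tau_s)$ so that condition~\eqref{eq:condonA} holds. The paper makes the choice $A_s=\tau_s-\delta$ with $\delta/\tau_s<1-A$, while you take $A_s=\tfrac{1+A}{2}\tau_s$; both select a point in the same open interval and the arguments are otherwise identical.
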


\begin{proof}
According to Remark \ref{typeandspace}, we need to show that for all $0<A<1$, $\mathcal{N}^1\ntimes \mathcal{N}^2$ is converging on $E^n_{\omega}(A,N)$. 
Let $\epsilon=1-A$ and fix $\delta > 0 $ such that $\delta/\tau_s<\epsilon$ and 
$A_s=\tau_s-\delta< \tau_s$, $s=1,2$.  Observe that $E_\omega^{n_s}(A_s, N_s)$ is included in the space of entire functions of $\omega$-type $<\tau_s$ so that $\mathcal{N}_s$ is $\hh(\CC^n)$-converging on $E_\omega^{n_s}(A_s, N_s)$ and the assumptions of Theorem 
\ref{th:convergentire} are satisfied. We now apply it with $a_s=\tau_s^{1/\omega}$, $s=1,2$ and it remains to check that Condition \eqref{eq:condonA} is satisfied. This is clear since 
\[
A=1-\epsilon  < (\tau_s-\delta)/a_s^\omega=1-\delta/\tau_s, \quad s=1,2.
\]
The theorem therefore implies convergence on $E^n_{\omega}(A,N)$ 
as required. 
\end{proof}

\begin{corollary}[Case $\tau_1=\tau_2$]\label{th:cor1conentire2} Let $0<\omega\leq 1$ and $\tau>0$. If $\mathcal{N}^s$ is converging on the space of functions of $\omega$-type $< \tau$ with respect to $N_s$ in $\hh(\CC^{n_s})$, for $s=1,2$ then $\mathcal{N}^1\ntimes \mathcal{N}^2$ is converging on the space of functions of $\omega$-type $ < \tau$ with respect to 
$N=N_1+N_2$.
\end{corollary}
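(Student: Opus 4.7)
The plan is to deduce this corollary directly from Corollary \ref{th:cor1conentire} by specialization to $\tau_1=\tau_2=\tau$ followed by a rescaling of the norm. First I would apply Corollary \ref{th:cor1conentire} with this common value, which immediately yields that $\mathcal{N}^1 \ntimes \mathcal{N}^2$ is converging on the space of entire functions of $\omega$-type $<1$ with respect to the norm
\[
N' := \tau^{1/\omega} N_1 + \tau^{1/\omega} N_2 = \tau^{1/\omega}(N_1 + N_2).
\]

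The remaining task is to identify this space with the target space in the statement, namely, entire functions of $\omega$-type $<\tau$ with respect to $N := N_1 + N_2$. I would verify this by a direct use of the scaling behavior of the $\omega$-type under a dilation of the norm. Since $M_{\lambda N}(f,r)=M_N(f,r/\lambda)$ for $\lambda>0$ (immediate from the definition of $M_N$), the substitution $s=r/\lambda$ in the $\limsup$ defining $\tau(\cdot,\omega,\cdot)$ yields
\[
\tau(f,\omega,\lambda N) = \lambda^{-\omega}\,\tau(f,\omega,N).
\]
Applied with $\lambda=\tau^{1/\omega}$ and $N=N_1+N_2$, this gives $\tau(f,\omega,N')=\tau^{-1}\tau(f,\omega,N_1+N_2)$, so $f$ has $\omega$-type $<\tau$ with respect to $N_1+N_2$ if and only if $f$ has $\omega$-type $<1$ with respect to $N'$. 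The two spaces of entire functions therefore coincide, and the convergence statement transfers from one to the other at no cost.

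In short, all the analytic content sits in Corollary \ref{th:cor1conentire}; the only point to handle here is the homogeneity of the $\omega$-type under dilation of the norm, which is a one-line change of variables. For this reason I expect no genuine obstacle: the symmetric case $\tau_1=\tau_2$ is essentially a cosmetic rewriting of Corollary \ref{th:cor1conentire} in the more natural normalization $N_1+N_2$.
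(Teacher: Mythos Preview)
Your proof is correct and follows exactly the paper's approach: apply Corollary \ref{th:cor1conentire} with $\tau_1=\tau_2=\tau$ and then rescale via the homogeneity of the $\omega$-type under dilation of the norm. Your derived scaling law $\tau(f,\omega,\lambda N)=\lambda^{-\omega}\tau(f,\omega,N)$ is in fact the correct one (the paper's displayed relation \eqref{eq:typelambdaN} has $1/\lambda$, which is a typo for $\lambda^{-\omega}$; your computation with $\lambda=\tau^{1/\omega}$ is exactly what is needed).
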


\begin{proof} Use the previous corollary taking \eqref{eq:typelambdaN} into account.
\end{proof}
Note that the result is optimal if the assumption is. That is to say if there exists a function $f$ of $\omega$-type $=\tau$ with respect to $N_1$ for which $\Pi^1_d(f)$ does not converge to $f$ in $\hh(\CC^{n_1})$ then the same function regarded as a function on $\CC^{n}$ provides a function of $\omega$-type $=\tau$ with respect to $N$ for which convergence does not occur.  
\par \smallskip
A careful examination of the proof of Theorem \ref{th:convergentire} below would show that a version of the theorem holds true when the spaces $E_s$ are not defined with identical $\omega$ but rather with $\omega_s$, $s=1,2$. With, say, $\omega_1<\omega_2$,  Condition \eqref{eq:condonA} would be then changed to
\[
A<\dfrac{A_1}{a_1^\omega}\quad\text{and}\quad A < \dfrac{(e\omega_2 A_2)^{\omega_2/\omega_1}}{e\omega_1 a_2^{\omega_1}}.
\]
Such a condition is not satisfying since it does not ensure suitable convergence for all functions depending only on the group of variables $z^2$, while this convergence follows immediately from the definition of the Newton product. It seems that, when $\omega_1\neq \omega_2$, a satisfactory statement cannot be obtained without the use of a notion of partial type. We will not discuss this approach in this paper. 
\par\smallskip Given $\alpha_s\in \NN^{n_s}$, $s=1,2$, we form $\alpha=(\alpha^1,\alpha^2)\in\NN^n$ . We need to compute  $\delta_N(\alpha^1,\alpha^2)$ in terms of $\delta_{N_1}(\alpha^1)$ and $\delta_{N_2}(\alpha^2)$ where $N$ is as in \eqref{eq:defNormN}.

\begin{lemma}\label{th:estdeltaN} With the notation of the theorem, 
\[
\delta_N(\alpha)=\dfrac{|\alpha^1|^{|\alpha^1}|\alpha^2|^{|\alpha^2|}}{|\alpha|^{|\alpha|}}\;\cdot\; \dfrac{\delta_{N_1}(\alpha^1)\delta_{N_2}(\alpha^2)}{{a_1}^{|\alpha^1|}{a_2}^{|\alpha^2|}}.
\]
\end{lemma}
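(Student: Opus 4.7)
My plan is to compute $\delta_N(\alpha)$ by decoupling the optimization over the two blocks of variables using the homogeneity of $N_1$ and $N_2$, reducing to a two-variable calculus problem on the simplex $\{a_1 u + a_2 v \le 1,\; u, v \ge 0\}$.

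First I would parametrize. Write $z = (z^1, z^2)$ with $z^s \in \CC^{n_s}$ and set $u = N_1(z^1)$, $v = N_2(z^2)$. Since $N_1$ and $N_2$ are norms (hence positively homogeneous of degree one), for every fixed $u \ge 0$,
\[
\max_{N_1(z^1) \le u} |(z^1)^{\alpha^1}| = u^{|\alpha^1|}\, \delta_{N_1}(\alpha^1),
\]
and analogously for $z^2$. Because $|z^{\alpha}| = |(z^1)^{\alpha^1}|\cdot|(z^2)^{\alpha^2}|$ factors, and because the constraint $N(z^1, z^2) = a_1 u + a_2 v \le 1$ couples the two factors only through $u$ and $v$, I get
\[
\delta_N(\alpha) = \delta_{N_1}(\alpha^1)\, \delta_{N_2}(\alpha^2) \cdot M,
\]
where $M = \max\bigl\{ u^{|\alpha^1|} v^{|\alpha^2|} : u, v \ge 0,\; a_1 u + a_2 v \le 1\bigr\}$.

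Next I would solve the scalar optimization. Setting $k_1 = |\alpha^1|$, $k_2 = |\alpha^2|$, $k = k_1 + k_2$, and changing variables $x = a_1 u$, $y = a_2 v$, the problem becomes
\[
M = \frac{1}{a_1^{k_1} a_2^{k_2}} \max\bigl\{ x^{k_1} y^{k_2} : x + y = 1,\; x, y \ge 0 \bigr\},
\]
where I have used that the maximum is attained on the boundary $x + y = 1$ (the function is increasing in each variable). Substituting $y = 1 - x$, elementary calculus gives a unique critical point at $x = k_1/k$, $y = k_2/k$, with maximal value $k_1^{k_1} k_2^{k_2}/k^{k}$ (using the convention $0^0 = 1$ to cover the trivial cases $k_1 = 0$ or $k_2 = 0$). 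Plugging back yields
\[
M = \frac{|\alpha^1|^{|\alpha^1|}\, |\alpha^2|^{|\alpha^2|}}{|\alpha|^{|\alpha|}\, a_1^{|\alpha^1|}\, a_2^{|\alpha^2|}},
\]
which combined with the previous identity gives exactly the claimed formula.

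There is no genuine obstacle: the only point demanding a little care is the reduction step, which relies on the fact that the constraint $N(z) \le 1$ depends on $z^1$ and $z^2$ only through the norms $N_1(z^1)$ and $N_2(z^2)$. This is automatic from the definition \eqref{eq:defNormN} of $N$; note in particular that we do not need the PC-norm hypothesis here, because $\delta_N$ is defined via an unsigned supremum and the homogeneity of $|(z^s)^{\alpha^s}|$ in $|z^s_i|$ lets us replace coordinates by their moduli before taking the supremum. The boundary case where one of the $\alpha^s$ is zero is handled by the standard convention $0^0 = 1$.
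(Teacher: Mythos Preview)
Your proof is correct and follows essentially the same approach as the paper: factor out $\delta_{N_1}(\alpha^1)\delta_{N_2}(\alpha^2)$ using the homogeneity of the partial norms, then solve the resulting one-variable calculus problem on the segment $a_1r_1+a_2r_2=1$. Your write-up is in fact slightly more careful than the paper's (you note that the maximum is attained on the boundary, handle the degenerate case $|\alpha^s|=0$ via $0^0=1$, and observe that the PC-norm hypothesis is not needed for this particular lemma), but the argument is the same.
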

\begin{proof} In view of \eqref{eq:deltaN}, we have
\[
\delta_N(\alpha^1,\alpha^2)=\max\left\{|u_1|^{|\alpha^1|}|u_2|^{|\alpha^2|}\;:\; a_1N_1(u_1)+a_2N_2(u_2)=1\right\}.
\]
Setting $u_s=r_sv_s$ with $N_s(v_s)=1$, $s=1,2$, the above relation translates into
\[
\delta_N(\alpha^1,\alpha^2)=\delta_{N_1}(\alpha^1)\delta_{N_2}(\alpha^2) \max\left\{ 
r_1^{|\alpha^1|}r_2^{|\alpha^2|}\; :\; a_1r_1+a_2r_2=1
\right\}.
\]
Now, it is readily seen that the function
\[
r_1\in [0,1/a_1] \longrightarrow r_1^{|\alpha^1|}\left(\dfrac{1-a_1r_1}{a_2}\right)^{|\alpha^2|}
\]
reaches its maximum at 
\[
r_1=\dfrac{1}{a_1} \;\cdot\; \dfrac{|\alpha^1|}{|\alpha|},
\]
from wich the lemma immediately follows. 
\end{proof} 
We turn to the proof of Theorem \ref{th:convergentire}. 
\begin{proof}
Using the notation \eqref{eq:defpronewseq}, we need to show that for any $f\in E_\omega^n(A,N)$,
$\Pi_d(f)$ converges to $f$ in $\hh(\CC^n)$ as $d\to \infty$, that is, $\Pi_d(f)$ converges to $f$ uniformly on every compact subset of $\CC^n$. We fix such a $f$ and $K=K_1\times K_2$ a compact set in $\CC^n$ with $K_s\in\CC^{n_s}$ and prove that $\|f-\Pi_d(f)\|_K\to 0$ as $d \to \infty$.
\begin{step}[Banach-Steinhaus] In view of Lemma \ref{th:contothernorm}, $\Pi^s_d : E_s \to \cc(K_s)$ is a continuous linear map (for the Banach space topologies) and, from the assumption on $\mathcal{N}^s$, for all $f_s$ in $E_s$, $\Pi^s_d(f_s)$ converges to ${f_s}_{|K_s}$. Hence, by the Banach-Steinhaus Theorem, there exists a constant $\Gamma_s$ such that, see \eqref{eq:simplnotnorm} for the notation,
\begin{equation}
\|\Pi^s_d(f_s)\|_{K_s}\leq \Gamma_s\|f_s\|_s, \quad d\in \NN, \quad f_s\in E_s, \quad  s=1,2.
\end{equation}  
Likewise, using the Newton summands $\pi_d^s=\Pi^s_d-\Pi^s_{d-1}$, just as in Corollary  \ref{eq:corunfiboundedness}, 
\begin{equation}\label{eq:bsNewtonsum}
\|\pi^s_d(f_s)\|_{K_s}\leq C_s\|f_s\|_s, \quad d\in \NN, \quad f_s\in E_s, \quad  s=1,2,
\end{equation}
with $C_s=2\Gamma_s$.
\end{step}
\begin{step}[Using the strategy described in Subsection \ref{sec:strategy}] Writing $e^s_\alpha: z^s\in \CC^{n_s}\to z^{\alpha}$, we start from the power series expansion of $f$, 
\[ f(z^1,z^2)= \sum_{j=0}^\infty\sum_{|\alpha^1|+|\alpha^2|=j} c_{(\alpha^1,\alpha^2)} e_{\alpha^1}(z^1) e_{\alpha^2}(z^2).\]
A use of \eqref{eq:strat2} and Lemma \ref{th:lemmabd}  yields 
\[f-\Pi_d(f)=\sum_{j=d+1}^\infty\sum_{|\alpha^1|+|\alpha^2|=j} c_{\alpha^1\alpha^2} 
\left\{\sum_{(i_1,i_2)\in B(d,\alpha^1,\alpha^2)} \pi_{i_1}^1(e_{\alpha^1}) \pi_{i_2}^2(e_{\alpha^2}) \right\}.\]
We will denote by $R_{\alpha^1,\alpha^2}$ the sum into brackets in the above equation. 
We will prove that the series
\[%
\sum_{j=0}^\infty\sum_{|\alpha^1|+|\alpha^2|=j} \left|c_{\alpha^1\alpha^2} \right|
\|R_{\alpha^1,\alpha^2}\|_K
\]
is converging and $\|f-\Pi_d(f)\|_K$ will therefore go to $0$ as it is bounded by the remainder of a converging series. 
\end{step}
\begin{step}[Estimating $\|R_{\alpha^1,\alpha^2}\|_K$]
Using \eqref{eq:bsNewtonsum} in the above equation for $R_{\alpha^1,\alpha^2}$, we obtain 
\begin{align}
\|R_{\alpha^1,\alpha^2}\|_K & \leq C_1C_2 
\text{card}\left(B(d,\alpha^1,\alpha^2)\right) \|e_{\alpha^1}\|_1 \; \|e_{\alpha^2}\|_2 \\
\intertext{ and, using the estimate \eqref{eq:escardBd} for $\text{card}\left(B(d,\alpha^1,\alpha^2)\right)$,}
& \leq 
C_1C_2 
(|\alpha^1|+1)(|\alpha^2|+1)\|e_{\alpha^1}\|_1 \; \|e_{\alpha^2}\|_2
\end{align}
Now a use of Lemma \ref{th:estdeltaalpha} gives
\begin{multline}\label{eq:finalR}
\|R_{\alpha^1,\alpha^2}\|_K \leq C_1C_2 
(|\alpha^1|+1)(|\alpha^2|+1) \delta_{N_1}(\alpha^1)\delta_{N_2}(\alpha^2) \\
\times \left(\dfrac{|\alpha^1|}{e\omega A_1}\right)^{\frac{|\alpha^1|}{\omega}}\left(\dfrac{|\alpha^2|}{e\omega A_2}\right)^{\frac{|\alpha^2|}{\omega}}.
\end{multline}
\end{step}
\begin{step}[Estimating $\left|c_{\alpha^1\alpha^2} \right|$] Since $N_s$ is a PC-norm for $s=1,2$ so is the norm $N$ and we may therefore apply estimate \eqref{eq:estimacoefpoweseries} in Lemma \ref{th:estimacoefpoweseries} to get 
\begin{equation}%
\left|c_{\alpha^1\alpha^2} \right|  \leq \dfrac{t^{-|\alpha|}}{\delta_{N}(\alpha)}M_N(f,t)
\leq M \dfrac{t^{-|\alpha|}}{\delta_{N}(\alpha)} e^{tA}, \quad \alpha=(\alpha^1,\alpha^2),\quad t\geq 0,
\end{equation}%
where we used $f\in E_\omega(A,N)$. Since, as in the proof of Lemma \ref{th:estdeltaalpha},  the function $t\to t^{-|\alpha|}e^{t^\omega A}$ reaches its minimum for $t=(|\alpha|/(\omega A))^{1/\omega}$, we have
\[%
\left|c_{\alpha^1\alpha^2} \right| \leq
M \left(\dfrac{|\alpha|}{e\omega A}\right)^{-|\alpha|/\omega}\cdot\dfrac{1}{\delta_N(\alpha)}. 
\]%
Next, we use Lemma \ref{th:estdeltaN} to handle the term $\delta_{N}(\alpha)$ and, after some simple calculation, we arrive to
\begin{equation}\label{eq:finalcoef}
\left|c_{\alpha^1\alpha^2} \right| \leq M
\left(\dfrac{1}{e\omega A}\right)^{-|\alpha|/\omega}\cdot |\alpha|^{|\alpha|-1/\omega}
\cdot \dfrac{{a_1}^{|\alpha^1|}{a_2}^{|\alpha^2|}}{|\alpha^1|^{|\alpha^1|}|\alpha^2|^{|\alpha^2|}\delta_{N_1}(\alpha^1)\delta_{N_2}(\alpha^2)}.
\end{equation}
\end{step}
\begin{step}[Conclusion] Putting \eqref{eq:finalR} and \eqref{eq:finalcoef} together, setting $C=MC_1C_2$, we obtain 
\begin{multline*}
\left|c_{\alpha^1\alpha^2} \right|  \|R_{\alpha^1,\alpha^2}\|_K  \leq C (|\alpha^1|+1)
(|\alpha^2|+1) \\ \cdot \left(\dfrac{a_1^{\omega}A}{A_1}\right)^{|\alpha^1|/\omega}
\left(\dfrac{a_2^{\omega}A}{A_2}\right)^{|\alpha^2|/\omega} 
\times \left\{\dfrac{|\alpha^1|^{|\alpha^1|}|\alpha^2|^{|\alpha^2|}}{|\alpha|^{|\alpha|}}\right\}^{\frac{1}{\omega}-1}.
\end{multline*}
The terms into brackets is obviously smaller than one
(since $|\alpha^s|\leq |\alpha|$), hence we have 
\[
\left|c_{\alpha^1\alpha^2} \right|  \|R_{\alpha^1,\alpha^2}\|_K  \leq M (|\alpha|+1)^2 q_1^{|\alpha^1|}q_2^{|\alpha^2|}\quad\text{with}\quad q_s=\left(\dfrac{a_s^{1/\omega}A}{A_s}\right)^{1/\omega} <1,
\]
the latter by assumption \eqref{eq:condonA}. The convergence of the series of general term $c_{\alpha^1\alpha^2} R_{\alpha^1,\alpha^2}$ follows and according to the second step, this concludes the proof of the theorem.  
\end{step}
\end{proof}
We will now deal with the case $\omega \geq 1$. 
\begin{theorem}\label{th:convergentire2} We use the notation above and assume $\omega \geq 1$. Let  $\mathcal{N}^s=(\Pi_0^s, \Pi_1^s, \dots)$ denotes a Newton sequence on $\hh(\CC^{n_s})$,  $s=1,2$. Given $a_s>0$ for $s=1,2$, we define a norm $N$ on $\CC^n$ by
\begin{equation}\label{eq:defNormN2}
N(z^1,z^2)=\left(a_1N_1^\omega(z^1)+a_2N_2^\omega(z^2)\right)^{1/\omega},\quad z^s\in\CC^{n_s}, \quad s=1,2.
\end{equation}
If $\mathcal{N}^s$ is converging on $E_s$ for $s=1,2$ then $\mathcal{N}^1\ntimes \mathcal{N}^2$ is converging on $E_\omega^n(A,N)$ provided that
\begin{equation}\label{eq:condonA2}
 A < \min\left(\dfrac{A_1}{a_1}, \dfrac{A_2}{a_2}\right).
\end{equation} 
\end{theorem}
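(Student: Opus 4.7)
The plan is to follow, step by step, the same five-stage strategy used for Theorem \ref{th:convergentire}: a Banach--Steinhaus bound for the Newton summands $\pi^s_d$ on $E_s$ as in \eqref{eq:bsNewtonsum}, followed by the power series expansion of $f\in E_\omega^n(A,N)$, then an application of Lemma \ref{th:lemmabd} to the partial sums, and finally a separate control of $\|R_{\alpha^1,\alpha^2}\|_K$ and of the coefficients $|c_{\alpha^1\alpha^2}|$ as in \eqref{eq:finalR} and \eqref{eq:finalcoef}. The estimate on $\|R_{\alpha^1,\alpha^2}\|_K$ does not involve the norm $N$ on $\CC^n$ at all, only $N_1$ and $N_2$, so it carries over verbatim from the previous proof. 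What changes is the coefficient estimate, which depends on the choice of $N$.

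The key calculation is therefore to recompute the quantity $\delta_N(\alpha)$ for the new norm defined in \eqref{eq:defNormN2}. Proceeding exactly as in Lemma \ref{th:estdeltaN}, write $u_s=r_s v_s$ with $N_s(v_s)=1$, so that
\[
\delta_N(\alpha)=\delta_{N_1}(\alpha^1)\delta_{N_2}(\alpha^2)\max\left\{r_1^{|\alpha^1|}r_2^{|\alpha^2|}\;:\;a_1r_1^\omega+a_2r_2^\omega=1\right\}.
\]
Setting $s_i=r_i^\omega$ reduces the optimization to maximizing $s_1^{|\alpha^1|/\omega}s_2^{|\alpha^2|/\omega}$ under the linear constraint $a_1s_1+a_2s_2=1$, a standard Lagrange multiplier problem whose solution yields
\[
\delta_N(\alpha)=\delta_{N_1}(\alpha^1)\delta_{N_2}(\alpha^2)\;\cdot\;\dfrac{|\alpha^1|^{|\alpha^1|/\omega}|\alpha^2|^{|\alpha^2|/\omega}}{|\alpha|^{|\alpha|/\omega}}\;\cdot\;\dfrac{1}{a_1^{|\alpha^1|/\omega}a_2^{|\alpha^2|/\omega}}.
\]
Plugging this into the coefficient estimate derived from Lemma \ref{th:estimacoefpoweseries} (minimizing the right hand side in $t$ exactly as in the proof of \eqref{eq:finalcoef}) gives
\[
|c_{\alpha^1\alpha^2}|\leq M(e\omega A)^{|\alpha|/\omega}\cdot\dfrac{a_1^{|\alpha^1|/\omega}a_2^{|\alpha^2|/\omega}}{\delta_{N_1}(\alpha^1)\delta_{N_2}(\alpha^2)|\alpha^1|^{|\alpha^1|/\omega}|\alpha^2|^{|\alpha^2|/\omega}}.
\]

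The point of choosing the norm $N$ as in \eqref{eq:defNormN2} rather than as the weighted sum \eqref{eq:defNormN} is precisely that the factors $|\alpha^s|^{|\alpha^s|/\omega}$ appearing here now cancel \emph{exactly} against the corresponding factors in \eqref{eq:finalR}, with no residual term of the form $\{|\alpha^1|^{|\alpha^1|}|\alpha^2|^{|\alpha^2|}/|\alpha|^{|\alpha|}\}^{1/\omega-1}$. Since in the present case $1/\omega-1\leq 0$, such a residual term would be $\geq 1$ and would break the argument, which is exactly why the earlier proof could not be reused as is. After cancellation one obtains
\[
|c_{\alpha^1\alpha^2}|\,\|R_{\alpha^1,\alpha^2}\|_K\leq M C_1C_2(|\alpha^1|+1)(|\alpha^2|+1)\,q_1^{|\alpha^1|}q_2^{|\alpha^2|},\quad q_s=\left(\dfrac{Aa_s}{A_s}\right)^{1/\omega},
\]
and assumption \eqref{eq:condonA2} ensures $q_1,q_2<1$, so that the double series $\sum_{\alpha^1,\alpha^2}|c_{\alpha^1\alpha^2}|\,\|R_{\alpha^1,\alpha^2}\|_K$ converges and $\|f-\Pi_d(f)\|_K\to 0$ as the tail of a convergent series.

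The main (essentially only) obstacle is the optimization computing $\delta_N(\alpha)$ for the $\ell^\omega$-type combination in \eqref{eq:defNormN2}; everything else is a direct transcription of the previous proof. The reward for this computation is the clean cancellation of the $|\alpha^s|^{|\alpha^s|/\omega}$ factors, which is precisely what allows the case $\omega\geq 1$ to be handled with no worse a condition on $A$ than in \eqref{eq:condonA}.
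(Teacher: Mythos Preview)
Your proposal is correct and follows essentially the same approach as the paper: the paper states that the proof of Theorem \ref{th:convergentire2} is identical to that of Theorem \ref{th:convergentire} up to Step 4, where the estimate of Lemma \ref{th:estdeltaN} is replaced by that of Lemma \ref{th:estdeltaN2} (which you rederive inline), leading to the same final bound $|c_{\alpha^1\alpha^2}|\,\|R_{\alpha^1,\alpha^2}\|_K \leq M(|\alpha^1|+1)(|\alpha^2|+1)(a_1A/A_1)^{|\alpha^1|/\omega}(a_2A/A_2)^{|\alpha^2|/\omega}$. Your additional remark explaining \emph{why} the $\ell^\omega$-type norm \eqref{eq:defNormN2} is needed---namely that the residual factor $\{|\alpha^1|^{|\alpha^1|}|\alpha^2|^{|\alpha^2|}/|\alpha|^{|\alpha|}\}^{1/\omega-1}$ would be $\geq 1$ when $\omega\geq 1$---is a nice clarification that the paper leaves implicit.
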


Thus the mains changes are the definition of the norm $N$ and the assumptions on $A$ in which $\omega$ disappear. It is worth noting that the definition of $N$ and the conditions on $A$ in both theorems are continuous with respect to $\omega$ around 
$\omega=1$. Note also that the condition $\omega \geq 1$ ensures that equation 
\eqref{eq:defNormN2} defines a norm. It is obviously a PC-norm as soon as the $N_s$ are.
\par\smallskip
\par Corollaries \ref{th:cor1conentire} and \ref{th:cor1conentire2} remain true with obvious changes. We will not state them. 
\par The change in the definition on $N$ requires a modification of Lemma \ref{th:estdeltaN}.

\begin{lemma}\label{th:estdeltaN2} With the notation of Theorem \ref{th:convergentire2}, in particular with $N$ as in \eqref{eq:defNormN2}, we have 
\[
\delta_N(\alpha)=\left(\dfrac{|\alpha^1|^{|\alpha^1}|\alpha^2|^{|\alpha^2|}}{
|\alpha|^{|\alpha|}{a_1}^{|\alpha^1|}{a_2}^{|\alpha^2|}}\right)^{1/ \omega}\;\cdot\;\delta_{N_1}(\alpha^1)\delta_{N_2}(\alpha^2)
\]
\end{lemma}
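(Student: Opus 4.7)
The plan is to imitate the proof of Lemma \ref{th:estdeltaN}, with a change of variable that linearises the new constraint. By definition
\[
\delta_N(\alpha^1,\alpha^2)=\max\left\{|u_1|^{|\alpha^1|}|u_2|^{|\alpha^2|}\;:\; a_1N_1^\omega(u_1)+a_2N_2^\omega(u_2)\leq 1\right\},
\]
and since the objective increases with the $|u_s|$, the maximum is attained on the boundary. Writing $u_s=r_sv_s$ with $N_s(v_s)=1$ and $r_s=N_s(u_s)\geq 0$ (this decomposition being allowed by the PC-norm hypothesis on $N_s$, exactly as in the proof of Lemma \ref{th:estdeltaN}), the inner maximum over the $v_s$ separates and produces the factor $\delta_{N_1}(\alpha^1)\delta_{N_2}(\alpha^2)$, leaving
\[
\delta_N(\alpha)=\delta_{N_1}(\alpha^1)\delta_{N_2}(\alpha^2)\;\max\!\left\{r_1^{|\alpha^1|}r_2^{|\alpha^2|}\,:\, a_1r_1^\omega+a_2r_2^\omega=1,\ r_s\geq 0\right\}.
\]

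Next, I would linearise by setting $s_s=r_s^\omega$, so that $r_s^{|\alpha^s|}=s_s^{|\alpha^s|/\omega}$ and the constraint becomes the hyperplane $a_1s_1+a_2s_2=1$ in $(\RR_+)^2$. The problem reduces to maximising
\[
\varphi(s_1)=s_1^{|\alpha^1|/\omega}\left(\dfrac{1-a_1s_1}{a_2}\right)^{|\alpha^2|/\omega}
\]
on $[0,1/a_1]$. A direct differentiation (or the weighted AM--GM inequality applied to $a_1s_1$ and $a_2s_2$) shows that the maximum occurs at
\[
s_1=\dfrac{1}{a_1}\cdot\dfrac{|\alpha^1|}{|\alpha|},\qquad s_2=\dfrac{1}{a_2}\cdot\dfrac{|\alpha^2|}{|\alpha|}.
\]

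Plugging these values into $s_1^{|\alpha^1|/\omega}s_2^{|\alpha^2|/\omega}$ yields
\[
\left(\dfrac{|\alpha^1|}{a_1|\alpha|}\right)^{|\alpha^1|/\omega}\left(\dfrac{|\alpha^2|}{a_2|\alpha|}\right)^{|\alpha^2|/\omega}
=\left(\dfrac{|\alpha^1|^{|\alpha^1|}|\alpha^2|^{|\alpha^2|}}{|\alpha|^{|\alpha|}a_1^{|\alpha^1|}a_2^{|\alpha^2|}}\right)^{1/\omega},
\]
where I used $|\alpha^1|+|\alpha^2|=|\alpha|$ to combine the exponents of $|\alpha|$. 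Multiplying by the $\delta_{N_1}(\alpha^1)\delta_{N_2}(\alpha^2)$ factor extracted earlier gives the announced identity. There is no real obstacle here; the only thing to be careful about is the substitution $s_s=r_s^\omega$ (which is legitimate because $\omega\geq 1$ and $r_s\geq 0$, so the map $r_s\mapsto s_s$ is a bijection of $\RR_+$), and the fact that the expression $a_1N_1^\omega+a_2N_2^\omega$ under a $1/\omega$-th power does define a norm precisely under the hypothesis $\omega\geq 1$, as already observed by the authors after the statement of Theorem \ref{th:convergentire2}.
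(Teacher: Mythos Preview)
Your proof is correct and follows the same route as the paper: reduce to the scalar optimisation of Lemma \ref{th:estdeltaN} by the substitution $s_s=r_s^\omega$, then read off the maximum. The paper's version is simply more terse, observing that after this substitution the objective is the $\omega$-th root of the function already optimised in Lemma \ref{th:estdeltaN}, so the maximiser is unchanged and the maximal value is the $\omega$-th root of the earlier one. One small remark: the polar decomposition $u_s=r_sv_s$ with $N_s(v_s)=1$ does not actually rely on the PC-norm hypothesis (it holds for any norm), so that parenthetical comment is unnecessary, though harmless.
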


\begin{proof} Working as in the proof of Lemma \ref{th:estdeltaN}, we find
\[
\delta_N(\alpha^1,\alpha^2)=\delta_{N_1}(\alpha^1)\delta_{N_2}(\alpha^2) \max\left\{ 
r_1^{|\alpha^1|/\omega}r_2^{|\alpha^2|\omega}\; :\; a_1r_1+a_2r_2=1
\right\},
\]
but this is only the $\omega$-th root of the function in the proof of 
Lemma \ref{th:estdeltaN2} so that the result immediately follows. 
\end{proof}
\begin{proof}[Proof of Theorem \ref{th:convergentire2}] The proof is identical to that of Theorem \ref{th:convergentire} up to the conclusion of Step 4 in which we use the estimate of Lemma \ref{th:estdeltaN2} instead of that of Lemma \ref{th:estdeltaN}. The corresponding estimate in the conclusion even simplifies to become 
\[
\left|c_{\alpha^1\alpha^2} \right|  \|R_{\alpha^1,\alpha^2}\|_K  \leq M (|\alpha^1|+1)
(|\alpha^2|+1) \cdot \left(\dfrac{a_1 A}{A_1}\right)^{|\alpha^1|/\omega}
\left(\dfrac{a_2 A}{A_2}\right)^{|\alpha^2|/\omega},
\]
from which the conclusion is immediate. 
\end{proof}

\begin{remark}[About the Assumption on the norm $N$.] \hfill\par
\begin{asparaenum}[(A)]
\item Any norm $N_s$ is dominated by the PC-norm $\mathbf{N}_s$ defined by \[\mathbf{N_s}(z)=\max\{N(\lambda_1z_1,\dots, \lambda_nz_n),\; |\lambda_i|=1, \; i=1,\dots, n\},\]
and $\mathbf{N}_s$ is the smallest norm satisfying the property. Since 
\[E_\omega^{n_s}(A_s,\mathbf{N}_s)\subset E_\omega^{n_s}(A,N_s),\] we may apply Theorem \ref{th:convergentire} to get a space depending on $\mathbf{N}$ (defined from the norms $\mathbf{N}_s$ as in the above theorems) on which the Newton product converges. This space however is unlikely to be optimal, as follows from the next remark. 
\item In the case of entire functions of exponential type, another approach is available that enable to eliminate the assumption on the norm $N$ to be $PC$. By representing such a function $f$ (of exponential type $\tau<1$ with respect to $N$) by a Laplace transform of an analytic functional,
see \cite[Section 4.5]{hormander}, we can write $f$ as
 \[f(w)=\int_{\{N^\star(z)\leq \tau\}} \exp \langle z, w \rangle d\mu (z),\]
where $\mu$ is a complex measure supported on the ball $\{N^\star(z)\leq \tau\}$ and $N^\star$ is the \textbf{dual norm} of $N$ (without assuming that $N$ is PC). Recall that
\[N^\star(z)= \max\big\{|\langle z,\xi\rangle|, \; N(\xi)=1\big\}.\]
 This yields a new way of computing $\Pi(f)$, hence $f-\Pi(f)$, namely 
\[\Pi(f)(w)=\int_{\{N^\star(z)\leq \tau\}} \Pi\big(\exp \langle z, \cdot \rangle\big)(w) d\mu (z),\]
which is interesting since $w\to\exp \langle z, w \rangle$ is the product $\exp \langle z^1,
 w^1 \rangle\times \exp \langle z^2, w^2 \rangle$ so that we may use the available formula for the Newton projector of a product function (Theorem \ref{th:fbsnppppart2}). Let us just point out how the formula for $N$ naturally comes from the norms $N_s$ through duality. In fact, if $N=\tau_1N_1+ \tau_2N_2$ as in Corollary \ref{th:cor1conentire} then one readily checks that 
\[N^\star(z^1,z^2)= \max\left\{(\tau_1N_1)^\star(z^1), (\tau_2N_2)^\star(z^2) \right\} = \max\left\{\frac{N_1^\star(z^1)}{\tau_1}, \frac{N_2^\star(z^2)}{\tau_2}\right\}.\]
Hence if $F$ is an entire function of exponential type $< 1$ with respect to $N$, for some measure $\mu$ we have 
\begin{align*}f(w)&= \int_{\{N^\star(z)\leq 1\}} \exp \langle z, w \rangle d\mu(z^1,z^2)\\
&= \iint_{\{N_1^\star(z_1)\leq \tau_1\}\times\{N_2^\star(z_2)\leq \tau_2\}} \exp\langle z^1, w^1 \rangle\exp\langle z^2, w^2 \rangle d\mu(z^1,z^2).
\end{align*}
And the convergence of $\Pi_d(f)$ to $f$ can be obtained as from the convergence of 
$\Pi_d$ for the product function $w\to \exp\langle z^1, w^1 \rangle \times \exp\langle z^2, w^2 \rangle$ (the convergence being uniform in $w^s$). We omit the details.  
\end{asparaenum}
\end{remark}

\subsection{Example}
We illustrate the above convergence theorems in the case 
of the product of two Newton sequences of Kergin interpolation projectors as in Subsection
\ref{sec:exampleHK} for which deep approximation results are available for entire functions. Let us first recall such a result. We use the notation introduced above. Give $\omega>0$, we set 
\begin{equation}\label{eq:defcconstant} c=c(\omega)= \int_{0}^{1/2} \dfrac{t^{\omega-1}}{1-t}dt.\end{equation}
In particular, \[c(1)=\ln 2.\]
\par Given a sequence of points $a^s_d$ in $\CC^{n_s}$, $d\in \NN$, such that the sequence of their norms $N_s(a^s_d)$ is non decreasing, we define the counting function $\mathcal{N}_s(r)$ as the number of interpolation points whose $N^s$-norm is not bigger than $r$, that is 
\[\mathcal{N}_s(r)= \text{card}\{i\in \NN\;:\; N^s(a^s_i)\leq r\}.\]
The $\omega$-\textbf{density} $\Delta_s$, with respect to $N_s$, of the sequence $a^s_d$ is then defined as 
\[\Delta_s= \liminf_{r\to\infty} \dfrac{\mathcal{N}_s(r)}{r^\omega}.\]
It is known that, for all entire function of $\omega$-type $\tau^s$ such that 
$\tau_s<c \Delta_s$, the sequence of Kergin interpolation polynomials $K[a_0^s,\dots,a_d^s;f]$ converges uniformly to $f$ on every compact subset of $\CC^{n_s}$. This is a multivariate generalization of a Theorem of Gelfond \cite{guelfond}. It was first proved in the case of the standard euclidean norm by Bloom \cite{bloduke} and then extended to an arbitrary norm by Andersson and Passare \cite{pasjat}. 
\par
Here, a direct application of Corollary \ref{th:cor1conentire2} yields the following. 
\begin{theorem}\label{th:appltiker} Let $(a^s_d)$, $s=1,2$ be two sequences of points as above.  If their density are equal, i.e.  $\Delta_1=\Delta_2 (=\Delta)$, then, for all entire functions on $\CC^n=\CC^{n_1}\times\CC^{n_2}$ of $\omega$-type $\tau$ with respect to the norm $N$, see below, satisfying 
\[\tau < \Delta c,\]
where $c$ is defined in \eqref{eq:defcconstant}, then the sequence of polynomials \[(\kg[a_0^1,\dots,a_d^1]) \ntimes (\kg[a_0^2,\dots,a_d^2])(f)\] converges uniformly to $f$ on every compact subset of $\CC^n$.  
Here $N=N_1+N_2$ if $\omega \leq 1$ and $N=(N_1^\omega+N_2^\omega)^{1/\omega}$ if $\omega\geq 1$. 
\end{theorem}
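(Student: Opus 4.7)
The plan is to deduce Theorem \ref{th:appltiker} as a direct application of Corollary \ref{th:cor1conentire2} (in the case $\omega\leq 1$) and its analogue for $\omega\geq 1$ derived from Theorem \ref{th:convergentire2}, with the convergence hypothesis for each factor supplied by the classical Gelfond--Bloom--Andersson--Passare theorem recalled immediately before the statement.

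First, I would verify the hypotheses of the corollaries. For each $s=1,2$, the Andersson--Passare theorem tells that, since the $\omega$-density of $(a^s_d)$ with respect to $N_s$ equals $\Delta_s = \Delta$, the Newton sequence $\mathcal{N}^s=(\kg[a_0^s,\dots,a_d^s])$ is $\hh(\CC^{n_s})$-converging on the space of entire functions on $\CC^{n_s}$ of $\omega$-type strictly less than $c\Delta$ with respect to $N_s$, where $c=c(\omega)$ is the constant defined in \eqref{eq:defcconstant}. The crucial role of the assumption $\Delta_1=\Delta_2=\Delta$ is precisely that it supplies a common threshold $\tau_1=\tau_2=c\Delta$, matching the hypothesis of the ``equal types'' corollary.

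Next I would apply the corresponding corollary with $\tau=c\Delta$. When $\omega\leq 1$, Corollary \ref{th:cor1conentire2} yields directly that $\mathcal{N}^1\ntimes\mathcal{N}^2$ is converging on the space of entire functions on $\CC^n$ of $\omega$-type $<c\Delta$ with respect to $N_1+N_2$. When $\omega\geq 1$, the paper indicates in the discussion following Theorem \ref{th:convergentire2} that Corollary \ref{th:cor1conentire2} ``remains true with obvious changes'', the only change being that the combined norm is now $(N_1^\omega+N_2^\omega)^{1/\omega}$ in accordance with \eqref{eq:defNormN2}; this matches exactly the definition of $N$ in the theorem. In either case, convergence in $\hh(\CC^n)$ is, by definition, uniform convergence on every compact subset, which is the conclusion asserted.

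No step of this argument presents a genuine obstacle: the whole content of the theorem is algebraic bookkeeping, matching the Andersson--Passare univariate input to the hypotheses of the general convergence corollaries and reading off the resulting combined norm. The only mild care required is to treat the cases $\omega\leq 1$ and $\omega\geq 1$ in parallel via the two companion theorems, and to observe, as in Corollary \ref{th:cor1conentire}, that the strict inequality $\tau<c\Delta$ allows one to choose auxiliary constants $A_s<c\Delta$ so that every $f$ of $\omega$-type $<c\Delta$ with respect to $N$ lies in some $E_\omega^n(A,N)$ with $A$ satisfying condition \eqref{eq:condonA} (resp. \eqref{eq:condonA2}).
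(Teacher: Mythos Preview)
Your proposal is correct and follows exactly the paper's approach: the paper itself presents Theorem \ref{th:appltiker} as ``a direct application of Corollary \ref{th:cor1conentire2}'', with the convergence hypothesis on each factor supplied by the Gelfond--Bloom--Andersson--Passare result recalled just before, and the $\omega\geq 1$ case handled by the analogue of that corollary derived from Theorem \ref{th:convergentire2}.
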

Of course, the case $\Delta_1\neq\Delta_2$ is handled by Corollary
\ref{th:cor1conentire}.
%
%
\section{Spaces of smooth functions}\label{sec:smooth}
\subsection{Adapting the tools} Roughly the same technique as in Section \ref{sec:holomorphic} can be used to derive results on spaces of differentiable functions. We will omit some details of the proofs where they are similar to those previously given. The results obtained are not optimal, this will be explained below. Although we will present the results in a more general setting, the reader may assume that, in what follows, all the compact sets considered are convex bodies (compact convex sets of non empty interior). Given a fat compact set $\kappa$ ($\kappa$ is the closure of its interior), we denote by $\cc^m(\kappa)$ the space of all functions which are $m$-times continuously differentiable on the interior of $\kappa$ and whose all derivatives of order $\leq m$ extend continuously to $\kappa$. It is a Banach space when endowed with the norm
\begin{equation}
\|f\|_{m,\kappa}=\max_{|\alpha|\leq m} \|D^\alpha f\|_\kappa.
\end{equation} 
First, we need a stronger notion of Bernstein-Markov measure, see Subsection \ref{sec:BMmeasures}. Let $\mu$ be a probability measure on $\kappa$. If there exists $\theta >0$ such that, for some constant $C_\mu$, we have
\begin{equation}\label{eq:strongBM}
\|p\|_\kappa \leq C_\mu (\deg p)^{\theta} \| p\|_2, \quad p\in \pol(\RR^n),
\end{equation}
where as usual $\| p\|^2_2=\int_\kappa p^2(x) d\mu(x)$,
we say that $\mu$ is a $\theta$-\textbf{strong Bernstein-Markov measure} (\textbf{SBM}) on $\kappa$. When there exists such a measure on $\kappa$, we say that $\kappa$ is a $\theta$-SBM compact.
Observe that the sub-exponential term $(1+\varepsilon)^{\deg (p)}$ in \eqref{eq:inBM} is replaced in \eqref{eq:strongBM} by a polynomial term in the degree. 
The classical Nikolskii inequality states that $dx$ (Lebesgue measure) is $1$-SBM for $[-1,1]$, see \cite[Theorem 3.1.4]{rassias}. Zeriahi showed in \cite{zer2} that the Lebesgue measure is SBM for a large class of compact sets. From the bounds in \cite[p. 686]{zer2}, we deduce that, when $K$ is a convex body in $\RR^n$, then $\theta$ can be taken as $3n/2$. Such a $\theta$ is probably not optimal but we are not aware of more precise bounds. We omit the proof of the following lemma which is similar to that of [8, Lemma 2, p. 290], see also Subsection \ref{sec:BMmeasures}. 

\begin{lemma} If $\mu_s$ is a $\theta_s$-SBM measure on $K_s\in \RR^{n_s}$ for $s=1,2$ then $\mu_1\times \mu_2$ is a $(\theta_1+\theta_2)$-SBM measure on $K_1\times K_2$. 
\end{lemma}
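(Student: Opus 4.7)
The plan is to verify the defining inequality \eqref{eq:strongBM} directly on $K=K_1\times K_2$. Let $p$ be a polynomial of total degree $d$ on $\RR^{n_1+n_2}$ and write the variable as $(x,y)$ with $x\in K_1$, $y\in K_2$. For each fixed $y\in K_2$, the partial function $x\mapsto p(x,y)$ is a polynomial of degree at most $d$, so the $\theta_1$-SBM hypothesis on $\mu_1$ yields
\[
\|p(\cdot,y)\|_{K_1}^2 \leq C_{\mu_1}^{\,2}\, d^{\,2\theta_1} \int_{K_1} p(x,y)^2 \, d\mu_1(x).
\]
Introducing $R(y):=\int_{K_1} p(x,y)^2\, d\mu_1(x)$, which is a \emph{non-negative} polynomial in $y$ of total degree at most $2d$, and taking the supremum over $y\in K_2$ on the left, one obtains $\|p\|_{K}^{\,2} \leq C_{\mu_1}^{\,2}\,d^{\,2\theta_1}\|R\|_{K_2}$.

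The second (and essentially the only delicate) step is to estimate $\|R\|_{K_2}$ in terms of $\|R\|_{L^1(\mu_2)}$, since Fubini identifies the latter with $\|p\|_{L^2(\mu_1\times \mu_2)}^{\,2}$. I would apply the $\theta_2$-SBM inequality for $\mu_2$ to $R$ at degree $2d$, getting $\|R\|_{K_2}\leq C_{\mu_2}(2d)^{\theta_2}\|R\|_{L^2(\mu_2)}$. To go from the $L^2$ to the $L^1$ norm without losing an extra polynomial factor in $d$, I exploit $R\geq 0$ through the pointwise bound $R(y)^2 \leq \|R\|_{K_2}\cdot R(y)$ on $K_2$, which after integration against $\mu_2$ gives
\[
\|R\|_{L^2(\mu_2)}^{\,2} \leq \|R\|_{K_2}\cdot \|R\|_{L^1(\mu_2)}.
\]
Combining the last two displays yields $\|R\|_{K_2} \leq C_{\mu_2}^{\,2}(2d)^{2\theta_2}\|R\|_{L^1(\mu_2)} = C_{\mu_2}^{\,2}(2d)^{2\theta_2}\|p\|_{L^2(\mu_1\times\mu_2)}^{\,2}$.

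Plugging this into the first bound, the final estimate reads
\[
\|p\|_{K_1\times K_2} \leq C_{\mu_1}C_{\mu_2}\,2^{\theta_2}\, d^{\,\theta_1+\theta_2}\, \|p\|_{L^2(\mu_1\times\mu_2)},
\]
which is exactly inequality \eqref{eq:strongBM} on $K_1\times K_2$ with exponent $\theta=\theta_1+\theta_2$ and constant $C_{\mu_1\times \mu_2}=C_{\mu_1}C_{\mu_2}2^{\theta_2}$. The whole argument rests on Fubini, two applications of the one-variable SBM hypothesis, and the $L^2\to L^1$ conversion coming from $R\geq 0$; I expect this last point to be the only genuinely non-cosmetic ingredient, since the naive route (directly applying the one-variable SBM to $p(\cdot,y)^2$ and to $p(x,\cdot)^2$ without the non-negativity trick) produces an $L^2$-of-$L^\infty$ norm on the right-hand side that cannot be identified with $\|p\|_{L^2(\mu_1\times \mu_2)}$ and would cost an extra polynomial factor, spoiling the announced exponent $\theta_1+\theta_2$.
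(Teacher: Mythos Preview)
Your argument is correct. The paper actually omits the proof of this lemma, merely pointing to \cite[Lemma 2, p.~290]{blocalTD} (the analogous statement for ordinary Bernstein--Markov measures) and to Subsection~\ref{sec:BMmeasures}; your freeze-one-variable argument together with the non-negativity trick $R^2\le \|R\|_{K_2}R$ to pass from $L^2$ to $L^1$ is precisely the standard way to carry this out and gives the exact exponent $\theta_1+\theta_2$.
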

Recall now that a compact $\kappa$ satisfies a \textbf{Markov inequality} of \textbf{exponent} $r$ if, for some positive constant $M_\kappa$, we have
\begin{equation}
\label{eq:markovineq}  
\|D^\alpha p\|_\kappa \leq M_\kappa (\deg p)^{r|\alpha|} \|P\|_\kappa, \quad\alpha\in\NN^n, \quad p\in \pol(\RR^n).
\end{equation}
We refer to \cite{plesniak4} for a large class of compact sets satisfying a Markov inequality, see also the surveys \cite{plesniak2,plesniak3}. In the case of a convex body, the exponent $r$ can be taken as $2$ (in any dimension), see \cite{wilhelmsen}.
\par It is readily seen that if $\kappa_s$ satisfies a Markov inequality of exponent $r_s$, $s=1,2$, then $\kappa_1\times \kappa_2$ satisfies a Markov inequality of exponent $r=\max(r_1,r_2)$. 
\par
If $\kappa$ is a $\theta$-SBM compact satisfying a Markov inequality of exponent $r$, then there exists a constant $C_{\mu, \kappa}=C(\mu)M(\kappa)$, such that for all derivatives $D^\alpha$ we have
\begin{equation}
\label{eq:difstrongBM}
\|D^\alpha p\|_\kappa\leq C_{\mu, \kappa} (\deg p)^{\theta + r|\alpha|} \| p\|_2, \quad p\in \pol(\RR^n).
\end{equation}
Finally, we say that $\kappa$ is a \textbf{Jackson compact set} if for all function $f$ in $\cc^{m}(\kappa)$, we have 
\begin{equation}\text{dist}(f, \pol_d(\RR^n)) \leq M_J d^{-m}\end{equation} where the distance is with respect to the uniform norm on $\kappa$ and $M_J$ depends only on $f$, $m$ and $\kappa$. For this notion of Jackson sets we refer to \cite{plesniak} and the references therein.  In view of \cite[Theorem 2]{bagby} if $\kappa$ is quasi-convex (i.e. satisfies the Whitney property P), so that, by the Whitney extension Theorem \cite{whitney, bierstone}, all $f$ in $\cc^m(\kappa)$ extends to a function $\cc^m$ on a neighborhood of $\kappa$ then for all degree $d$,  there exists a polynomial $t_d$ (of near-best simultaneous approximation of $f$) such that 
\begin{equation}\label{eq:simappr}
\|D^\alpha (f-t_d)\|_\kappa \leq \mathbf{M}_J /d^{m-|\alpha|}, \quad d\in \NN, 
\end{equation}
where $\mathbf{M}_J=M_J(m,f)\geq M_J$ depends only on $f$ and $m$ and $\kappa$. Such sets, in particular, convex bodies, are therefore Jackson sets. 
\par
Given a $\theta$-SBM measure $\mu$ on $\kappa$, we may construct the sequence of orthonormal polynomials $b_\alpha$ and the corresponding orthogonal projection 
\[\pro_{d,\mu}(f)= \sum_{|\alpha|\leq d} c_\alpha(f) b_\alpha,\quad c_\alpha(f) = \langle f, b_\alpha\rangle =\int_K f(x){b}_\alpha(x)d\mu(x),\]
as in Subsection \ref{sec:BMmeasures}.
\begin{lemma}\label{th:prodsbm} Let $K\subset \RR^n$ be a $\theta$-SBM compact set with measure $\mu$ as well as a Jackson and Markov compact set with exponent $r$. For all $f\in \cc^{\mathbf{m}}(K)$, we have
\begin{equation}\label{eq:conforcp} f = \sum_{d=0}^\infty \sum_{|\alpha|=d}  c_\alpha(f) b_\alpha\quad\text{in \; ${\cc}^{\overline{m}}(K)$,}\end{equation}
whenever $\mathbf{m}> \theta + r\overline{m}+1$. 
\end{lemma}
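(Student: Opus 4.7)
The plan is to mimic the strategy of Section \ref{sec:holomorphic} but to work block by block rather than term by term. Set $\pi_d = \pro_{d,\mu}(f) - \pro_{d-1,\mu}(f) = \sum_{|\alpha|=d} c_\alpha(f)\, b_\alpha$ (with the convention $\pi_0 = c_0(f) b_0$); the goal is to prove that $\sum_{d\geq 0} \|\pi_d\|_{\overline{m},K} < \infty$, which makes the series in \eqref{eq:conforcp} normally convergent in the Banach space $\cc^{\overline{m}}(K)$.

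First I would control the $L^2$-norm of each block. Writing $f - \pro_{d-1,\mu}(f) = \pi_d + (f - \pro_{d,\mu}(f))$ and noting that the two summands on the right are orthogonal in $L^2(\mu)$, Pythagoras yields $\|\pi_d\|_2 \leq \|f - \pro_{d-1,\mu}(f)\|_2$. Since $\pro_{d-1,\mu}(f)$ is the $L^2(\mu)$-best approximant of $f$ in $\pol_{d-1}(\RR^n)$ and $\mu$ is a probability measure, this is in turn bounded by $\|f - t_{d-1}\|_K$ where $t_{d-1}$ is a Jackson best uniform approximant of degree $d-1$. The Jackson hypothesis applied to $f \in \cc^{\mathbf{m}}(K)$ then gives $\|\pi_d\|_2 = O(d^{-\mathbf{m}})$. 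Since $\pi_d$ is a polynomial of degree at most $d$, the combined strong Bernstein--Markov and Markov estimate \eqref{eq:difstrongBM} yields, for every multi-index $|\beta|\leq \overline{m}$,
\[
\|D^\beta \pi_d\|_K \leq C_{\mu,K}\, d^{\theta + r|\beta|}\, \|\pi_d\|_2 = O\bigl(d^{\theta + r\overline{m} - \mathbf{m}}\bigr),
\]
whence $\|\pi_d\|_{\overline{m},K} = O(d^{\theta + r\overline{m} - \mathbf{m}})$. The series $\sum_d d^{\theta + r\overline{m} - \mathbf{m}}$ converges precisely when $\mathbf{m} > \theta + r\overline{m} + 1$, which is exactly the hypothesis of the lemma.

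It remains to identify the sum $F \in \cc^{\overline{m}}(K)$ of the normally convergent series with $f$. Uniform convergence of the partial sums $\pro_{D,\mu}(f)$ to $F$ on $K$ implies convergence in $L^2(\mu)$; on the other hand the estimate $\|f - \pro_{D,\mu}(f)\|_2 \leq \|f - t_D\|_K = O(D^{-\mathbf{m}}) \to 0$ shows $\pro_{D,\mu}(f) \to f$ in $L^2(\mu)$, so $F = f$ as continuous functions on $K$, hence in $\cc^{\overline{m}}(K)$. The main subtle point in the whole argument is the sharp balance of the exponents $\theta$, $r\overline{m}$ and $\mathbf{m}$ in the block estimate: the ``$+1$'' in the hypothesis is precisely the summability margin and is unavoidable here, since, contrary to the holomorphic setting, no geometric decay $\eta^d$ with $\eta<1$ is at our disposal.
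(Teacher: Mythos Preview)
Your proof is correct and follows essentially the same route as the paper: block polynomials $H_d=\sum_{|\alpha|=d}c_\alpha(f)b_\alpha$, the $L^2$ bound $\|H_d\|_2=O(d^{-\mathbf m})$ via best approximation and Jackson, the passage to $\|D^\beta H_d\|_K$ via \eqref{eq:difstrongBM}, and identification of the limit through $L^2$. The only cosmetic difference is that you invoke Pythagoras to obtain $\|\pi_d\|_2\le\|f-\pro_{d-1,\mu}(f)\|_2$ directly, whereas the paper uses the triangle inequality and picks up a harmless factor~$2$.
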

\begin{proof} We denote by $\mu$ the $\theta$-SBM measure on $K$. The reasoning is similar to that given in Subsection \ref{sec:BMmeasures}, see also \cite{zer2}. Let us write $H_d = \sum_{|\alpha|=d}  c_\alpha(f) b_\alpha$. We prove that $\sum_{d=0}^\infty D^\beta H_d$ uniformly converges on $K$ for $\|\alpha\leq \overline{m}$.  To explain the condition on $\mathbf{m}$, let us just observe that, the assumptions on $K$ enable us to use \eqref{eq:difstrongBM} so that
\begin{equation}\label{eq:estHd}
\|D^\beta H_d\|_K \leq C_{\mu,K} d^{\theta+r\overline{m}}\|H_d\|_2,
\end{equation}
Now, let us just observe that here, calling $t_{d-1}$ a best uniform approximation polynomial of $f$ in $\pol_{d-1}(\RR^n)$, see \eqref{eq:simappr},
we have
\begin{align}
\|H_d\|_2 &=\|\pro_{d,\mu}(f)-\pro_{d-1,\mu}(f)\|_2 & 
\\
& \leq \|f-\pro_{d,\mu}(f)\|_2+ \|f-\pro_{d-1,\mu}(f)\|_2 & 
 \\ 
& \leq 2 \|f-\pro_{d-1,\mu}(f)\|_2 & \text{($\pro_{d,\mu}(f)$ is best in $\pol_d$)}
\\ 
&\leq 2 \|f-t_{d-1}\|_2 & \text{($\pro_{d-1,\mu}(f)$ is best in $\pol_{d-1}$)}\\ 
\label{eq:estcalphabetasmooth}
&  \leq 2\|f-t_{d-1}\|_K \leq \frac{2M_J}{(d-1)^{\mathbf{m}}} \leq \frac{M'}{d^{\mathbf{m}}}& \text{($K$ is Jackson and $f\in {\cc}^{\mathbf{m}}(K)$)}.
\end{align}
This bound together with inequality \eqref{eq:estHd} now yields
\[
\|D^\beta H_d\|_K \leq C_{\mu,K} M' d^{\theta+r\overline{m}-\mathbf{m}}, 
\]
so that the series $\sum_{d=0}^\infty D^\beta H_d$ converges normally on $K$ as soon as 
$\theta+r\overline{m}-\mathbf{m}<-1$. 
This shows that, under the assumption on $\mathbf{m}$, the right hand side of \eqref{eq:conforcp} converges in $\cc^{\overline{m}}(K)$ to a certain function $g$. Since 
the convergence of the series to $f$ plainly holds in $L^2$, $g$ coincides almost everywhere with $f$ on $K$, hence everywhere by continuity. 
\end{proof}

\subsection{The convergence theorem}
It is important to note the three levels of differentiability $\mathbf{m}$, $\overline{m}$ and $m$ that occur in the following statement. 
\begin{theorem}\label{th:convdiffonc} For $s=1,2$, we let $K_s$ denote a compact set in  $\RR^{n_s}$ such that
\begin{enumerate}
\item $K_s$ is a $\theta_s$-SBM compact,
\item $K_s$ satisfies a Markov inequality of exponent $r_s$. 
\end{enumerate}
We assume further that $K_1\times K_2$ is a Jackson compact set in $\RR^n$, $n=n_1+n_2$ and $\kappa_s$ is a compact subset of $K_s$. We set $K=K_1\times K_2$ and $\kappa=\kappa_1\times \kappa_2$. 
\par 
Let $\mathcal{N}^s=(\Pi_0^s, \Pi_1^s, \dots)$ denote a Newton sequence on $\cc^{\overline{m}}(K_s)$,  $s=1,2$.
\par
If $\mathcal{N}^s$ is $\cc(\kappa_s)$-converging on $\cc^m(K_s)$ for $s=1,2$, $m\geq \overline{m}$, then $\mathcal{N}^1\ntimes \mathcal{N}^2=(\Pi_0, \Pi_1, \dots)$ is $\cc(\kappa)$-converging on $\cc^\mathbf{m}(K)$ provided that
\begin{equation}\label{eq:condonp}
\mathbf{m}> 3/2+(\theta_1+\theta_2)+m(r_1+r_2).\end{equation} 
\end{theorem}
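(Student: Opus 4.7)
The plan is to follow the general three-step strategy of Subsection~\ref{sec:strategy}, using as the product expansion \eqref{eq:strat1} the orthonormal expansion associated with a strong Bernstein--Markov measure on the product compact $K=K_1\times K_2$. Concretely, I would pick a $\theta_s$-SBM measure $\mu_s$ on $K_s$; by the product lemma stated earlier in Section~\ref{sec:smooth}, $\mu=\mu_1\times \mu_2$ is a $(\theta_1+\theta_2)$-SBM measure on $K$, and the corresponding orthonormal polynomials factor as $b_{\alpha,\beta}(x^1,x^2)=b_\alpha(x^1)\,b_\beta(x^2)$ exactly as in \eqref{eq:prodorthopol}. Apply Lemma~\ref{th:prodsbm} on $K$ (with regularity index $\theta_1+\theta_2$ and Markov exponent $r=\max(r_1,r_2)$) to turn the smoothness hypothesis $f\in\cc^{\mathbf{m}}(K)$ into a $\cc^{\overline{m}}(K)$-convergent expansion $f=\sum_{j\ge 0}\sum_{|\alpha|+|\beta|=j} c_{\alpha\beta}(f)\,b_{\alpha,\beta}$, valid as soon as $\mathbf{m}$ is large enough relative to $\overline{m}$.

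Once the expansion is in hand, Step 2 of the strategy plus Lemma~\ref{th:lemmabd} yields the working identity
\[
f-\Pi_d(f)=\sum_{j\ge d+1}\sum_{|\alpha|+|\beta|=j} c_{\alpha\beta}(f)\sum_{(i_1,i_2)\in B(d,\alpha,\beta)} \pi^1_{i_1}(b_\alpha)\,\pi^2_{i_2}(b_\beta),
\]
to be estimated in $\cc(\kappa)$-norm. The Newton summand factors are controlled by Banach--Steinhaus, reasoning exactly as in Corollary~\ref{eq:corunfiboundedness} but on the Banach space $\cc^m(K_s)$: the hypothesis that $\mathcal{N}^s$ is $\cc(\kappa_s)$-converging on $\cc^m(K_s)$ forces pointwise boundedness and hence a uniform bound $\|\pi^s_d(g)\|_{\kappa_s}\leq C_s \|g\|_{m,K_s}$ for all $g\in \cc^m(K_s)$ and $d\in\NN$.

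Specialising to $g=b_\alpha$, I would combine the Markov inequality \eqref{eq:markovineq} of exponent $r_s$ with the SBM bound $\|b_\alpha\|_{K_s}\leq C|\alpha|^{\theta_s}$ (obtained from \eqref{eq:strongBM} applied with $\|b_\alpha\|_2=1$) to get $\|b_\alpha\|_{m,K_s}\leq C|\alpha|^{\theta_s+r_s m}$, and therefore
\[
\|\pi^1_{i_1}(b_\alpha)\,\pi^2_{i_2}(b_\beta)\|_\kappa \leq C\,|\alpha|^{\theta_1+r_1 m}\,|\beta|^{\theta_2+r_2 m}.
\]
Using $\#B(d,\alpha,\beta)=O(j^2)$ from \eqref{eq:escardBd} together with the Bessel-type bound $\sum_{|\alpha|+|\beta|=j}|c_{\alpha\beta}(f)|^2\leq \|f-t_{j-1}\|_K^2 \leq C\,j^{-2\mathbf{m}}$, which comes from Parseval on a single slice and the Jackson property of $K$ applied to $f\in\cc^{\mathbf{m}}(K)$, a Cauchy--Schwarz estimate on each slice bounds $\|f-\Pi_d(f)\|_\kappa$ by the tail of a series whose general term behaves polynomially in $j$ with exponent involving $-\mathbf{m}+(\theta_1+\theta_2)+m(r_1+r_2)$ plus contributions from the slice cardinalities.

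The main obstacle is the exponent bookkeeping that leads to the precise form of \eqref{eq:condonp}. Two points deserve particular care: (a) applying Cauchy--Schwarz on the slice $|\alpha|+|\beta|=j$ rather than estimating each $|c_{\alpha\beta}(f)|$ individually, so that Bessel's inequality absorbs the count of multi-indices in the slice and produces the comparatively mild $3/2$ in the final bound; and (b) choosing the auxiliary regularity $\overline{m}$ just large enough for Lemma~\ref{th:prodsbm} to apply (so that $\Pi_d$ may be exchanged with the expansion by continuity on $\cc^{\overline{m}}(K)$), yet small enough that the condition \eqref{eq:condonp} on $\mathbf{m}$ is the binding one. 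With the right arrangement of these balances, the tail series converges under \eqref{eq:condonp} and $\Pi_d(f)$ converges uniformly to $f$ on $\kappa$, as required.
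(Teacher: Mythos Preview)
Your scaffolding is exactly the paper's: product SBM measure $\mu=\mu_1\times\mu_2$, expansion via Lemma~\ref{th:prodsbm}, Banach--Steinhaus on $\cc^m(K_s)$ to get $\|\pi^s_d(g)\|_{\kappa_s}\le C_s\|g\|_{m,K_s}$, and the identity from Lemma~\ref{th:lemmabd}. The gap is in the estimation step, and it is precisely the point the paper flags when it says that proceeding ``as in the proof of Theorem~\ref{th:converghK}'' here ``would lead to a weaker estimate'' and that ``a somewhat more tricky argument'' is required. If you bound each product $\pi^1_{i_1}(b_\alpha)\,\pi^2_{i_2}(b_\beta)$ separately by $Cj^{(\theta_1+\theta_2)+m(r_1+r_2)}$, multiply by $\#B(d,\alpha,\beta)=O(j^2)$, and then apply Cauchy--Schwarz over the slice $|\alpha|+|\beta|=j$ (cardinality $O(j^{n-1})$) together with the Bessel/Jackson bound $\sum_{|\alpha|+|\beta|=j}c_{\alpha\beta}(f)^2=O(j^{-2\mathbf m})$, the $j$-th term is $O\big(j^{\,2+(n-1)/2+(\theta_1+\theta_2)+m(r_1+r_2)-\mathbf m}\big)$. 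Summability then forces $\mathbf m>3+(n-1)/2+(\theta_1+\theta_2)+m(r_1+r_2)$, strictly worse (and dimension-dependent) than \eqref{eq:condonp}. Your hope in point~(a) that Cauchy--Schwarz on the slice ``absorbs the count of multi-indices'' and yields the $3/2$ is not borne out: the slice cardinality contributes $j^{(n-1)/2}$, and the factor $j^2$ from $\#B$ is still there.

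The paper's refinement is to avoid the termwise splitting altogether. Writing the $j$-th block as $H_j$ and permuting the sums, one has $H_j=\sum_{i_2=0}^{j}\pi^2_{i_2}(h_{i_2})$ where, for fixed $z^1$, $h_{i_2}(\cdot)=\sum_{i_1}\sum_{(\alpha,\beta)}c_{\alpha\beta}(f)\,\pi^1_{i_1}(b_\alpha)(z^1)\,b_\beta(\cdot)$ is a polynomial of degree $\le j$ in the second group of variables. One bounds $\|\pi^2_{i_2}(h_{i_2})\|_{\kappa_2}\le C\,j^{\theta_2+mr_2}\|h_{i_2}\|_2$ via Banach--Steinhaus and \eqref{eq:difstrongBM} applied to $h_{i_2}$ as a whole; orthonormality of the $b_\beta$ turns $\|h_{i_2}\|_2$ into an $\ell^2$-sum of coefficients, into which the analogous bound $\|\pi^1_{i_1}(b_\alpha)\|_{\kappa_1}\le Cj^{\theta_1+mr_1}$ is inserted. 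The outer sum over $i_2$ is then handled by the concavity inequality $\sum_{i_2=0}^{j}\sqrt{\square_{i_2}}\le\sqrt{j+1}\,\big(\sum_{i_2}\square_{i_2}\big)^{1/2}$, and the total $\ell^2$-mass reduces to $\|\pro_{j,\mu}(f)-\pro_{j-1,\mu}(f)\|_2=O(j^{-\mathbf m})$ by the Jackson property. This produces a single factor $j^{1/2}$ in place of your $j^{2}\cdot j^{(n-1)/2}$, and that is where the $3/2$ in \eqref{eq:condonp} comes from. Without this reorganization your argument proves only a weaker statement, not the theorem as written.
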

In the above statement, it is implicit that when $\Pi^s_d$, $s=1,2$, is a continuous projector on 
$\cc^{\overline{m}}(K_s)$ then $\Pi^1_d\ntimes\Pi^2_d$ is a well defined continuous projector on $\cc^{\overline{m}}(K)$. Such a result is not established in \cite{bercal1} where the space considered are the usual Fréchet space $\cc^m(\Omega_s)$ where $\Omega_s$ is open in $\RR^{n_s}$. The proof in the present case is similar, we omit it. The reader may freely modify the hypothesis in the statement above assuming that all the projectors are defined on the space of functions differentiable on the whole space (with the same level of differentiability) and extend continuously to the spaces indicated, see the statement of Corollary \ref{th:convdiffoncconvex2} below. This is plainly the case in the examples presented in Subsection \ref{secexadiff}. 
\begin{corollary}
\label{th:convdiffoncconvex}
For $s=1,2$, we let $K_s$ denote a convex body in  $\RR^{n_s}$. We set $K=K_1\times K_2$ and $\kappa=\kappa_1\times \kappa_2$. 
\par 
Let $\mathcal{N}^s=(\Pi_0^s, \Pi_1^s, \dots)$ denote a Newton sequence on $\cc^{\overline{m}}(K_s)$,  $s=1,2$.
\par
If $\mathcal{N}^s$ is $\cc(\kappa_s)$-converging on $\cc^m(K_s)$ for $s=1,2$, $m\geq \overline{m}$, then $\mathcal{N}^1\ntimes \mathcal{N}^2=(\Pi_0, \Pi_1, \dots)$ is $\cc(\kappa)$-converging on $\cc^\mathbf{m}(K)$ provided that

\begin{equation}\label{eq:condonpconv}
\mathbf{m}> 3n/2+4m+3/2.
\end{equation}
\end{corollary}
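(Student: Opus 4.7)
The plan is to derive the corollary directly from Theorem \ref{th:convdiffonc} by specialising its three structural ingredients (SBM exponent, Markov exponent, Jackson property) to the setting of convex bodies. Since all the analytic work has been done in the theorem, the task is essentially one of bookkeeping the constants that become available in the convex case.

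First, I would produce a strong Bernstein--Markov measure on each factor. By the bounds in \cite{zer2} quoted in the paragraph preceding Lemma \ref{th:prodsbm}, when $K_s$ is a convex body in $\RR^{n_s}$ the normalised Lebesgue measure on $K_s$ is a $\theta_s$-SBM measure with
\[\theta_s=\tfrac{3n_s}{2}.\]
This supplies hypothesis (1) of Theorem \ref{th:convdiffonc}. Second, I would invoke Wilhelmsen's theorem \cite{wilhelmsen}, recalled immediately after the Markov inequality \eqref{eq:markovineq}, which tells us that a convex body in $\RR^{n_s}$ satisfies a Markov inequality with exponent $r_s=2$, independently of the dimension $n_s$. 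This gives hypothesis (2).

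Third, I need $K=K_1\times K_2$ to be a Jackson compact set. The Cartesian product of two convex bodies is again a convex body, in particular quasi-convex (property P of Whitney), so the Whitney--Bagby argument recalled around \eqref{eq:simappr} ensures that $K$ is a Jackson compact set. With these three inputs, Theorem \ref{th:convdiffonc} applies; its condition \eqref{eq:condonp} reads
\[\mathbf{m}>\tfrac{3}{2}+(\theta_1+\theta_2)+m(r_1+r_2)=\tfrac{3}{2}+\tfrac{3(n_1+n_2)}{2}+4m=\tfrac{3n}{2}+4m+\tfrac{3}{2},\]
which is precisely \eqref{eq:condonpconv}.

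There is no genuine obstacle: the only point requiring a moment's care is that the compact subset $\kappa_s\subset K_s$ appearing in the theorem is carried over implicitly through the notation $\kappa=\kappa_1\times\kappa_2$. Everything else is a direct substitution of the convex-body exponents $\theta_s=3n_s/2$ and $r_s=2$ into the already-established criterion of Theorem \ref{th:convdiffonc}.
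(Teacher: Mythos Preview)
Your proposal is correct and follows exactly the approach of the paper, which simply notes in one sentence that convex bodies satisfy all the hypotheses of Theorem \ref{th:convdiffonc} with $\theta_s=3n_s/2$ and $r_s=2$. Your write-up merely spells out the references for each of these facts and for the Jackson property of the product, all of which are already recalled in the text preceding the theorem.
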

\begin{proof} Convex bodies satisfies all the requirements with $\theta_s=3n_s/2$ and $r_s=2$. 
\end{proof}

We will apply the result in the following form. 

\begin{corollary}\label{th:convdiffoncconvex2} For $s=1,2$, we let $\kappa_s$ denote a convex body  in  $\RR^{n_s}$. We set $\kappa=\kappa_1\times \kappa_2$. 
\par 
Let $\mathcal{N}^s=(\Pi_0^s, \Pi_1^s, \dots)$ denote a Newton sequence on $\cc^{\overline{m}}(\RR^n)$,  $s=1,2$. We assume that all the projectors as well as those of $\mathcal{N}^1\ntimes \mathcal{N}^2=(\Pi_0, \Pi_1, \dots)$ extend continuously to, respectively, $\cc^{\overline{m}}(\kappa_s)$ and $\cc^{\overline{m}}(\kappa)$.
\par
If $\mathcal{N}^s$ is $\cc(\kappa_s)$-converging on the space of function $m$ times continuous differentiable on a neighborhood of $\kappa_s$ for $s=1,2$, $m\geq \overline{m}$, then $\mathcal{N}^1\ntimes \mathcal{N}^2=(\Pi_0, \Pi_1, \dots)$ is $\cc(\kappa)$-converging on the space of all $\mathbf{m}$ differentiable functions on a neighborhood of $\kappa$ provided that 
\begin{equation}\label{eq:condonpconv2}
\mathbf{m}> 3n/2+4m+3/2.
\end{equation}
\end{corollary}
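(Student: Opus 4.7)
The plan is to reduce this result to Corollary \ref{th:convdiffoncconvex} by a simple thickening argument, since the only difference between the two statements is that functions are now only assumed smooth on an open neighborhood of $\kappa$ rather than on a specified product compact. Given an arbitrary function $f$ that is $\cc^{\mathbf{m}}$ on some open neighborhood $U$ of $\kappa=\kappa_1\times\kappa_2$, I would first choose convex bodies $K_1\subset\RR^{n_1}$ and $K_2\subset\RR^{n_2}$ with $\kappa_s\subset\mathrm{int}(K_s)$ for $s=1,2$ and $K_1\times K_2\subset U$. Since each $\kappa_s$ is compact and $U$ open, such thickenings exist (for instance, take $K_s$ to be a small Minkowski dilation of $\kappa_s$, using that the product of interiors equals the interior of the product in finite-dimensional spaces). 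Then $f$ lies in $\cc^{\mathbf{m}}(K)$ for $K=K_1\times K_2$, and the geometric hypotheses of Corollary \ref{th:convdiffoncconvex} are met.

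Next I would verify that the projector and convergence hypotheses transfer to this $K_s$ setting. The continuity of each $\Pi^s_d$ on $\cc^{\overline{m}}(K_s)$ follows by composing the obvious continuous restriction map $\cc^{\overline{m}}(K_s)\to\cc^{\overline{m}}(\kappa_s)$ with the assumed continuous extension $\Pi^s_d:\cc^{\overline{m}}(\kappa_s)\to\pol_d(\RR^{n_s})$; since the image is polynomial and $\Pi^s_d$ is the identity on $\pol_d$, the composite remains a projector, and the same argument works for $\Pi_d$ on $\cc^{\overline{m}}(K)$. The $\cc(\kappa_s)$-convergence hypothesis for $\mathcal{N}^s$ on $\cc^m(K_s)$ also transfers directly: any $g\in\cc^m(K_s)$ is by definition $m$-times continuously differentiable on $\mathrm{int}(K_s)$, which is an open neighborhood of $\kappa_s$, so the assumed convergence on $m$-times differentiable functions on neighborhoods of $\kappa_s$ applies to $g$.

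Finally, since the numerical condition \eqref{eq:condonpconv2} is literally the same as \eqref{eq:condonpconv}, Corollary \ref{th:convdiffoncconvex} applied to $K=K_1\times K_2$ and $\kappa=\kappa_1\times\kappa_2$ gives that $\mathcal{N}^1\ntimes\mathcal{N}^2(f)$ converges to $f$ in $\cc(\kappa)$. As $f$ was arbitrary among functions smooth on a neighborhood of $\kappa$, this is precisely the claimed $\cc(\kappa)$-convergence. I do not anticipate any genuine obstacle: the argument is purely one of translating the hypotheses of the previous corollary into the slightly more intrinsic formulation used here, and the only real care needed is the bookkeeping of the three nested spaces $\cc^{\overline{m}}$, $\cc^m$, $\cc^{\mathbf{m}}$ and the observation that restriction to the thickened bodies loses no information, because the assumed extensions of the $\Pi^s_d$ are already available on $\cc^{\overline{m}}(\kappa_s)$.
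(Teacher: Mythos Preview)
Your proposal is correct and follows essentially the same approach as the paper: the paper's proof is the single sentence ``Apply the previous corollary with $K_s$, running in a basis of neighborhood of $\kappa$ (formed of convex bodies),'' which is precisely your thickening argument. Your version is more carefully written, in that you spell out why the projector continuity and the $\cc(\kappa_s)$-convergence hypotheses transfer to the chosen $K_s$, but the underlying idea is identical.
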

\begin{proof}  Apply the previous corollary with $K_s$, running in a basis of neighborhood of $\kappa$ (formed of convex bodies). 
\end{proof}
Let us point out the limitation of the above theorem and its corollaries. The assumption on $\mathbf{m}$ clearly depends on the method of proof. It seems natural to expect that the theorem holds with $\mathbf{m}=m$ and, if this is not true, it would ne interesting to explain the reason why the Newton product procedure induces a loss.  

\begin{proof}[Proof of Theorem \ref{th:convdiffonc}] We use the strategy described in Subsection \ref{sec:strategy} starting from the Fourier expansion \eqref{eq:conforcp} on $K_1\times K_2$ with $\mu=\mu_1\times \mu_2$ where $\mu_s$ is a $\theta_s$-SBM on
 $K_s$, so that $\mu$ is $\theta$-SBM on $K=K_1\times K_2$ in view of Lemma \ref{th:prodsbm}. Besides $K$ satisfies a Markow inequality of exponent $\max (r_1,r_2)$. Hence, since the assumption on $\mathbf{m}$ implies
\begin{equation} 
\mathbf{m}> 1 + (\theta_1+\theta_2)+ \overline{m}\max(r_1,r_2) , 
\end{equation}
in view of Lemma \ref{th:prodsbm}, we have
\begin{equation}
\label{eq:profccK}%
 f=\sum_{j=0}^\infty \sum_{|\alpha|+|\beta|=j} c_{\alpha\beta}(f) b_{\alpha,\beta},\quad \text{in ${\cc}^{\overline{m}}(K)$}.
\end{equation}
 As in the proof of Theorem \ref{th:converghK}, the assumption on the convergence together with the uniform boundedness principle on the Banach space $\cc^m(K_s)$ provide us with a positive constant $C_s$ such that (not confuse the projector $\Pi^s_d$ with its Newton summand $\pi_d^s$) :
\begin{equation}\label{eq:profccK2}
\|\pi^s_d(f_s)\|_\kappa \leq C_s \|f_s\|_{m,K_s}, \quad f_s\in {\cc}^{m}(K_s), \quad s=1,2. 
\end{equation}
Now, since the convergence in \eqref{eq:profccK} holds in ${\cc}^{\overline{m}}(K)$ on which $\Pi_d$ is continuous then we  may permute $\Pi_d$ with the sum in the series expansion to obtain (see \eqref{eq:profhK_2})
\begin{multline}\label{eq:profccK3}
f-\Pi_d(f) = 
\sum_{j=d+1}^\infty \sum_{|\alpha|+|\beta|=j}c_{\alpha\beta}(f)
\sum_{(i_1,i_2)\in B(d,\alpha,\beta)} \pi^1_{i_1}\big(b (\alpha, \mu_1,\cdot)
\big)\pi^2_{i_2} \big(b (\beta, \mu_2, \cdot)\big).
\end{multline}
At this point, we could continue the proof as in that of Theorem \ref{th:converghK}. Yet the term in \eqref{eq:profhK_5} which is innocuous in the case of holomorphic functions (because it is dominated by a geometric sequence) should be taken into account and would lead to a weaker estimate. To avoid this term, we will use a somewhat more tricky argument. 
\par
Let us denote by $R_j$ the $j$-term in \eqref{eq:profccK3}. Permuting the sums, we obtain
\begin{equation}\label{eq:proofccK11}
H_j= \sum_{i_2=0}^j \left(\sum_{i_1=d+1-i_2}^j \sum_{(\alpha,\beta)\in \mathbf{B}(i_1,i_2,j)} c_{\alpha\beta}(f) \pi^1_{i_1}\big(b (\alpha, \mu_1,\cdot) \pi^2_{i_2}\big(b (\beta, \mu_2,\cdot)\right),
\end{equation}
where
\[%
\mathbf{B}(i_1,i_2,j)=\big\{(\alpha,\beta)\in \NN^{n_1}\times \NN^{n_2} : |\alpha|\geq i_1, \; |\beta|\geq i_2, \; |\alpha|+ |\beta| =j\big\}.
\]%
Fix $z^1 \in K_1$. Let us concentrate on the term between brackets in \eqref{eq:proofccK11}. Since $\pi_{i_2}$ is linear, it can be seen as 
\[\pi_{i_2}(h), \quad h(\cdot)= \sum_{i_1=d+1-i_2}^j \sum_{(\alpha,\beta)\in \mathbf{B}(i_1,i_2,d)} c_{\alpha\beta}(f) \pi_{i_1}^1(b (\alpha, \mu_1, \cdot))(z^1) b (\beta, \mu_2, \cdot).\]
Observe that $h=h_{z^1}$ is a polynomial (in $z^2$) of degree $\leq j$ and, since the $b (\alpha, \mu_2, z^2)$ are orthonormal,  we have
\begin{equation}\label{eq:proofccK12}
\|h\|_{2}= \sqrt{\sum_{i_1=d+1-i_2}^j \sum_{(\alpha,\beta)\in \mathbf{B}(i_1,i_2,j)} c^2_{\alpha\beta}(f) \times \big(\pi^1_{i_1}(b (\alpha, \mu_1,\cdot) (z^1)\big)^2}.
\end{equation}
Now, 
\begin{align}
 \|\pi^2_{i_2}(h)\|_{\kappa_2} 
&\leq C_2 \|h\|_{m, K_2}, & \text{by \eqref{eq:profccK2}} \\
\label{eq:proofccK13} &\leq  C_2C_{\mu_2,K_2} j^{\theta_2+mr_2} \|h_{z^1}\|_{2}, & \text{by \eqref{eq:difstrongBM}, since $h\in \pol_j(\RR^{n_2})$}.
\end{align}
At this point, we have
\begin{equation}\label{eq:proofccK14}\max_{z^2\in \kappa_2} |H_j(z^2,z^1)|\leq \sum_{i_2=0}^j C_2 C_{\mu_2,K_2} j^{\theta_2+mr_2} \|h_{z^1}\|_{2}, \quad \text{all $z^1\in \kappa_1$.}\end{equation}
Yet, with the same reasoning as above, 
\begin{align} \left\|\pi^1_{i_1}\big(b (\alpha, \mu_1,\cdot)\big)\right\|_{\kappa_1} 
&\leq C_1 \|b (\alpha, \mu_1,\cdot)\|_{m, K_1}, & \text{by \eqref{eq:profccK2}} \\
&\leq  C_1 C_{\mu_1,K_1} |\alpha|^{\theta_1+mr_1} \|b (\alpha, \mu_1,\cdot)\|_{2}, & \text{by \eqref{eq:difstrongBM}}\\
&\leq   C_1 C_{\mu_1,K_1} j^{\theta_1+mr_1}, & \text{by normality.}
\end{align}
Using this estimate in the bound for $\|h_{z^1}\|_{2}$ in \eqref{eq:proofccK13}, we arrive at 
\[\max_{z^1\in \kappa_1} \|h_{z^1}\|_{2} \leq C_1 C_{\mu_1,K_1} j^{\theta_1+mr_1}
\big(\sum_{i_1=d+1-i_2}^j \sum_{(\alpha,\beta)\in \mathbf{B}(i_1,i_2,j)} c^2_{\alpha\beta}(f)\big)^{1/2}.\]
Returning to $H_j$ in \eqref{eq:proofccK14}, we now have 
\[\|H_j\|_\kappa \leq C_2C_{\mu_2,K_2}C_1 C_{\mu_1,K_1} j^{\theta_1+mr_1+\theta_2+mr_2}
\sum_{i_2=0}^j \Big(\sum_{i_1=d+1-i_2}^j \sum_{(\alpha,\beta)\in \mathbf{B}(i_1,i_2,j)} c^2_{\alpha\beta}(f)\Big)^{1/2}.\]
To deal with the right hand term, we use the concavity inequality for the square root which reads as
\[\sum_{i_2=0}^j \frac{\sqrt{\square_{i_2}}}{j+1} \leq \sqrt{\frac{1}{j+1} \sum_{i_2=0}^j \square_{i_2}}\]
which gives the first line of the following (where we shortens the notation for clarity)
\begin{align}
\sum_{i_2=0}^j \Big(\sum_{i_1=\dots}^j \sum_{(\alpha,\beta)\in\dots} c^2_{\alpha\beta}(f)\Big)^{1/2} &\leq \sqrt{j+1} \Big(\sum_{i_2=0}^j \sum_{i_1=\dots}^j \sum_{(\alpha,\beta)\in \dots} c^2_{\alpha\beta}(f)\Big)^{1/2}\\
&= \sqrt{j+1}\Big(\sum_{|\alpha|+|\beta|=j} c^2_{\alpha\beta}(f)\Big)^{1/2} \\
&=\sqrt{j+1}\|\pro_{j,\mu}(f)-\pro_{j-1,\mu}(f)\|_2\\
\intertext{and, in view of \eqref{eq:estcalphabetasmooth} taking into account that $K$ is Jackson and $f\in{\cc}^{\mathbf{m}}(K)$,}
& \leq C'\sqrt{j+1}j^{-\mathbf{m}} \leq C''j^{1/2-\mathbf{m}}. 
\end{align}
So the final estimate for $\|H_j\|_\kappa $ is
\[\|H_j\|_\kappa \leq C''C_2C_{\mu_2,K_2}C_1 C_{\mu_1,K_1} j^{1/2+\theta_1+mr_1+\theta_2+mr_2-\mathbf{m}}.\]
Since, see \eqref{eq:profccK3}, we have 
\[\|f-\Pi_d(f)\|_\kappa \leq \sum_{j=d+1}^\infty \|H_j\|_\kappa;\]
the uniform convergence follows as soon as $1/2+\theta_1+mr_1+\theta_2+mr_2-\mathbf{m}<-1$ which is the assumption on $\mathbf{m}$. 

\end{proof}
\begin{remark} Although we consider only $\cc(\kappa)$-convergence (uniform convergence on $K$), the proof works with slight modification for $\cc^{\overline{m}}$-convergence (assuming of course that such a convergence holds for the partial sequences). In fact, in \eqref{eq:profccK3}, we would have to estimate $\|D^\gamma f-D^\gamma\Pi_d(f)\|_\kappa$ for $|\gamma|\leq \overline{m}$ and we would just just have to compute the $D^\gamma$ derivative of the products $\pi^1_{i_1}\big(b (\alpha, \mu_1,\cdot)\big)\times \pi^2_{i_2} \big(b (\beta, \mu_2, \cdot)\big)$ with the help of the Leibniz formula. Clearly, this would only further increases the acceptable value for $\mathbf{m}$. In particular, the reasoning would lead to : 
\end{remark}

\begin{corollary}[To the proof of Theorem \ref{th:convdiffonc}] With the same assumptions of Theorem \ref{th:convdiffonc} on the compact sets.
If $\mathcal{N}^s$ is converging on $\cc^\infty(K_s)$ for $s=1,2$ then $\mathcal{N}^1\ntimes \mathcal{N}^2=(\Pi_0, \Pi_1, \dots)$ is converging on $\cc^\infty(K)$. 
\end{corollary}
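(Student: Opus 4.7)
The topology of $\cc^\infty(K)$ is generated by the seminorms $\|\cdot\|_{\overline{m},K}$ for $\overline{m}\in\NN$, making it a Fr\'echet space; similarly for $\cc^\infty(\kappa)$. Consequently convergence in the $\cc^\infty$ topology amounts to convergence in $\cc^{\overline{m}}$ for each $\overline{m}\in\NN$. The plan is therefore to fix $\overline{m}\in\NN$ and $f\in \cc^\infty(K)$ and to replay the proof of Theorem \ref{th:convdiffonc} along the lines indicated in the preceding remark, the decisive point being that $f\in \cc^{\mathbf{m}}(K)$ for arbitrarily large $\mathbf{m}$.

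First, since each $\mathcal{N}^s$ converges on the Fr\'echet space $\cc^\infty(K_s)$, the Banach-Steinhaus theorem for Fr\'echet spaces produces, for the chosen $\overline{m}$, an integer $m=m(\overline{m})\geq \overline{m}$ and constants $C_s$ with
\[
\|\pi^s_d(g_s)\|_{\overline{m},\kappa_s}\leq C_s\,\|g_s\|_{m,K_s},\qquad g_s\in\cc^\infty(K_s),\; s=1,2,
\]
which replaces the bound \eqref{eq:profccK2}. Next, choosing a product SBM measure $\mu_1\times \mu_2$ on $K$ and applying Lemma \ref{th:prodsbm} (valid for any $\mathbf{m}$ satisfying the lemma's condition with $\overline{m}$) gives the Fourier expansion $f=\sum_j H_j$ in $\cc^{\overline{m}}(K)$, so that the identity \eqref{eq:profccK3} for $f-\Pi_d(f)$ persists.

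I would then estimate $\|H_j\|_{\overline{m},\kappa}$ by applying $D^\gamma$ with $|\gamma|\leq \overline{m}$ inside $H_j$, expanding each product $\pi^1_{i_1}(b_\alpha)\,\pi^2_{i_2}(b_\beta)$ with the Leibniz formula, and bounding every resulting factor by combining the Banach-Steinhaus inequality above with the differential SBM inequality \eqref{eq:difstrongBM} applied to the orthonormal polynomial $b_\alpha$ (of degree at most $j$). The concavity/square-root averaging device that takes the proof of Theorem \ref{th:convdiffonc} from \eqref{eq:proofccK14} to its conclusion, combined with the coefficient bound \eqref{eq:estcalphabetasmooth}, then yields an estimate of the form
\[
\|H_j\|_{\overline{m},\kappa}\leq \Gamma\, j^{\sigma-\mathbf{m}},
\]
with $\sigma=\sigma(\overline{m},m,\theta_1,\theta_2,r_1,r_2,n)$ an explicit polynomial exponent that is \emph{independent} of $\mathbf{m}$.

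Because $f\in\cc^\infty(K)$, I may choose $\mathbf{m}$ so that $\sigma-\mathbf{m}<-1$; the series $\sum_{j\geq d+1}\|H_j\|_{\overline{m},\kappa}$ then converges, and consequently $\Pi_d(f)\to f$ in $\cc^{\overline{m}}(\kappa)$. Since $\overline{m}$ was arbitrary, convergence holds in $\cc^\infty(\kappa)$. The argument is largely a bookkeeping exercise; the one non-routine observation is that Banach-Steinhaus must be applied on the Fr\'echet space $\cc^\infty(K_s)$, so that the intermediate regularity $m$ is allowed to depend on $\overline{m}$. The main potential obstacle, namely tracking how the Leibniz expansion and the elevated Markov/SBM exponents inflate $\sigma$, poses no real difficulty since $\mathbf{m}$ may always be enlarged without constraint.
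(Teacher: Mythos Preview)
Your proposal is correct and follows precisely the approach sketched in the paper's remark preceding the corollary: redo the proof of Theorem \ref{th:convdiffonc} for a fixed $\overline{m}$, expand the derivatives of the products $\pi^1_{i_1}(b_\alpha)\,\pi^2_{i_2}(b_\beta)$ via Leibniz, absorb the resulting increase of the polynomial exponent into an enlarged $\mathbf{m}$, which is harmless since $f\in\cc^\infty(K)$. Your only addition is the explicit observation that Banach--Steinhaus must be invoked on the Fr\'echet space $\cc^\infty(K_s)$ so that the intermediate regularity $m$ may depend on $\overline{m}$; the paper leaves this implicit.
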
  
\subsection{Examples}\label{secexadiff}
\begin{asparaenum}[(A)]
\item  We turn to the projector considered in Theorem \ref{th:exkerlejalagrleja} taking a Leja sequence for $a^s_d$ and a $\Re$-Leja sequence for $b_d$ as is illustrated in Table \ref{tab:pointsLeja}.
\begin{theorem} For all function $f$ $22$-times continuously differentiable on a neighborhood of $K=D(0,1)\times [-1,1]$, the polynomial \((\kg[a_0,\dots,a_d] \ntimes (\lag[b_0,\dots,b_d])(f)\) converges uniformly to $f$ on $K$.  
\end{theorem}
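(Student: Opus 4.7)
The plan is to obtain the theorem as an immediate application of Corollary \ref{th:convdiffoncconvex2}. Set $\kappa_1=\overline{D(0,1)}\subset\RR^2$ (so $n_1=2$) and $\kappa_2=[-1,1]\subset\RR$ (so $n_2=1$); both are convex bodies, $\kappa:=\kappa_1\times\kappa_2=K$, and $n=n_1+n_2=3$. The two divisor Newton sequences are
\[
\mathcal{N}^1=(\kg[a_0,\dots,a_d])_{d\in\NN},\qquad \mathcal{N}^2=(\lag[b_0,\dots,b_d])_{d\in\NN},
\]
each of which really is a Newton sequence in the sense of \eqref{eq:newstruwithN0}, since the interpolation conditions at $a_0,\dots,a_d$ (resp.\ $b_0,\dots,b_d$) are contained in those at $a_0,\dots,a_{d+1}$ (resp.\ $b_0,\dots,b_{d+1}$). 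With $n=3$, the sufficient condition \eqref{eq:condonpconv2} becomes $\mathbf{m}>\tfrac{3n}{2}+4m+\tfrac{3}{2}=4m+6$; in particular $\mathbf{m}=22$ accommodates the choice $m=3$, since $22>4\cdot 3+6=18$.

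The bulk of the work is verifying the hypothesis of the corollary at this level of smoothness, namely that each $\mathcal{N}^s$ is $\cc(\kappa_s)$-converging on the space of $\cc^3$-functions on a neighborhood of $\kappa_s$. For $\mathcal{N}^2$, this is the classical univariate story: the Lebesgue constant of Lagrange interpolation at the $\Re$-Leja sequence on $[-1,1]$ grows only polynomially in $d$ with a small exponent $\alpha<2$ (see \cite{calmanh2}), and the Jackson–Lebesgue inequality
\[
\|g-\lag[b_0,\dots,b_d](g)\|_{[-1,1]}\leq (1+\Lambda_d)\,\mathrm{dist}\bigl(g,\pol_d(\RR)\bigr)
\]
combined with $\mathrm{dist}(g,\pol_d(\RR))=O(d^{-3})$ for $g\in\cc^3([-1,1])$ forces uniform convergence. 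For $\mathcal{N}^1$, one uses the analogous bound for the norm of the Kergin projector at Leja points of the unit disk (see \cite{biacal} and \cite[Section 6]{bercal1}), together with the near-best simultaneous approximation estimate \eqref{eq:simappr} for the convex body $\overline{D(0,1)}$, to derive $\cc(\kappa_1)$-convergence on $\cc^3$ of a neighborhood of $\kappa_1$.

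Once these two univariate convergence facts are in hand, Corollary \ref{th:convdiffoncconvex2} applies directly with $m=\overline{m}=3$ and $\mathbf{m}=22$, yielding $\cc(\kappa)$-convergence of $\mathcal{N}^1\ntimes\mathcal{N}^2=(\kg[a_0,\dots,a_d])\ntimes(\lag[b_0,\dots,b_d])$ on every function $22$ times continuously differentiable on a neighborhood of $K$. The main obstacle is not the general machinery — that was already built in the previous sections — but the quantitative input of Step~1: one must cite or establish polynomial-in-$d$ bounds for the Lebesgue constants of Kergin interpolation at Leja points on the disk and of Lagrange interpolation at $\Re$-Leja points on $[-1,1]$, whose exponents are small enough that Jackson's theorem on convex bodies provides $\cc^3$-convergence. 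Everything else is automatic from Corollary \ref{th:convdiffoncconvex2}.
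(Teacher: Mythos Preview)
Your overall strategy --- apply Corollary~\ref{th:convdiffoncconvex2} with $\kappa_1=\overline{D(0,1)}$, $\kappa_2=[-1,1]$, $n=3$ --- is exactly what the paper does. The gap is in your quantitative input: the choice $m=3$ is not supported by the results you cite, and in fact the paper needs $m=4$.

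For the Lagrange divisor, the bound actually established in \cite{calmanh2} (and quoted in the paper's proof) is that the Lebesgue constant of $\lag[b_0,\dots,b_d]$ at $\Re$-Leja points grows at most like $d^3\ln d$, not like $d^\alpha$ with $\alpha<2$. Jackson's inequality then gives an error $O(d^{3}\ln d\cdot d^{-m})$, which forces $m\geq 4$, not $m=3$. For the Kergin divisor, the references you invoke (\cite{biacal} and \cite[Section~6]{bercal1}) concern the structure of Leja sequences and the algebra of the Newton product, not growth bounds for the Kergin operator norm on $\cc^m(\overline{D(0,1)})$; the relevant source is \cite{manh} (Theorems~3.3 and~4.3), and there again one obtains $\cc(\kappa_1)$-convergence only for $m=4$. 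With $m=4$ the threshold in \eqref{eq:condonpconv2} becomes $\tfrac{3\cdot3}{2}+4\cdot4+\tfrac{3}{2}=22$, which is precisely how the paper arrives at the number $22$; your computation $4m+6=18$ with $m=3$ would, if correct, give a stronger theorem than the one stated. A secondary point: the paper takes $\overline{m}=1$ (Kergin at points in the disk extends continuously to $\cc^1$), not $\overline{m}=3$.
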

\begin{proof} From \cite[Theorem 3.3 and Theorem 4.3]{manh}, $\kg[a_0,\dots,a_d]$ satisfies the assumption of Corollary \ref{th:convdiffoncconvex2} with $\kappa_1=D(0,1)$, $\overline{m}=1$ and $m=4$. On the other hand, according to \cite[Theorem 3.1]{calmanh2}, the Lebesgue constant for $\lag[b_0,\dots,b_d]$ grows at most like $d^3\ln d$ so that convergence holds for four times continuously differentiable function and we may also take $m=4$ (whereas any $\overline{m}\geq 0$ works. Thus Corollary \ref{th:convdiffoncconvex2} ensures convergence for $\mathbf{m}=22= 3\times 3/2+4\times 4 + 3/2$.  
\end{proof}
\item When we work with $\kg[a_0,\dots,a_d] \ntimes \kg[a_0,\dots,a_d]$ for functions on $D(0,1)^2$ in $\RR^4$, the level of required differentiability for applying our result will be $24$. 
\item If we substitute the Kergin interpolants by another related projector known as Hakopian interpolants, in the above, see \cite[Theorem 4.5]{manh}, the level will be $28$. 
\item It is obvious that, in these examples, the levels of differentiability we obtain are rough. In these cases, the available algebraic formulas are very rich and one should be able to derive \textsl{ad hoc} error formulas leading to better results. We hope that our result will be an incentive to further research in this direction. 
\end{asparaenum}

\bibliographystyle{acm}
\bibliography{bibnewtonproduct}
\end{document}